\newif\ifdraft
\let\ifdraft\iffalse %
    \def\cl@chapter{\@elt {theorem}}
\colorlet{MyBlue}{DodgerBlue!75!Black}
\colorlet{MyGreen}{DarkGreen!85!Black}
\numberwithin{equation}{section}						%
    \theoremstyle{plain}
    \newtheorem{theorem}{Theorem}						%
    \newtheorem*{theorem*}{Theorem}						%
    \newtheorem{corollary}[theorem]{Corollary}					%
    \newtheorem*{corollary*}{Corollary}						%
    \newtheorem{lemma}[theorem]{Lemma}					%
    \newtheorem{proposition}[theorem]{Proposition}				%
    \theoremstyle{definition}
    \newtheorem{definition}[theorem]{Definition}				%
    \newtheorem*{definition*}{Definition}					%
    \newtheorem{assumption}{Assumption}					%
    \newtheorem*{assumption*}{Assumptions}					%
    \theoremstyle{remark}
    \newtheorem*{remark*}{Remark}						%
    \newtheorem{example}{Example}						%
    \newtheorem*{example*}{Example}						%
    \newtheorem{assumption}{Assumption}
\numberwithin{equation}{section}						%
\numberwithin{theorem}{section}						%
\numberwithin{lemma}{section}						%
\numberwithin{proposition}{section}						%
\numberwithin{definition}{section}						%
\numberwithin{corollary}{section}						%
\numberwithin{remark}{section}						%
\numberwithin{example}{section}						%
\def\addlegendimage{\csname pgfplots@addlegendimage\endcsname}
\newcommand{\revise}[1]{#1}		%
\newcommand{\revisebis}[1]{#1}		%
\DeclareMathOperator*{\ri}{ri}
\newcommand{\RR}{\mathbb{R}}
\newcommand{\bbR}{\mathbb{R}}         %
\newcommand{\B}{\mathcal{B}}
\newcommand{\N}{\mathcal{N}}
\providecommand{\U}{U}
\renewcommand{\U}{\mathcal{U}} %
\DeclareMathOperator*{\dist}{dist}
\newcommand{\rank}{\operatorname{rank}}
\DeclareMathOperator{\Tr}{Tr}
\DeclareMathOperator{\sign}{sign}
\ifdef{\C}{                          %
    \renewcommand{\C}{\mathcal{C}}
}{
    \newcommand{\C}{\mathcal{C}}
}
\newcommand{\funman}{F} %
\newcommand{\funns}{g}  %
\newcommand{\funtot}{F}     %
\newcommand{\funspart}{f}   %
\newcommand{\funnspart}{g}  %
\DeclareMathOperator*{\argmin}{argmin}
\newcommand{\prox}{\mathbf{prox}}
\newcommand{\PGop}{\mathsf{T}} %
\newcommand{\manupdate}{\mathrm{ManAcc}}
\ifdef{\M}{                          %
    \renewcommand{\M}{\mathcal{M}}
}{
    \newcommand{\M}{\mathcal{M}}
}
\ifdef{\B}{                          %
    \renewcommand{\B}{\mathcal{B}}
}{
    \newcommand{\B}{\mathcal{B}}
}
\newcommand{\proj}{\mathrm{proj}}
\newcommand{\distM}[1][]{%
  \ifthenelse{\isempty{#1}}{{\dist}_{\M}}{{\dist}_{#1}}
}
\newcommand{\smoothcurve}{c}
\newcommand{\geocurve}{\gamma}
\newcommand{\grad}{\operatorname{grad}}
\newcommand{\Hess}{\operatorname{Hess}}
\newcommand{\R}{\operatorname{R}}           %
\newcommand{\D}{\operatorname{D}}           %
\newcommand{\tangentBundle}{T \mathcal B}
\newcommand{\tangent}[2]{T_{#1} #2}
\newcommand{\tangentM}[1][]{%
  \ifthenelse{\isempty{#1}}{T_x \M}{T_{#1} \M}%
}
\newcommand{\tangentMstar}[1][]{%
  \ifthenelse{\isempty{#1}}{T_x \M^\star}{T_{#1} \M^\star}%
}
\newcommand{\normalM}[1][]{%
  \ifthenelse{\isempty{#1}}{N_x \M}{N_{#1} \M}%
}
\begin{document}

\title{Newton acceleration on manifolds identified by proximal gradient methods\footnote{This work is partly funded %
by the ANR JCJC project \emph{STROLL} (ANR-19-CE23-0008).}}

\ifdraft

    \author[G.~Bareilles]{Gilles Bareilles$^{\ast}$}
    \email{gilles.bareilles@univ-grenoble-alpes.fr}

    \author[F.~Iutzeler]{Franck Iutzeler$^{\ast}$}
    \email{franck.iutzeler@univ-grenoble-alpes.fr}

    \author[J.~Malick]{\\J\'er\^ome Malick$^{\diamond}$}
    \email{jerome.malick@univ-grenoble-alpes.fr}

    \address{$^{\ast}$  Univ. Grenoble Alpes, LJK, 38000 Grenoble, France}
    \address{$^{\diamond}$ Univ. Grenoble Alpes, CNRS, LJK, 38000 Grenoble, France}

\else
    \author{Gilles Bareilles, Franck Iutzeler, J\'er\^ome Malick}
    \titlerunning{Newton acceleration of the proximal gradient methods}
    \institute{G. Bareilles \and F. Iutzeler \and %
                J. Malick \at Univ. Grenoble Alpes, CNRS, Grenoble INP, LJK, 38000 Grenoble, France\\
                \email{firstname.lastname@univ-grenoble-alpes.fr}   \\
    }

    \date{Received: date / Accepted: date}
\fi

\maketitle
\begin{abstract}
    Proximal methods are known to identify the underlying substructure of nonsmooth optimization problems. Even more, in many interesting situations, the output of a proximity operator comes with its %
    structure {at} no additional cost, and convergence is improved once \revise{it matches the structure of a minimizer.}
    However, it is impossible in general to know whether the current structure is final or not; such highly valuable information has to be exploited adaptively. To do so, we place ourselves in the case where a proximal gradient method can identify manifolds of differentiability of the nonsmooth objective. Leveraging this manifold identification, we show that Riemannian Newton-like methods can be intertwined with the proximal gradient steps to drastically boost the convergence. We prove the superlinear convergence of the algorithm when solving some nondegenerated nonsmooth nonconvex optimization problems. We provide numerical illustrations on optimization problems regularized by $\ell_1$-norm or trace-norm.
    \keywords{Nonsmooth optimization \and Riemannian optimization \and Proximal Gradient \and  Identification \and Partial Smoothness \and Sparsity-inducing regularization}
\end{abstract}
\section{Introduction}

Nonsmoothness naturally appears in various applications of optimization, e.g. in decomposition methods in operations research \cite{briant-lemarechal-etal-2008} %
or in sparsity-inducing regularization techniques in data analysis\;\cite{bach2012optimization}.
In these applications, 
the \emph{nonsmooth} objective functions usually present a \emph{smooth} substructure,
which involves smooth submanifolds on which
the functions are locally smooth. To fix ideas, consider the simple example of the $\ell_1$ norm: %
though nonsmooth, it is obviously smooth around any point when restricted to the vector space of points with the same support.

Exploiting the underlying smooth substructure of objective functions to develop second-order methods has been a subject of fruitful %
research in nonsmooth optimization, pioneered by the developments around $\mathcal{U}$-Newton algorithms \cite{lemarechal2000u} and the notion of partial smoothness \cite{lewis2002active}. Let us mention the $\mathcal{UV}$-Newton bundle method of\;\cite{mifflin2005algorithm},
and the recent $k$-bundle Newton method of\;\cite{lewis2019simple}.
Interestingly, these Newton-type methods for nonsmooth optimization are connected to the standard Newton methods of nonlinear programming (SQP) and to the Newton methods of Riemannian optimization; see \cite{miller2005newton}. 

In this paper, we focus on a special situation where the smooth substructure
can be exploited numerically.
We consider the nonsmooth optimization problem
\begin{equation}\tag{$\mathcal P$} \label{pb:compositepb}
    \min_{x\in\mathbb R^n} F(x) \triangleq f(x)+g(x)
\end{equation}
where $f$ is a smooth differentiable function, and $g$ is not everywhere differentiable -- but admits a \emph{simple} proximal operator. More precisely, we assume that the proximal operator of $g$ outputs an explicit expression of the proximal point together with a representation of the current active submanifold. Coming back to the example of the $\ell_1$-norm: its proximity operator %
puts exactly to $0$ some coordinates of the input vector after a comparison test; hence, the output has some sparsity structure, which is known as a byproduct of the computation. More generally, this situation covers a large class of 
applications, where $g$ is used to enforce some prior structure such as sparsity of vectors (when $g$ is one of the $\ell_1,\ell_{0.5},\ell_0$-norms) or low rank of matrices (when $g$ is the nuclear norm); see e.g.\;\cite{bach2012optimization}.

Since $g$ has a simple proximal operator, first-order methods %
to minimize $F$ are the (accelerated) proximal gradient algorithms\revisebis{; see \cite[Chap.~10]{beck2017first} for a general reviews of these methods and their analysis}. %
Interestingly, in nondegenerate cases, the iterates produced by these algorithms eventually reach the optimal submanifold (ie. the manifold which contains the minimizer): it is the so-called \emph{identification} property of proximal algorithms, extensively studied in %
general settings; we refer to \cite{burke1988identification}, \cite{wright1993identifiable}, \cite{drusvyatskiy2014optimality}, or \cite{lewis2018partial}. For $\ell_1$-norm regularization, this means that after a finite but unknown number of iterations the algorithm ``identifies'' the final set of non-zero variables; see the pedagogical paper\;\cite{iutzeler2020nonsmoothness} for further discussions.

In the ideal case where we know that the iterates are %
on the optimal manifold, one could switch to a more sophisticated method, e.g.\,updating parameters of first-order methods as in\;\cite{liang2017activity}, considering Riemannian Newton methods as in\;\cite{daniilidis2006geometrical}, \revise{or other second-order schemes as in\;\cite{lewisProximalMethodComposite2016,lee2020accelerating}}. Unfortunately, even though we know the current structure of the iterates %
and we know that they will identify the optimal manifold in finite time, we \emph{never} know if the current manifold is the optimal one. %

We propose here a Newton acceleration\footnote{\revisebis{We choose the term ``Newton acceleration'' to emphasize the similarity with the celebrated Nesterov acceleration \cite{nesterov1983method}. Indeed both methods add an acceleration step after the proximal gradient iteration. But, unlike Nesterov's method where the acceleration is provided by an inertial step, the Newton acceleration comes from a second-order step on a smooth substructure, as we detail in this paper.}} of the proximal gradient algorithm solving the nonsmooth optimization problem\;\eqref{pb:compositepb}, that adaptively uses identification. 
\revisebis{ 
Our algorithm uses the same basic ingredients that work behind the scenes for existing nonsmooth second-order algorithms (e.g. \cite{mifflin2005algorithm} and \cite{lewis2019simple}):
(i) nonsmooth structure identification 
and (ii) efficient Newton-type methods to benefit from faster convergence along this structure. 
However, we rely on the explicit proximity operator of $g$ to benefit from exact structure identification, contrary to the approximate identification of the above methods. In addition, we perform Riemannian Newton steps on the identified manifold, while previous methods do not leverage its tractable Riemannian nature. 
We present a convergence analysis showing superlinear convergence of the resulting algorithm under some qualification assumptions -- but without prior knowledge on the final optimal submanifold. 
}
Finally, we provide numerical illustrations showing the interests of the proposed Newton acceleration on typical structure-inducing regularized problems (sparse logistic regression and low-rank least-squares).
Along the way, our study reveals results that have some interest on their own, in particular: we \revise{refine} the smoothness properties of the proximal gradient operator around structured critical points; we formalize complementary properties on \revise{line searches} in Riemannian optimization; \revise{we also bring a careful attention to the technical details induced by nonconvexity.}

\revisebis{Let us finally note that the Newton acceleration of the proximal gradient that we propose here should not be confused with proximal-Newton schemes such as \cite{lee2014proximal,becker2019quasinewton,aravkin2022proximal}. These methods essentially replace the gradient step by a (quasi-)Newton step before applying a proximity operator. Hence, they do not explicitly use the second order information of the function $g$ brought by its nonsmooth structure, which is instrumental in our developments.}

The paper is organized as follows.
First, in \cref{sec:prelim} we recall the useful notions of Riemannian optimization and variational analysis. %
Then, we introduce in \cref{sec:algo} our template algorithm alternating a proximal gradient step with a Riemannian update on the identified manifold. 
In \cref{sec:specificationsManUp}, we specify the implementation of efficient Riemannian Newton-type methods and illustrate their performances in \cref{sec:num}. The paper also contains three appendices with material
 used in our proofs;
some of these %
results are well-known and just recalled here, but several others seem to be less-known or not precisely treated in the %
literature. %

\section{Preliminaries: definitions, recalls, and examples}
\label{sec:prelim}

In this section, we introduce the notions
which will be central in our developments.
Our notation and terminology follow closely those of the monographs \cite{absil2009optimization} for Riemannian optimization
and \cite{rockafellar2009variational} for nonsmooth optimization. This section can be skipped by readers familiar with these topics.

\vspace*{-2ex}
\subsection{Recalls on Riemannian optimization}\label{sec:RiemannianRecalls}

We briefly introduce below the tools of Riemannian optimization used in this paper. We refer the reader to \cite{absil2009optimization} and \cite{boumal2022intromanifolds} for more extensive presentations.
\revise{In the rest of the paper, $\M$ denotes a submanifold of $\bbR^{n}$ or $\bbR^{m \times n}$.}

\medskip
\noindent\textbf{Submanifolds.}
A subset $\M$ of $\RR^n$ is said to be a \emph{$p$-dimensional $\C^2$-submanifold} of $\RR^n$ around $\bar{x} \in \M$ if there exists %
$\C^2$ function $\varphi : \RR^p \to \RR^n$ such that $\varphi$ \revise{maps} a neighborhood of $0 \in \RR^p$ \revise{to} a neighborhood of $\bar{x} \in\M$, \revise{that admits a smooth (local) inverse,} and \revise{which derivative} at $\varphi^{-1}(\bar{x})=0$ is injective.
A $p$-dimensional $\C^2$-submanifold of $\RR^n$ can alternatively be defined via a local equation, that is, a $\C^2$ function $\Phi : \RR^n \to \RR^{n-p}$ with a surjective derivative at $\bar x \in \M$ that satisfies for all $x$ close enough to $\bar x$: $x \in\M \Leftrightarrow \Phi(x) = 0$.

A basic tool to investigate approximations on manifolds is notion of the \emph{smooth curves}. A smooth curve on $\M$ is a $\C^2$ application $\gamma : I\subset \RR \to \M\subset \RR^n$, where $I$ is an open interval containing $0$. At each point $x\in\M$, the \emph{tangent space}, noted $\tangentM$, can be defined as the velocities of all smooth curves passing by $x$ at $0$:
\begin{equation*}
    \tangentM \triangleq \left\{ \smoothcurve'(0) ~|~ \smoothcurve : I \to \M \text{ is a smooth curve around $0$ and } \smoothcurve(0)=x \right\}.
\end{equation*}
The tangent space is a $p$-dimensional %
space containing \emph{tangent vectors}. Each tangent space $\tangentM$ is equipped with a scalar product $\langle\cdot, \cdot\rangle_x : \tangentM\times\tangentM\to\RR$, and the associated norm $\|\cdot\|_x$. In many cases, the tangent metric varies smoothly with $x$, making the manifold \emph{Riemannian}. In this paper, we use the ambient space scalar product to define the scalar product on tangent spaces; we will thus drop the subscript in the tangent scalar product and norm notations when there is no confusion possible. Related to the tangent space, we will also consider the \emph{normal space} $\normalM$ at $x\in\M$, defined as the orthogonal space to $\tangentM $ in $\RR^n$, and the \emph{tangent bundle manifold} defined by:
\[
\tangentBundle \triangleq \bigcup_{x\in\M}(x,\tangentM).
\]
Note also that both tangent and normal spaces at $x\in\M$  admit explicit expressions from derivatives of local parametrization $\varphi$ or local equations $\Phi$ defining $\M$:
\begin{equation*}
    \tangentM = \text{Im}~ \D_\varphi (0) = \text{Ker}~ \D_\Phi (x) \qquad \normalM = \text{Ker}~ \D_\varphi (0)^* = \text{Im}~ \D_\Phi (x)^*
\end{equation*}

A \emph{metric} on $\M$ can be defined as the minimal length over all curves joining two points $x, y\in\M$, ie. $\distM(x, y) = \inf_{\smoothcurve \in C_{x,y}}  \int_0^1 \|\smoothcurve'(t)\|_{\smoothcurve(t)} \mathrm{d}t$, where $C_{x,y}$ is the set of $[0, 1]\to\M$ smooth curves $\smoothcurve$ such that $\smoothcurve(0)=x$, $\smoothcurve(1)=y$. The minimizing curves generalize the notion of straight line between two points to manifolds. The constant speed parametrization of any minimizing curve is called a \emph{geodesic}.

\medskip
\noindent\textbf{Riemannian Gradients and Hessian.}
Let $\funman:\M\to\RR$, the \emph{Riemannian differential} of $\funman$ at\;$x$ is the linear operator $\D \funman(x):\tangentM\to\RR$ defined by $\D \funman(x)[\eta]\triangleq\left. \frac{\mathrm{d}}{\mathrm{d}t}\funman\circ \smoothcurve(t) \right|_{t=0}$, where $\smoothcurve$ is a smooth curve such that $\smoothcurve(0)=x$ and $\smoothcurve'(0)=\eta$. In turn, the \emph{Riemannian gradient} $\grad \funman(x)$ is the unique vector of $\tangentM$ such that, for any tangent vector\;$\eta$, $\D \funman(x)[\eta] = \langle \grad \funman(x), \eta \rangle$.
If $\grad \funman(x)$ exists, a first order Taylor development can be formulated. Let $x\in\M$, $\eta\in\tangentM$ and $\smoothcurve$ denote a smooth curve passing by $x$, with velocity $\eta$ at $0$; then, for $t$ near $0$,
\begin{equation*}\label{eq:manifoldFirstOrderDevSmoothCurve}
    \funman\circ \smoothcurve(t) = \funman(x) + t \langle \grad \funman(x), \eta \rangle + o(t).
\end{equation*}

Notions of derivation for vector fields and of acceleration for curves are used to define second-order objects. Let a curve $\smoothcurve:I\to\M$ and a smooth vector field $Z$ on $\smoothcurve$, ie.\;a smooth map such that $Z(t)\in\tangentM[c(t)]$ for $t\in I$. The \emph{covariant derivative} of $Z$ on the curve $\smoothcurve$, denoted $\frac{\D}{\mathrm{d}t}Z:I\to\tangentBundle$, is defined by $\frac{\D}{\mathrm{d}t}Z(t) \triangleq \proj_{\smoothcurve(t)} Z'(t)$, where $Z'(t)$ denotes the derivative in the ambient space $\bbR^n$ and $\proj_{x}$ corresponds to the orthogonal projector from $\RR^n$ to $\tangentM$. The \emph{acceleration} of a curve $\smoothcurve$ is defined as the covariant derivative of its velocity: $\smoothcurve''(t) \triangleq \frac{\D}{\mathrm{d}t}\smoothcurve'(0)$. 

The \emph{Riemannian Hessian}  of $\funman$ at $x$ along $\eta$ is the linear operator $\Hess \funman(x):\tangentM\to\tangentM$ defined by the relation $\Hess \funman(x)[\eta] \triangleq \left. \frac{\D}{\mathrm{d}t} \grad \funman(\smoothcurve(t)) \right|_{t=0}$, where $\smoothcurve$ is a smooth curve such that $\smoothcurve(0)=x$, $\smoothcurve'(0)=\eta$. 
Equivalently, we have
$\langle\Hess \funman(x)[\eta], \eta\rangle = \left. \frac{\mathrm{d}^2}{\mathrm{d}t^2}\funman\circ \geocurve(t) \right|_{t=0}$, where $\geocurve$ is a geodesic such that $\geocurve(0)=x$, $\gamma'(0) = \eta$.
A second order Taylor development can now be formulated. Let $x\in\M$, $\eta\in\tangentM$, and $\smoothcurve$ be a smooth curve such that $\smoothcurve(0)=x$, $\smoothcurve'(0)=\eta$. Then, for $t$ near $0$,
\begin{equation*}%
    \funman\!\circ\!\smoothcurve(t) = \funman(x) \!+ t \langle \grad \funman(x), \eta \rangle + \frac{t^2}{2}\!(\langle \Hess \funman(x)[\eta], \eta \rangle\!+\!\langle \grad \funman(x), c''(0)\rangle)+ o(t^2).
\end{equation*}
If $\funman:\M\to\RR$ has a smooth extension on $\RR^n$, the Riemannian gradient and Hessian can be computed from their Euclidean counterparts:
for a smooth function\;$\bar \funman:\RR^n\to\RR$ that coincides with $\funman$ on $\M$, %
\begin{equation}\label{eq:egrad_to_rgrad}
    \grad \funman(x) = \proj_{x}(\nabla \bar{\funman}(x)),
\end{equation}
and, for $\bar G:\bbR^n\to\bbR^n$ a smooth mapping that coincides with $\grad \funman$ on $\M$, %
\begin{equation}\label{eq:ehess_to_rhess}
    \Hess \funman(x)[\eta] = \proj_x \left( \D\bar G(x)[\eta] \right).
\end{equation}

\noindent\textbf{Algorithms on manifolds: retractions and convergence rates.}
Iterative Riemannian methods require a way to produce curves on $\M$ given a point $x$ and a tangent vector $\eta$. A geodesic curve passing at $(x, \eta)$, while attractive as the generalization of the straight line, has a prohibitive computational cost. We thus \emph{retractions}, i.e. approximations of it, defined on a manifold $\M$ as a smooth map $\R : \tangentBundle \to \M$ such that
\[%
    \text{$\R_x (0) = x\quad$ and $\quad \D\R_x (0) : \tangentM \to\tangentM$ is the identity map: $D\R_x (0 )[v] = v$,}
\]%
where, for each $x \in\M$, $\R_x : \tangentM\to\M$ is defined as the restriction of $\R$ at $x$, so that $\R_x(v) = \R(x, v)$. A \emph{second-order retraction} is a retraction $\R$ such that, for all $(x, \eta)\in\tangentBundle$, the curve $c(t)=\R_x(t\eta)$ has zero acceleration at 0: $c''(0)=0$.
Thus $t\mapsto\R_x(t\eta)$ is a practical curve passing by $(x, \eta)$ at $0$, and provides a similar development as above: %
for $t$ near 0,
\begin{equation}\label{eq:manifoldSecondOrderDevSecRetraction}
    \funman\circ\R_x(t \eta) = \funman(x) + t \langle \grad \funman(x), \eta\rangle + \frac{t^2}{2}\langle \Hess \funman(x)[\eta], \eta\rangle + o(t^2 \|\eta\|^2).
\end{equation}

Finally, the convergence rates on manifolds are defined as follows. A sequence of points $(x_k)$ \emph{converges (Q-)linearly} to some point $\bar{x}\in\M$ if there exist an integer $K>0$ and a constant $q\in(0,1)$ such that, for all $k\ge K$, there holds
\begin{equation*}
    \distM(x_{k+1}, \bar{x})\le q~ \distM(x_{k}, \bar{x}).
\end{equation*}
The sequence \emph{converges with order at least $p$} if there exists an integer $K>0$ and a constant $q\in(0,1)$ such that, for all $k\ge K$, there holds
\begin{equation*}
    \distM(x_{k+1}, \bar{x})\le q~\distM(x_{k}, \bar{x})^p.
\end{equation*}
The convergence is \emph{superlinear} when $p>1$ and \emph{quadratic} when $p=2$.

\medskip
\noindent\textbf{Examples of submanifolds and related objects.}
In this paper, we will illustrate our developments with two sparsity-inducing norms (see Section\;\ref{sec:ex}) involving respectively the two following manifolds.

\begin{example}[Fixed coordinate-sparsity subspaces]\label{ex:manifold_l1}
    We consider the submanifold %
    \begin{equation}
        \label{eq:Mi}
        \M_I \triangleq \{ x\in\bbR^n : x_i = 0 \text{ for } i\in I \},
    \end{equation}
    where $I \subset \{1, \dots, n\}$. This manifold is actually a vector space and all related notions have simple expressions, as follows.

     The tangent space at any point identifies with the manifold itself: $\tangentM_I = \M_I$. The orthogonal projection of a vector $d\in\bbR^n$ on the tangent space writes $\proj_x(d)$, where $[\proj_x(d)]_i$ is $d_i$ if $i\not\in I$, and null otherwise. The map $\R_x(\eta) = x+\eta$ defines a second-order retraction.
    Given a function $\funman$ defined on the ambient space, the Riemannian gradient and Hessian-vector product of the restriction of $F$ to $\M_I$ are obtained from their Euclidean counterparts by a simple projection: for $x, \eta\in\tangentBundle$,
    \begin{equation*}
        \grad \funman(x) = \proj_x(\nabla \funman(x)) \qquad \Hess \funman(x)[\eta] = \proj_x(\nabla^2 \funman(x)[\eta]).
    \end{equation*}
\end{example}

\begin{example}[Fixed rank matrices]\label{ex:manifold_fixedrank}
    We consider the manifold of fixed-rank matrices
    \begin{equation}
        \label{eq:Mr}
        \M_r \triangleq \{ x\in\bbR^{m\times n} : \rank(x) = r\},
    \end{equation}
    for which we refer to \cite[Sec.\;7.5]{boumal2022intromanifolds}. A rank-$r$ matrix $x\in\M_r$ is represented as $x = U \Sigma V^\top$, where $U\in\RR^{m \times r}$, $V\in\RR^{n \times r}$, $\Sigma\in\bbR^{r \times r}$ such that $U^\top U = I_n$, $V^\top V = I_m$ and $\Sigma$ is a diagonal matrix with positive entries. Such a decomposition can be obtained by computing the %
    singular value decomposition of the matrix $x$. Using this representation, a tangent vector $\eta\in\tangentM_r$ writes
    \begin{equation*}
        \eta = UMV^\top + U_p V^\top + U V_p^\top,
    \end{equation*}
    where $M\in\bbR^{r\times r}$, $U_p\in\bbR^{m\times r}$, $V_p\in\bbR^{n\times r}$ such that $U^\top U_p = 0$, $V^\top V_p = 0$. The orthogonal projection of a vector $d\in\RR^{m\times n}$ onto $\tangentM_r$ writes $\proj_x(d) = d - U^\top d V$.
    Given a function $\funman$ defined on the ambient space, a Riemannian gradient and Hessian-vector product of $F$ restricted to $\M_r$ can be obtained from their Euclidean counterparts: for $x, \eta\in\tangentBundle$, and with 
    $P_U^\top = I_m - UU^\top$, $P_V^\top = I_n - VV^\top$.
    {\footnotesize
        \begin{align*}
            \grad \funman(x) &= \proj_x(\nabla F(x)) \\
            \Hess \funman(x)[\eta] &= \proj_x(\nabla^2 \funman(x)[\eta]) + \left[ P_U^\top \nabla \funman(x) V_p \Sigma^{-1} \right]V^\top + U\left[ P_V^\top \nabla \funman(x)^\top U_p \Sigma^{-1} \right]^\top.
        \end{align*}
    }
\end{example}

\vspace*{-3ex}
\subsection{Recalls on nonsmooth optimization}

We review the basic notions of variational analysis used in this paper, following the monograph\;\cite{rockafellar2009variational}.
 For this section, $\funns\colon\bbR^n\to\bar{\bbR}=\RR\cup\{+\infty\}$ is a proper function.

\medskip
\noindent\textbf{Subgradients.}
Consider a point $\bar{x}$ with $\funns(\bar{x})$ finite. The set of \emph{regular subgradients} %
\begin{equation*}
    \widehat\partial \funns(\bar{x}) \triangleq \left\{ v : \funns(x) \ge \funns(\bar{x}) + \langle v, x-\bar{x} \rangle + o(\|x-\bar{x}\|) \text{ for all } x\in\bbR^n \right\}
\end{equation*}
is closed and convex, but the subdifferential mapping $\widehat\partial \funns(\cdot)$ may not be outer semi-continuous \cite[Th. 8.6, Prop. 8.7]{rockafellar2009variational}. To overcome this problem, the set of \emph{(general or limiting) subgradients} is defined as
\begin{equation*}
    \partial \funns(\bar{x}) \triangleq \left\{ \lim_r v_r : v_r\in\widehat\partial \funns(x_r), x_r\to\bar{x}, \funns(x_r)\to \funns(\bar{x}) \right\}.
\end{equation*}
The limiting subdifferential is by design outer semi-continuous:
\[
\limsup_{x\to\bar{x}} \partial \funns (x) = \{u : \exists x_r\to\bar{x}, \exists u_r \to u \text{ with } u_r\in  \partial \funns (x_r) \} ~\subset~ \partial \funns (\bar{x}),
\]
which is an attractive property to study the properties of sequences of points whose subgradients converge.
We say that a function is \emph{(Clarke) regular} at $\bar{x}$ if the regular and limiting subdifferentials at $\bar{x}$ coincide \cite[Def. 7.25, Cor. 8.11]{rockafellar2009variational}. This is notably the case for convex functions where the two above definitions coincide with the convex subdifferential \cite[Prop. 8.12]{rockafellar2009variational}.

\medskip
\noindent\textbf{Optimality conditions and critical points.}
The subdifferential allows to derive optimality conditions: for a local minimizer $\bar x$ of $F$, we have $0\in \partial F(\bar x)$. For the objective function of \eqref{pb:compositepb}, this writes
\[
0\in \nabla f (\bar x) + \partial g(\bar x) \qquad\text{or equivalently}\qquad -\nabla f (\bar x) \in \partial g(\bar x).
\]
A point satisfying these conditions is called a critical point. The analysis of the algorithms of this paper will provide convergence guarantees towards critical points. %

\medskip
\noindent\textbf{Proximity operator.}
A central tool to tackle non-differentiable functions is the \emph{proximity operator}.
For\;$\gamma\!>\!0$ and a function $\funns$; it is defined as the set-valued mapping
\begin{equation*}
    \prox_{\gamma \funns}(y) \triangleq \argmin_{u\in\bbR^n} \left\{ \funns(u) + \frac{1}{2\gamma}\|u-y\|^2\right\}.
\end{equation*}
Since this operator will be at the core of our future developments, \emph{we will assume that it is non-empty for all $y$}. Note that this is a reasonable assumption since it is satisfied as soon as $\funns$  is lower-bounded\footnote{The weaker assumption %
of prox-boundedness (ie. $\funns+r\|\cdot\|^2$ is bounded below for\;some\;$r$) implies that $ \prox_{\gamma \funns}(y) $ is non-empty %
when $\gamma$ is taken sufficiently small; see \cite[Chap. 1.G]{rockafellar2009variational}.}, %
which is trivially verified by our functions of interest (see Section~\ref{sec:ex}). Though computing proximal points is in general difficult, it is easy for some relevant cases %
as the $\ell_1$-norm or the trace-norm; see Section\;\ref{sec:ex}.%

\medskip
\noindent\textbf{Prox-regularity.}
A function $\funns$ is \emph{prox-regular} at a point $\bar{x}$ for a subgradient $\bar{v} \in \partial \funns(\bar{x})$ if $\funns$ is finite, locally lower semi-continuous at $\bar{x}$, and there exists $r>0$ and $\varepsilon>0$ such~that
$%
    \funns(x') \ge \funns(x) + \langle v, x'-x\rangle - \frac{r}{2}\|x'-x\|^2
$ %
whenever $v \in \partial \funns(x)$, $\|x-\bar{x}\|<\varepsilon$, $\|x'-\bar{x}\|<\varepsilon$, $\|v-\bar{v}\|<\varepsilon$ and $\funns(x)<\funns(\bar{x})+\varepsilon$.
When this holds for all $\bar{v} \in\partial \funns(\bar{x})$, we say that $\funns$ is prox-regular at $\bar{x}$ \cite[Def. 13.27]{rockafellar2009variational}.

This property allows to have local Lipschitzness of the proximal operator as well as its characterization by first-order optimality conditions; see \cite[Th. 4]{hare2009computing} and \cref{th:proxex}. Specifically, we will use that if $\funns$ is $r$-prox-regular at $\bar{x}$, then, for any $\gamma<1/r$, $\prox_{\gamma \funns}(y)$ is single-valued and Lipschitz continuous for any $y$ near $\bar{x}+\gamma \bar{v}$ where $ \bar{v}\in\partial \funns(\bar{x})$ and $\bar{x} = \prox_{\funns/r}(\bar{x}+ \bar{v}/r)$. Furthermore, in this neighborhood, it is uniquely determined by the relation
 $
    x = \prox_{\gamma \funns}(y)  ~\Leftrightarrow~  \frac{y-x}{\gamma} \in \partial \funns(x),
$
which characterizes proximal maps using first-order optimality conditions. %

\vspace*{-1ex}
\subsection{Running examples}
\label{sec:ex}

\begin{example}[$\ell_1$ norm]
    In the context of \Cref{ex:manifold_l1}, we consider the $\ell_1$ norm defined on $\bbR^n$ as $ \|x\|_1 = \sum_{i=1}^n |x_i|$. This function is convex, thus prox-regular at every point with $r=0$. %
    Its proximity operator admits a closed form: %
    \begin{equation*}
        [\prox_{\gamma \|\cdot\|_1}(y)]_i =
        \begin{cases}
            y_i + \gamma &\text{ if } y_i<-\gamma\\
            0 &\text{ if } -\gamma \le y_i \le \gamma\\
            y_i - \gamma &\text{ if } y_i>\gamma
        \end{cases}
    \end{equation*}
    which naturally gives sparse outputs. In other words,  $x= \prox_{\gamma \|\cdot\|_1}(y)$ lies on  $\M_{{I}}$ (see \eqref{eq:Mi}) where ${I}$ is the complementary of support of $x$.
Observe also that the restriction of $\|\cdot\|_1$ to the manifold $\M_{{I}}$ is locally smooth.
The $\ell_1$ norm thus admits a Riemannian gradient and Hessian at point ${x}$:
    \begin{equation*}
        \grad \|\cdot\|_1({x}) = \sign({x}) \hspace*{0.7cm} \text{ and }  \hspace*{0.7cm}  \Hess \|\cdot\|_1 ({x}) = 0,
    \end{equation*}
    where $\sign(x)\in\{-1, 0, 1\}$ denotes the sign of $x$, null when $x=0$.
\end{example}

\begin{example}[nuclear norm]
    Following the notation of \cref{ex:manifold_fixedrank}, we consider the nuclear norm, defined on $\bbR^{m\times n}$ as
    $
        \|x\|_* = \sum_{i=1}^{\rank(x)} \Sigma_{ii},
    $
    where $\Sigma$ denotes the diagonal term of the singular value decomposition of $x$. This function is convex, and thus prox-regular at every point with $r=0$. Its proximity operator admits a closed form: for matrix $y$ ($=U\Sigma V^\top$),
    \begin{equation*}
        \prox_{\gamma \|\cdot\|_*}(y) = U (\Sigma - \gamma)_+ V^\top,
    \end{equation*}
    where the coefficient $(i,j)$ of $(\Sigma - \gamma)_+$ is defined as $\max(\Sigma_{i j}-\gamma, 0)$.
    Thus, $x = \prox_{\gamma \|\cdot\|_*}(y)$ has low rank, by construction. Said otherwise, $x$ lies on $\M_{r}$ (see \eqref{eq:Mr}) where $r = \rank(\Sigma - \gamma)_+$. Observe also that the restriction of the nuclear norm to the manifold $\M_{r}$  is locally smooth, and thus admits a Riemannian gradient and Hessian at point $x$: denoting $\eta=U M V^\top + U_p V^\top + U V_p^\top\in\tangentM_{r}$ a tangent vector,
    \begin{align*}
        \grad \|\cdot\|_*(x) &= U V^\top \\
        \Hess \|\cdot\|_*(x)[\eta] &= U \left[\tilde{F} \circ (M-M^\top) \right] V^\top + U_p \Sigma^{-1}V^\top + U\Sigma^{-1}V_p^T,
    \end{align*}
    where $\circ$ denotes the Hadamard product and $\tilde{F}\in\RR^{{\bar{r}}\times {\bar{r}}}$ is such that $\tilde{F}_{ij} = 1 / (\Sigma_{jj} + \Sigma_{ii})$ if $\Sigma_{jj} \neq \Sigma_{ii}$, and $\tilde{F}_{ij}=0$ otherwise. This statement is proved in \cref{sec:diffsvd}.
\end{example}

\section{General proximal algorithm with Riemannian acceleration }\label{sec:genealgo}
\label{sec:algo}

As mentioned in the introduction and in the previous examples,
the output of a proximity operator often comes with the knowledge of the current manifold on which it lives. In this section, we leverage this ability to an algorithmic advantage by reducing our working space to the identified structure. ``Smooth'' structures (involving smooth submanifolds and smooth restrictions on it) are of special interest and open the way to Newton acceleration.

Let us start by specifying 
the blanket assumptions on the %
problem\;\eqref{pb:compositepb}. These assumptions are mostly common except the third point which directly comes from our idea of using the proximal operator both for the optimization itself and as an oracle for the current structure of the iterates.
\begin{assumption}\label{asm:fun}
    The functions $f$ and $g$ are proper and 
    \begin{itemize}
        \item[i)] $f$ is $\C^2(\bbR^n)$ with an $L$-Lipschitz continuous gradient;
        \item[ii)] $g$ is lower semi-continuous;
        \item[iii)] $\prox_{\gamma g}$ is non-empty on $\mathbb R^n$ for any $\gamma>0$;
        \item[iv)] $F(x) = f(x) + g(x)$ is bounded below.
    \end{itemize}
\end{assumption}

In this setup, we propose a general algorithm (Algorithm\;\ref{alg:generic}) which consists in, first, performing a proximal gradient step $x_{k} \in \prox_{\gamma g}(y_{k-1} - \gamma \nabla f(y_{k-1}))$ that provides both the current point $x_k$ and the manifold  $\M_k$ where it lies, and, second, carrying out a Riemannian optimization update $\manupdate_{\M_k}$ on the current manifold. This algorithm is {general} in the sense that we do not precise for now what is the Riemannian step $\manupdate$. 

We start in Section\;\ref{sec:proxgrad} with a technical result about the local smoothness of the proximal gradient operator. In Section\;\ref{sec:identif}, we analyze the %
identification property of this algorithm. In Section\;\ref{sec:super}, we study how Riemannian methods with local superlinear convergence %
propagate their rate to %
\cref{alg:generic}. 
We will investigate later in Section\;\ref{sec:specificationsManUp} the Riemannian Newton acceleration falling into this scheme.

\begin{algorithm}
    \caption{General structure exploiting algorithm\label{alg:generic}}
    \begin{algorithmic}[1]
        \Require \revise{Pick $x_{0}$ arbitrary, $\gamma < 1 / L$.}
        \Repeat
        \State Compute $x_{k}\in\prox_{\gamma g}(y_{k-1} - \gamma \nabla f(y_{k-1}))$ and get $\M_{k}\ni x_{k}$ %
        \State Update $y_{k} = \manupdate_{\M_{k}} (x_{k})$ on the current manifold %
        \Until{stopping criterion}
    \end{algorithmic}
\end{algorithm}

\vspace*{-2ex}
\subsection{Smoothness and localization of the proximal gradient}\label{sec:proxgrad}

The results of this section are built on $g$ being a partly smooth function; see \cite{lewis2002active}.
\begin{definition}[partial smoothness]
    A function $g$ is ($\C^2$-)\emph{partly smooth} at a point $\bar{x}$ relative to a set $\M$ containing $\bar{x}$ if $\M$ is a $\C^2$ manifold around $\bar{x}$ and:
    \begin{itemize}
        \item (smoothness) the restriction of $g$ to $\M$ is a $\C^2$ function near $\bar{x}$;
        \item (regularity) $g$ is (Clarke) regular at all points $x\in\M$ near $\bar{x}$, with $\partial g(x)\neq\emptyset$;
        \item (sharpness) the affine span of $\partial g(\bar{x})$ is a translate of $\normalM[\bar{x}]$;
        \item (sub-continuity) the set-valued mapping $\partial g$ restricted to $\M$ is continuous at $\bar{x}$.
    \end{itemize}
\end{definition}

Under this assumption, we show in the next theorem that the proximal gradient smoothly locates active manifolds: if some input $\bar{y}$ is mapped onto $\M$, then the proximal gradient is $\M$-valued and $\mathcal{C}^1$ around $\bar{y}$. This result is based on the sensitivity analysis of partly smooth functions \cite[Sec. 5]{lewis2002active}. \revise{The proof extends and refines the rationale of \cite[Th.\;28]{daniilidis2006geometrical} and \cite[Th.\;4.4]{poliquin1996prox} that deal with the proximity operator. We use this extension to allow for a full stepsize range of $(0,1/r)$ in the proximal gradient around any point $\bar{x}$.}

\begin{theorem}[Proximal gradient points smoothly locate manifolds] \label{th:PGpointlocatemanifolds}
    Let $f$ be a $\C^2$ function on $\RR^n$ and $g$ a lower semi-continuous function on $\RR^n$. Suppose that $g$ is both $r$-prox-regular at $\bar{x}$ and partly-smooth relative to $\M$ at $\bar{x}$.

    Take $\gamma$, $\bar \gamma$ such that $0<\gamma<\bar \gamma \leq 1/r$ and $\bar{x}=\prox_{\bar\gamma g}(\bar{y}-\bar\gamma \nabla f(\bar{y}))$. If
    \begin{itemize}
        \item[i)] $\frac{1}{\gamma}(\bar{y}-\bar{x}) - \nabla f(\bar{y})  \in \ri \partial g(\bar{x})$ ~\revise{(the relative interior of the subdifferential at $\bar{x}$)};
        \item[ii)] either a) $\gamma$ is sufficiently close to $\bar{\gamma}$, or b) $\bar{y}$ is sufficiently close to $\bar{x}$;
    \end{itemize}
    then, the proximal gradient $y \mapsto \prox_{\gamma g}(y - \gamma\nabla f(y))$ is $\C^{1}$ and $\M$-valued near $\bar{y}$.
\end{theorem}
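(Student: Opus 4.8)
The plan is to realize the proximal gradient map as the solution of a smooth implicit (Riemannian stationarity) system on $\M$ and to apply the implicit function theorem, then to verify that this solution is genuinely the prox point and stays on $\M$. Write $\PGop(y) = \prox_{\gamma g}(y - \gamma\nabla f(y))$ and set $z(y) = y - \gamma\nabla f(y)$, so that $z(\bar y) = \bar x + \gamma v_\gamma$ with $v_\gamma = \tfrac1\gamma(\bar y - \bar x) - \nabla f(\bar y)\in\ri\,\partial g(\bar x)$ by hypothesis (i). First I would establish that $\bar x = \PGop(\bar y)$ and that, for the working stepsize $\gamma$, the prox is well-behaved near $z(\bar y)$. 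Since $\gamma < \bar\gamma \le 1/r$ and $v_\gamma\in\partial g(\bar x)$, the prox-regularity facts recalled before the theorem give that $\prox_{\gamma g}$ is single-valued, Lipschitz, and characterized by $x = \prox_{\gamma g}(z)\Leftrightarrow \tfrac{z-x}{\gamma}\in\partial g(x)$ on a neighborhood of $z(\bar y) = \bar x + \gamma v_\gamma$; in particular $\bar x = \PGop(\bar y)$. The role of hypothesis (ii) — either $\gamma$ close to $\bar\gamma$, or $\bar y$ close to $\bar x$ — is precisely to guarantee that $z(\bar y)$ lands inside this good neighborhood attached to the stepsize $\gamma$, transferring the reference identity, stated for $\bar\gamma$, to $\gamma$.

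Next I would split the inclusion $\tfrac{z-x}{\gamma}\in\partial g(x)$ for $x\in\M$ using partial smoothness. By the sharpness and sub-continuity of $g$ along $\M$, for $x\in\M$ near $\bar x$ the affine hull of $\partial g(x)$ is $\grad g(x) + \normalM$, where $\grad g(x)\in\tangentM$ is the Riemannian gradient of the restriction $g|_{\M}$, and the tangential projection of $\partial g(x)$ is the single point $\grad g(x)$. Hence, for $x\in\M$, the inclusion is equivalent to the pair
\[
\proj_x\!\Big(\tfrac{z-x}{\gamma}\Big) = \grad g(x), \qquad (\mathrm{Id}-\proj_x)\Big(\tfrac{z-x}{\gamma}\Big)\in(\mathrm{Id}-\proj_x)\,\partial g(x).
\]

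I would then solve the tangent equation by the implicit function theorem, after observing that it is exactly the Riemannian stationarity condition $\grad\big(\Psi_y|_{\M}\big)(x)=0$ of the proximal objective $\Psi_y = g + \tfrac{1}{2\gamma}\|\cdot - z(y)\|^2$ restricted to $\M$. Read in a local $\C^2$-parametrization $\varphi$ of $\M$, the left-hand side is a $\C^1$ map of $(x,y)$, using $f\in\C^2$, $g|_{\M}\in\C^2$ (so $\grad g$ is $\C^1$), and the smooth dependence of $x\mapsto\proj_x$. Its partial derivative in $x$ at $(\bar x,\bar y)$ is the restricted Riemannian Hessian $\Hess\big(\Psi_{\bar y}|_{\M}\big)(\bar x)$; since $g$ is $r$-prox-regular and $\tfrac1\gamma > r$ (because $\gamma<\bar\gamma\le1/r$), the objective $\Psi_{\bar y}$ is locally strongly convex along $\M$, so this Hessian is positive definite, hence invertible. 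The implicit function theorem then yields a $\C^1$ map $y\mapsto x(y)\in\M$ with $x(\bar y)=\bar x$ solving the tangent equation near $\bar y$.

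Finally I would verify the normal inclusion to conclude that $x(y)$ is the true proximal gradient point. Since $v_\gamma\in\ri\,\partial g(\bar x)$, its normal component lies in the relative interior of $(\mathrm{Id}-\proj_{\bar x})\,\partial g(\bar x)$; by sub-continuity of $\partial g$ along $\M$ and continuity of all the data, the normal component $(\mathrm{Id}-\proj_{x(y)})\big(\tfrac{z(y)-x(y)}{\gamma}\big)$ remains in $(\mathrm{Id}-\proj_{x(y)})\,\partial g(x(y))$ for $y$ near $\bar y$. The full inclusion $\tfrac{z(y)-x(y)}{\gamma}\in\partial g(x(y))$ then holds, so by the first-order characterization of the first step $x(y)=\prox_{\gamma g}(z(y))=\PGop(y)$; thus $\PGop$ coincides near $\bar y$ with the $\C^1$, $\M$-valued map $x(\cdot)$, which is the claim. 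The main obstacle I expect is the invertibility step: identifying the $x$-derivative of the tangent equation with the restricted Hessian $\Hess(\Psi_{\bar y}|_{\M})(\bar x)$ and showing positive definiteness requires unwinding the Riemannian second-order objects — including the Weingarten/curvature terms arising from differentiating $x\mapsto\proj_x$ and from the normal part of $z-x$ — and checking that prox-regularity with $\gamma<1/r$ dominates them. The other delicate point is the neighborhood bookkeeping in the first step, ensuring that hypothesis (ii) really places $z(\bar y)$ in the region where the first-order characterization is valid for the stepsize $\gamma$.
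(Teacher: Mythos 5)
Your overall route is the same as the paper's: realize the proximal gradient point as the solution of the Riemannian stationarity equation $\grad(\rho_y|_\M)(x)=0$ for the proximal objective $\rho_y = g+\tfrac{1}{2\gamma}\|\cdot-y+\gamma\nabla f(y)\|^2$, get invertibility of the $x$-derivative from prox-regularity, apply the implicit function theorem to obtain a $\C^1$, $\M$-valued map $x(\cdot)$, and then certify that $x(y)$ is the true prox point by (a) persistence of the qualification condition and (b) the prox-regular first-order characterization of $\prox_{\gamma g}$. Your tangential/normal splitting is only a reformulation of the paper's condition $0\in\ri \partial\rho_y(x)$. Incidentally, the ``main obstacle'' you anticipate (Weingarten/curvature terms in identifying the derivative with the restricted Hessian) is a non-issue: positive definiteness follows directly from the growth inequality $\rho_{\bar y}(x)\ge\rho_{\bar y}(\bar x)+\tfrac12(\tfrac1\gamma-r)\|x-\bar x\|^2$ supplied by prox-regularity (the paper's \cref{lemma:prox_strong_min}), with no second-order geometry to unwind.

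Two of your steps, however, have genuine gaps. First, your opening claim is false as written: prox-regularity together with $v_\gamma\in\partial g(\bar x)$ does \emph{not} give single-valuedness of $\prox_{\gamma g}$, nor the characterization $x=\prox_{\gamma g}(z)\Leftrightarrow(z-x)/\gamma\in\partial g(x)$, on a neighborhood of $\bar x+\gamma v_\gamma$; in the nonconvex case the global minimizer of the prox subproblem can sit far from $\bar x$ (see the paper's footnote example, where $g$ is $1$-prox-regular and $0 \in \ri\partial g(0)$, yet $0$ is not the prox). What makes the characterization available is the \emph{global} identity $\bar x = \prox_{\bar\gamma g}(\bar y - \bar\gamma\nabla f(\bar y))$, whose certificate subgradient is $v_{\bar\gamma} = (\bar y-\bar x)/\bar\gamma-\nabla f(\bar y)$, not $v_\gamma$; the resulting good neighborhood for stepsize $\gamma$ (the paper's \cref{th:proxex}) is attached to $v_{\bar\gamma}$, and hypothesis (ii) enters exactly to bound $\|z(\bar y)-(\bar x+\gamma v_{\bar\gamma})\| = (1-\gamma/\bar\gamma)\,\|\bar y-\bar x\|$, which is small under either (ii.a) or (ii.b). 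Your sentence that (ii) ``guarantees that $z(\bar y)$ lands inside this good neighborhood'' is circular, since you centered that neighborhood at $z(\bar y)$ itself. Second, your Step 4 asserts in one line the persistence of the normal inclusion, which is precisely what the paper's Step 2 is devoted to proving. Mere continuity is not enough: a point converging to a relative-interior point of a continuously varying convex set need not belong to the nearby sets (think of a segment in $\RR^2$ tilting about a fixed point). A correct argument must exploit convexity of $\partial g(x(y))$ (Clarke regularity), inner semicontinuity (sub-continuity), and the fact that the point and the set are compared inside the same moving normal space whose role is pinned down by sharpness at $\bar x$ --- or, as the paper does, argue by contradiction with separating vectors $h_r$ and pass to the limit using continuity of $\partial\rho_y$ along $\M$. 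The architecture of your proof is right, but both of these load-bearing verifications still need to be supplied.
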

\begin{proof}
    Adopting the same reasoning as in \cite[Sec. 5]{lewis2002active} and \cite[Sec. 4.1]{daniilidis2006geometrical}, we consider the function
    \begin{equation*}
        \begin{array}{rl}
            \rho : \bbR^n\times\bbR^n & \to \bbR                                                 \\
            (x, y)                    & \mapsto g(x)+\frac{1}{2\gamma}\|x-y+\gamma\nabla f(y)\|^2,
        \end{array}
    \end{equation*}
    and denote by $\rho_y = \rho(\cdot,y)$. Computing the proximal gradient $\prox_{\gamma g}(y-\gamma\nabla f(y))$ can then be seen as minimizing the parametrized function $\rho_y$.

    \noindent\underline{Step 1.} As a first step, we study the minimizers of $\rho_y$ restricted to $\M$, for $y$ near\;$\bar{y}$. %
    We consider the parametric manifold optimization problem, for $y$ near $\bar{y}$:
    \begin{equation}\tag{$ P_\M(y)$}
        \min_{x\in\M} \rho_y(x). \label{pb:pb_mancstr}
    \end{equation}
    Since $g$ is $\C^2$-partly-smooth relative to $\M$ and $f$ is $\C^2(\bbR^n)$, $\rho_y$ is twice continuously differentiable on $\M$. \revise{Moreover, the $r$-prox-regularity gives easily (see \cref{lemma:prox_strong_min}) that $\rho_{\bar{y}}$ is lower-bounded by $(\frac{1}{\gamma}-r)\|\cdot - \bar{x}\|^2/2$ on a neighborhood of $\bar x$ in $\bbR^n$ and, a fortiori, in $\M$. From usual rationale (see e.g.
    \cite[Chap 4.2, 6.1]{boumal2022intromanifolds}), this implies}
    \begin{equation*}
        \grad \rho_{\bar{y}}(\bar{x}) = 0 \qquad \Hess \rho_{\bar{y}}(\bar{x}) \succeq \Big(\frac{1}{\gamma}-r\Big) I \succ 0,
    \end{equation*}
    \revise{which are the conditions to apply the implicit functions theorem, as follows.}

    We consider the equation $\Phi(x, y) = 0$, for $x, y$ near $\bar{x}, \bar{y}$, where $\Phi:\M\times\bbR^n\to\tangentBundle$ is defined as $\Phi(x, y) = \grad \rho_y(x)$. This function is continuously differentiable on a neighborhood of $(\bar{x}, \bar{y})$, and its differential relative to $\bar x$ at that point, $\Hess \rho_{\bar{y}}(\bar{x})$, is invertible. The implicit function theorem thus grants the existence of neighborhoods $\N_{\bar{x}}$, $\N_{\bar{y}}$ of $\bar{x}$, $\bar{y}$ in $\M$, $\bbR^n$, and a continuously differentiable function $\hat{x}:\N_{\bar{y}} \to \N_{\bar{x}}$ such that, for any $y$ in $\N_{\bar{y}}$, $\Phi(\hat{x}(y), y) = \grad \rho_y(\hat{x}(y)) = 0$. Actually, $\hat{x}(y)$ is a strong minimizer of $\rho_y$ on $\M$ for $y$ close enough to $\bar{y}$. Indeed, the mapping $\hat{x}$ is continuous on $\N_{\bar{y}}$, so that $y \mapsto \Hess \rho_y(\hat{x}(y))$ is also continuous there and the property $\Hess \rho_{\bar{y}}(\hat{x}(\bar{y}))\succ 0$ extends locally around $\bar{y}$.

    \noindent\underline{Step 2.}  As a second step, we turn to show that the minimizer $\hat{x}(y)$ of $\rho_y$ on $\M$ is actually a strong critical point of $\rho_y$ in $\bbR^n$ \cite[Def. 5.3]{lewis2002active}, and thus the proximal gradient of point $y$. More precisely, we claim that, for $y$ near $\bar{y}$ and $x=\hat{x}(y)$, there holds $0\in\ri \partial \rho_y(x)$, that is
    \begin{equation*}
        \frac{1}{\gamma}(y-x) - \nabla f(y)  \in \ri \partial g(x).
    \end{equation*}

    This property holds at $(\bar{x}, \bar{y})$ by assumption. By contradiction, assume there exist sequences of points $(y_r)$ with limit $\bar{y}$, $(x_r) = (\hat x(y_r))$ with limit $\bar{x} = \hat x(\bar{y})$ and $(h_r)$ of unit norm $\|h_r\|=1$ such that for all $r$, $h_r$ separates 0 from $\partial \rho_{y_r}(x_r)$:
    \begin{equation*}
        \inf_{h\in\partial \rho_{y_r}(x_r)} \langle h_r, h\rangle \ge 0.
    \end{equation*}
    Since $(h_r)$ is bounded, a converging subsequence can be extracted from it, let $\bar{h}$ denote its limit. At the cost of renaming iterates, we assume that $\lim_{r\to\infty} h_r = \bar{h}$. The above property still holds at the limit $r\to\infty$. Indeed, let $\bar{u}\in\partial \rho_{\bar{y}}(\bar{x})$. Since $g$ is partly smooth, the mapping $(x, y)\in \N_{\bar{x}}\times\N_{\bar{y}} \mapsto\partial \rho_y(x) = \partial g(x) + \frac{1}{\gamma}(x-y)$ is continuous. Therefore, there exists a sequence $(u_r)$ such that $u_r\in\partial\rho_{y_r}(x_r)$ and $\lim_{r\to\infty} u_r=\bar{u}$. We have for all $r$: $\langle u_r, h_r\rangle \ge 0$, which yields at the limit $\langle\bar{u}, \bar{h}\rangle\ge 0$. Thus $\bar{h}$ separates $0$ from $\partial \rho_{\bar{y}}(\bar{x})$, which contradicts our assumption.

    \noindent\underline{Conclusion.} We thus have a continuously differentiable function $\hat{x}$ defined on a neighborhood of $\bar{y}$ such that i) $\hat{x}(\bar{y}) = \bar{x}$, ii) $\hat{x}(y)$ is a strong minimizer of $\rho_y$ on $\M$, iii) $0\in\ri\partial \rho_y(\hat{x}(y))$.

    This last point tells us that $(y-\hat x(y))/\gamma -\nabla f(y) \in \partial g(\hat x(y))$. The characterization of proximity by the optimality condition (\cref{th:proxex}) gives
that $\hat{x}(y)=\prox_{\gamma g}(y-\gamma \nabla f(y))$ for $y$ close enough to $\bar{y}$.
\end{proof}

\begin{figure}[!ht]
    \centering
    \includegraphics[width=0.4\linewidth]{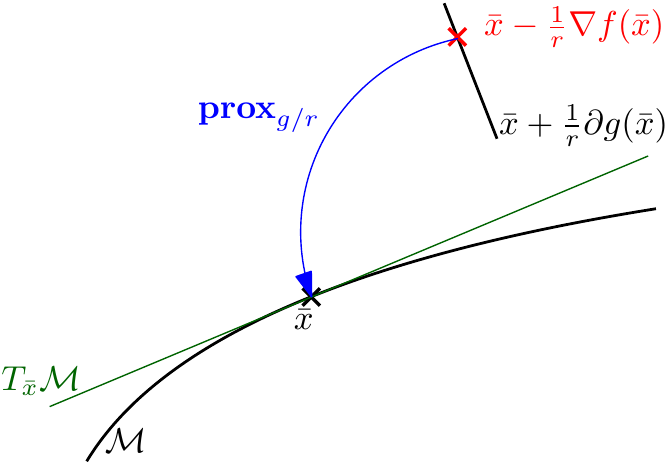}
    \caption{Illustration of a $r$-structured critical point. Point i) is illustrated by the blue arrow, and point ii) implies that the red cross is in the interior of the black segment. Partial smoothness appears in the fact that the black segment is perpendicular to the tangent plane of $\M$ at\;$\bar{x}$.\label{fig:struct}}
    \vspace*{-2ex}
\end{figure}

\vspace*{-2ex}
\subsection{Structure identification}\label{sec:identif}

\cref{th:PGpointlocatemanifolds} captures the localization properties of the proximal gradient operator. It also enables us to precisely define a condition under which a point can be localized. We formalize it
 in the definition of \emph{$r$-structured critical points}, an illustration of which is depicted on \cref{fig:struct}.

\begin{definition}\label{def:structqual}
   A point $\bar{x}$ of a $C^2$ submanifold $\M$ is a $r$-\emph{structured critical} point for $(f,g)$~if we have:
    \begin{itemize}
        \item[i)] proximal gradient stability: $\bar{x} = \prox_{g/r}(\bar{x}-1/r \nabla f( \bar{x} ) )$ ;
        \item[ii)] qualification condition: $0 \in \ri (\nabla f + \partial g)(\bar{x})$;
        \item[iii)] prox-regularity: $g$ is $r$-prox-regular at $\bar{x}$;
        \item[iv)] partial smoothness: $g$ is partly-smooth at $\bar{x}$ with respect to $\M$.
    \end{itemize}
\end{definition}

While %
ii),iii),iv) are %
standard in the literature (see e.g.\;\cite{daniilidis2006geometrical}), %
i) is not always explicited (an exception is for the notion of identifiability in~\cite{drusvyatskiy2014optimality}). %
It is directly verified when $g$ is convex (for any $r>0$), but this is not the case  when $g$ is nonconvex.\footnote{\revise{The following example shows that in the nonconvex setting, ii) and iii) do not necessarily imply i). Take $f$ null and $g$ as follows, then the proximity operator of $g$ at $0$ writes:
\begin{equation*}
    g(x) = \begin{cases}
      x^{2}/2 & \text{if } |x| \le 1\\
      1-3x/2 & \text{if } x \ge 1\\
      1+3x/2 & \text{if }x \le -1
    \end{cases},
    \qquad
    \prox_{\gamma g}(0) = \begin{cases}
      0 & \text{if } \gamma \in (0, 8/9) \\
      \{-3\gamma / 2, 0, 3\gamma / 2\} & \text{if } \gamma = 8/9 \\
      \{-3\gamma / 2, 3\gamma / 2\} & \text{if } \gamma > 8/9.
    \end{cases}
\end{equation*}
The function $g$ is $1$-prox-regular at $0$, there holds $0 \in \ri \partial g(0) = \{0\}$, and yet $0$ is not a fixed point of the proximal operator with stepsizes close to $1$.}}

Using this notion and \cref{th:PGpointlocatemanifolds}, we get the precise identification result of the proximal gradient algorithm, that we need in the forthcoming analysis.

\begin{corollary}[Identification]\label{coro:identif}
    Let $f$ be a $\C^2$ function on $\RR^n$ and $g$ a lower semi-continuous function on $\RR^n$. Take $\bar{x}\in\M$ a $r$-structured critical point for $(f,g)$. Then, for any $\gamma \in (0, 1/r)$, if the sequence $(y_k)$ satisfies $y_k\to\bar{x}$, then $x_k\triangleq\prox_{\gamma g}(y_k-\gamma \nabla f(y_k)) \in \M $ for $k$ large enough.
\end{corollary}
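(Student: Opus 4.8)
The plan is to obtain \cref{coro:identif} as a direct application of \cref{th:PGpointlocatemanifolds}, specialized to the base point $\bar{y}=\bar{x}$ and the maximal stepsize $\bar{\gamma}=1/r$. The whole argument amounts to checking that the four defining properties of an $r$-structured critical point (\cref{def:structqual}) are exactly the hypotheses of \cref{th:PGpointlocatemanifolds} under this choice, and then letting the convergence $y_k\to\bar{x}$ feed the iterates into the neighborhood furnished by that theorem. First I would record that properties iii) and iv) of \cref{def:structqual} say precisely that $g$ is $r$-prox-regular at $\bar{x}$ and partly smooth relative to $\M$ at $\bar{x}$, which are the standing assumptions of \cref{th:PGpointlocatemanifolds}.

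Next I would match the base-point requirement and the two numbered hypotheses of the theorem. Taking $\bar{\gamma}=1/r$ and $\bar{y}=\bar{x}$, the condition $\bar{x}=\prox_{\bar{\gamma}g}(\bar{y}-\bar{\gamma}\nabla f(\bar{y}))$ becomes $\bar{x}=\prox_{g/r}(\bar{x}-\tfrac{1}{r}\nabla f(\bar{x}))$, which is exactly the proximal gradient stability i). For hypothesis i) of the theorem, setting $\bar{y}=\bar{x}$ collapses $\tfrac{1}{\gamma}(\bar{y}-\bar{x})-\nabla f(\bar{y})\in\ri\partial g(\bar{x})$ to $-\nabla f(\bar{x})\in\ri\partial g(\bar{x})$; since translating a set by the single vector $\nabla f(\bar{x})$ commutes with taking the relative interior, i.e. $\ri\big(\nabla f(\bar{x})+\partial g(\bar{x})\big)=\nabla f(\bar{x})+\ri\partial g(\bar{x})$, this is equivalent to the qualification condition ii), $0\in\ri(\nabla f+\partial g)(\bar{x})$. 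Finally, hypothesis ii) of the theorem is met through option b): since $\bar{y}=\bar{x}$, the base point is trivially ``sufficiently close'' to $\bar{x}$. I would also note that the prescribed range $\gamma\in(0,1/r)$ is exactly the admissible range $0<\gamma<\bar{\gamma}\le 1/r$.

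With all hypotheses verified, \cref{th:PGpointlocatemanifolds} yields a neighborhood $\N_{\bar{x}}$ of $\bar{x}$ on which the map $y\mapsto\prox_{\gamma g}(y-\gamma\nabla f(y))$ is well defined (single-valued, by $r$-prox-regularity together with $\gamma<1/r$), of class $\C^{1}$, and $\M$-valued. To conclude I would simply invoke the convergence $y_k\to\bar{x}$: there is an index $K$ such that $y_k\in\N_{\bar{x}}$ for all $k\ge K$, and hence $x_k=\prox_{\gamma g}(y_k-\gamma\nabla f(y_k))\in\M$ for all such $k$, which is the claim.

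There is no genuine obstacle here, as the corollary is essentially the ``at the critical point'' instance of \cref{th:PGpointlocatemanifolds}; the only point deserving care is the bookkeeping that makes the specialization legitimate, namely the choice $\bar{y}=\bar{x}$, $\bar{\gamma}=1/r$, the use of option b) in hypothesis ii), and the elementary relative-interior identity used to turn hypothesis i) of the theorem into the qualification condition ii) of \cref{def:structqual}.
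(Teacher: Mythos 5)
Your proof is correct and follows essentially the same route as the paper: both specialize \cref{th:PGpointlocatemanifolds} to $\bar{y}=\bar{x}$, $\bar{\gamma}=1/r$, using the four properties of an $r$-structured critical point to discharge the theorem's hypotheses, and then let $y_k\to\bar{x}$ push the iterates into the neighborhood where the proximal gradient map is $\M$-valued. The only difference is that you spell out the hypothesis-matching (in particular the relative-interior translation identity and the trivial use of option b)) which the paper leaves implicit.
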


\begin{proof}
    The notion of $r$-structured critical point allows us to apply 
    \cref{th:PGpointlocatemanifolds} with $\bar{x}=\bar{y}\in\M$ and $\bar \gamma = 1/r$. So we get that, for any $\gamma \in (0, 1/r)$, the proximal gradient map $y \mapsto \prox_{\gamma g}(y - \gamma\nabla f(y))$ is $\C^{1}$ and $\M$-valued near $\bar{x}$. Since its input $(y_k)$ converges to $\bar{x}$, the proximal gradient mapping reaches the neighborhood in finite time, which guarantees that $(y_k)$ are $\M$-valued.
\end{proof}

\subsection{Superlinear convergence} %
\label{sec:super}

Using the structure identification result above, we can guarantee that our method benefits from superlinear convergence, provided that the considered Riemannian method is superlinearly convergent locally around a limit point. %

\begin{theorem}%
    \label{th:identiflocalcv_sperlin}
    Let \cref{asm:fun} hold and take $\gamma\in(0, 1/L)$\revise{, where $L$ is the Lipschitz constant for $\nabla f$}. Assume that \cref{alg:generic} generates a sequence $(y_k)$ which admits at least one limit point $\bar{x}$ such that:
    \begin{itemize}
        \item[i)] $\bar{x}\in\M$ is a $r$-structured critical point for $(f,g)$ with $r<1/\gamma$;
        \item[ii)] $\manupdate_{\M}$ has superlinear convergence rate of order $1+\theta\in(1,2)$ %
        near $\bar{x}$ in $\M$.
    \end{itemize}
    Then, after some finite time:
    \begin{itemize}
        \item[a)] the full sequence $(x_k)$ lies on $\M$;
        \item[b)] $x_k$ converges to $\bar{x}$ superlinearly with the same order as $\manupdate$:
              \begin{equation}\label{eq:super}
                  \distM(x_{k+1},\bar{x}) \le c ~ \distM(x_k, \bar{x})^{1+\theta} \qquad \text{ for some $c>0$.}
              \end{equation}
    \end{itemize}
\end{theorem}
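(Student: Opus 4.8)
The plan is to carve out a small neighborhood of $\bar{x}$ that is forward-invariant under one full iteration of \cref{alg:generic}, inside which the two steps compose into a superlinear contraction, and then to use the limit-point hypothesis \emph{only} to guarantee that the sequence eventually lands in this neighborhood. Write $\PGop(y)\triangleq\prox_{\gamma g}(y-\gamma\nabla f(y))$ for the proximal-gradient operator, so that $x_k=\PGop(y_{k-1})$ and $y_k=\manupdate_{\M_k}(x_k)$. First I would record two local facts at $\bar{x}$. Since $\bar{x}$ is an $r$-structured critical point with $r<1/\gamma$, conditions i)--iv) of \cref{def:structqual} let me run exactly the argument of \cref{coro:identif}: applying \cref{th:PGpointlocatemanifolds} with $\bar{y}=\bar{x}$ and $\bar\gamma=1/r$ yields an ambient neighborhood $V$ of $\bar{x}$ on which $\PGop$ is $\C^1$ and $\M$-valued. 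The criticality $0\in\nabla f(\bar{x})+\partial g(\bar{x})$ together with the prox-regular characterization of the proximity operator (\cref{th:proxex}) shows $\PGop(\bar{x})=\bar{x}$; being $\C^1$ with this fixed point, $\PGop$ is locally Lipschitz, so after shrinking $V$ there is $L_{\PGop}\ge 0$ with $\|\PGop(y)-\bar{x}\|\le L_{\PGop}\|y-\bar{x}\|$ on $V$. On the other side, I read hypothesis ii) as the one-step estimate: there are $\rho>0$ and $q\in(0,1)$ with $\distM(\manupdate_\M(x),\bar{x})\le q\,\distM(x,\bar{x})^{1+\theta}$ for all $x\in\M$ with $\distM(x,\bar{x})\le\rho$.

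Next I would glue the two estimates across one iteration, using that near $\bar{x}$ the ambient and intrinsic distances on $\M$ are equivalent, say $\|u-v\|\le\distM(u,v)\le\kappa\|u-v\|$ for $u,v\in\M$ near $\bar{x}$. Suppose $y_{k-1}$ lies in a ball $W=\{\|\cdot-\bar{x}\|<\delta\}\subseteq V$. Then $x_k=\PGop(y_{k-1})\in\M$ with $\distM(x_k,\bar{x})\le\kappa L_{\PGop}\|y_{k-1}-\bar{x}\|$, which for $\delta$ small is $\le\rho$; the oracle returns $\M_k=\M$, the one-step estimate applies, and $y_k=\manupdate_\M(x_k)\in\M$ with $\distM(y_k,\bar{x})\le q\,\distM(x_k,\bar{x})^{1+\theta}$. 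Composing with the Lipschitz bound for the next prox-gradient step and converting metrics gives the key one-iteration inequality
\[
    \distM(x_{k+1},\bar{x})\le\kappa L_{\PGop}\|y_k-\bar{x}\|\le\kappa L_{\PGop}\,\distM(y_k,\bar{x})\le c\,\distM(x_k,\bar{x})^{1+\theta},\qquad c\triangleq\kappa L_{\PGop}q.
\]
Choosing $\delta$ additionally so small that $c\,\delta^{\theta}<1$ forces $\distM(y_k,\bar{x})<\delta$, hence $y_k\in W$, so $W$ is forward-invariant and the inequality chains.

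Finally, since $\bar{x}$ is a limit point of $(y_k)$, some $y_{k_0}\in W$. Starting the forward-invariance induction at $k-1=k_0$ gives $x_k\in\M$ for every $k>k_0$, which is claim a), together with $\distM(x_{k+1},\bar{x})\le c\,\distM(x_k,\bar{x})^{1+\theta}$ for all $k>k_0$, which is exactly \eqref{eq:super} and in particular forces $x_k\to\bar{x}$ with order $1+\theta$, proving claim b).

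The main obstacle is twofold. The hypothesis only supplies a \emph{limit} point, which is weaker than convergence; this is handled precisely by making $W$ forward-invariant, so that a single visit triggers the whole tail. The second, more technical point is propagating the superlinear order through the composition $\PGop\circ\manupdate_\M$: the prox-gradient step is only Lipschitz (order $1$), so the exponent $1+\theta$ must come entirely from $\manupdate_\M$ and survives only because a Lipschitz map degrades the constant but not the exponent. Executing this cleanly requires care with the interplay between the ambient norm (natural for $\PGop$) and the intrinsic distance $\distM$ (natural for $\manupdate_\M$) via their local equivalence, and with the indexing subtlety that $y_{k_0}$ itself need not lie on $\M$ while its image $x_{k_0+1}=\PGop(y_{k_0})$ does.
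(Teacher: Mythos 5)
Your proposal is correct and follows essentially the same route as the paper's proof: local $\C^1$/$\M$-valuedness and Lipschitz continuity of $\PGop$ near $\bar{x}$ from \cref{th:PGpointlocatemanifolds} (together with the fixed point $\PGop(\bar{x})=\bar{x}$), the one-step superlinear estimate for $\manupdate_\M$, the Euclidean--Riemannian metric equivalence of \cref{lemma:Euclidean_to_Riemannian}, a forward-invariant neighborhood for the composed iteration $\PGop\circ\manupdate_\M$, and the limit-point hypothesis used only to enter that neighborhood. The differences are cosmetic: the paper's invariant set is a Riemannian ball of $x$-iterates rather than an ambient ball of $y$-iterates, and your invariance condition $c\,\delta^{\theta}<1$ should more precisely be $q(\kappa L_{\PGop})^{1+\theta}\delta^{\theta}\le 1$, which is harmless since $\delta$ may be taken as small as needed.
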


\begin{proof}
    Let us note $ \PGop(y) = \prox_{\gamma g}(y - \gamma \nabla f(y)) $ for $y\in\mathbb{R}^n$.
    The part i) of the assumptions enables us to show the existence of some neighborhood of $\bar{x}$ on which the proximal gradient operation is $\M$-valued and Lipschitz continuous. More precisely, 
    \cref{th:PGpointlocatemanifolds} 
    implies that there exists $\delta_1>0$ and $C>0$ such that, 
    \[%
      \PGop(y)\in\M \label{proof:inM} \quad \text{ and } \quad \|\PGop(y)- \PGop(\bar{x})\| \le C \|y-\bar{x}\| \qquad \text{for all $y$ in $\B(\bar{x}, \delta_1)$}.
    \]%
    Now, if $y$ belongs to $\M$, we get that there exists $\varepsilon_1>0$ such that for any $y$ in $\B_{\M}(\bar{x}, \varepsilon_1)$, $\PGop(y)\in\M$; but in addition, the Euclidean Lipschitz continuity can be translated into a Riemannian one (see \cref{lemma:Euclidean_to_Riemannian}) since   for some $\delta>0$,
    \begin{multline}\label{eq:EuctoRie}
        (1-\delta)\distM(\PGop(y), \bar{x}) = (1-\delta)\distM(\PGop(y), \PGop(\bar{x}))  \le \|\PGop(y)-\PGop(\bar{x})\| \\
        \le C \|y-\bar{x}\| \le C (1+\delta)\distM(y, \bar{x})
    \end{multline}
    Hence, there is $q_1>0$ such that for any $y$ in $\B_{\M}(\bar{x}, \varepsilon_1)$
    \begin{equation}\label{proof:PGopEstimate}
        \distM(\PGop(y), \bar{x}) =  \distM(\PGop(y), \PGop(\bar{x}))   \le  q_1 ~ \distM(y, \bar{x}).
    \end{equation}

    Then, the part ii) of the assumptions gives us the existence of $\varepsilon_2$, $q_2>0$ and $\theta\in(0,1)$ such that, for any $x$ in $\B_{\M}(\bar{x}, \varepsilon_2)$,
    \begin{equation}\label{proof:manupdateEstimate}
        \distM(\manupdate_{\M}(x), \bar{x}) \le q_2 ~ \distM(x, \bar{x})^{1+\theta}.
    \end{equation}
    Let us now take any $x\in\B_{\M}(\bar{x}, \varepsilon)$ where $\varepsilon=\min(\varepsilon_1,\varepsilon_2,(\varepsilon_1 / q_2)^{\frac{1}{1+\theta}},(q_2 q_1)^{-\frac{1}{\theta}})$: \\
    \noindent (\emph{i}) Since $x\in\B_{\M}(\bar{x}, \varepsilon_2)$, the manifold update \eqref{proof:manupdateEstimate} yields
    \begin{equation*}
        \distM(\manupdate_{\M}(x), \bar{x}) \le q_2 ~ \distM(x, \bar{x})^{1+\theta} \le q_2 ~ \varepsilon^{1+\theta} \le \varepsilon_1.
    \end{equation*}
    \noindent (\emph{ii}) As $\manupdate_{\M}(x)$ lies in $\B_{\M}(\bar{x}, \varepsilon_1)$, the proximal gradient update \eqref{proof:PGopEstimate} applied to $y = \manupdate_{\M}(x)$ gives
    \begin{multline}\label{eq:superlinear}
        \distM(\PGop(\manupdate_{\M}(x)), \bar{x}) \le    q_1 \distM(\manupdate_{\M}(x), \bar{x}) \\ \le q_1 q_2 ~ \distM(x, \bar{x})^{1+\theta} \le q_1 q_2 ~ \varepsilon^\theta \distM(x, \bar{x}).
    \end{multline}
    Since $q_2 q_1 \varepsilon^\theta \le 1$  by construction, this gives %
    \begin{equation}\label{eq:manstable}
        \distM(\PGop(\manupdate_{\M}(x)), \bar{x}) \le \distM(x, \bar{x}) \qquad \text{for any $x\in\B_{\M}(\bar{x}, \varepsilon)$}.
    \end{equation}
    We have thus proved the existence of a neighborhood $\B_\M(\bar{x}, \varepsilon)$ of $\bar{x}$ in $\M$ which is stable for an iteration of \cref{alg:generic} and over which one iteration has a superlinear improvement of order $1+\theta$ (by \eqref{eq:superlinear}).

    Finally, since $\bar{x}$ is a limit point of $(y_k)$, there exists $K<\infty$ such that $y_K\in\B(\bar{x},(1-\delta)\varepsilon/C)$. Besides, \eqref{eq:EuctoRie} tells us that $ \distM(\PGop(y_K), \bar{x})\le \varepsilon$ and thus $x_k$ and $y_k$ belong to $\B_{\M}(\bar{x}, \varepsilon)$ for all $k>K$ by \eqref{eq:manstable}. %
    We conclude that $x_{k+1}=\PGop(y_k)\in\M$ for all $k\geq K$, and, using \eqref{eq:superlinear}, that we have \eqref{eq:super} with $c= q_1 q_2$, for all $k>K$. 
\end{proof}

\section{Newton acceleration}\label{sec:specificationsManUp}

In this section, we investigate the possibilities of manifold acceleration %
within \cref{alg:generic}.
We show in Sections \ref{sec:newt} and \ref{sec:truncnewt} how to use Riemannian (truncated) Newton accelerations within our framework and derive superlinear/quadratic convergence guarantees. %
\revise{A technical difficulty to ensure global convergence when interlacing proximal gradient updates with Riemannian Newton accelerations is to guarantee some functional decrease. Thus, we first study in Section\;\ref{sec:linesearch} the use of \revise{line search} for $\manupdate_\M$  in our context.}

\vspace*{-1ex}
\subsection{Ensuring functional descent while preserving local rates: \revise{line search}}
\label{sec:linesearch}

We use in the following convergence proofs three properties of $\manupdate_\M$: it should produce an update that lives on $\M$, enjoy a superlinear local convergence rate, and not degrade function value. For this last point, we consider a simple \revise{line search} and we prove that, under mild assumptions,
it helps to find a point which decreases function value, and retains the favorable local properties. Surprisingly, this result does not appear in the standard references on Riemannian optimization. We provide here the necessary developments inspired from the classical monograph\;\cite{dennis1996numerical}.

Standing at point $x\in\M$ with a proposed direction $\eta\in\tangentM$, a stepsize $\alpha>0$ is \emph{acceptable} \revise{if} it satisfies the following \emph{Armijo} condition %
\begin{equation}\label{eq:Armijo}
    \funtot(\R_x(\alpha \eta)) \le \funtot(x) + m_1 \alpha \langle \grad \funtot(x), \eta\rangle, \qquad\text{for $0 < m_1 < 1/2$.}
\end{equation}
The line search employs a second-order retraction $\R_x$, \revisebis{e.g.\;the exponential map, a projection retraction \cite{Absil2012ProjectionlikeRO}, or any other second-order retraction \cite{boumal2022intromanifolds}.\footnote{\revisebis{Indeed, in many applications of Riemannian optimization, computing geodesics and the exponential map can be costly and then retractions provide an efficient alternative. For this reason, we consider here second-order retractions \cite{absil2009optimization,boumal2022intromanifolds}.}}} %
The conditions under which stepsizes satisfying the Armijo rule exist are discussed in \cite[Sec 6.3]{dennis1996numerical}, the following lemma can then be derived.

\begin{lemma}\label{lemma:LS_exists_valid_stepsizes}
    Let \cref{asm:fun} hold and consider a manifold $\M$ equipped with a retraction $\R$ and a pair $(x, \eta)\in\tangentBundle$. If $\funtot$ is differentiable on $\M$ at $x$, $\langle \grad \funtot (x), \eta\rangle <0$, and $m_1<1$, then there exists $\hat\alpha>0$ such that any step size $\alpha\in(0, \hat\alpha)$ is acceptable by the Armijo rule \eqref{eq:Armijo}.
\end{lemma}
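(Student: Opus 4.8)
The plan is to reduce the statement to a one-dimensional analysis along the retraction curve and then invoke the first-order Taylor development recalled in \cref{sec:RiemannianRecalls}. Concretely, I would introduce the scalar function $\phi(\alpha) \triangleq \funtot(\R_x(\alpha\eta))$ and observe that the curve $\smoothcurve(t) = \R_x(t\eta)$ is a smooth curve on $\M$ with $\smoothcurve(0) = x$ and, by the defining property $\D\R_x(0) = \mathrm{Id}$ of a retraction, velocity $\smoothcurve'(0) = \eta$. This reduces the problem to controlling $\phi$ near $0$.

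Since $\funtot$ is differentiable on $\M$ at $x$, the Riemannian gradient $\grad\funtot(x)$ exists, and the first-order Taylor development along $\smoothcurve$ gives, for $\alpha$ near $0$,
\[
\phi(\alpha) = \funtot(x) + \alpha \langle \grad\funtot(x), \eta\rangle + o(\alpha).
\]
Subtracting the right-hand side of the Armijo condition \eqref{eq:Armijo}, the inequality to be satisfied is equivalent to
\[
(1-m_1)\,\alpha\,\langle \grad\funtot(x), \eta\rangle + o(\alpha) \le 0,
\]
and dividing by $\alpha > 0$ yields the scalar condition $(1-m_1)\langle \grad\funtot(x), \eta\rangle + o(\alpha)/\alpha \le 0$.

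Then I would pass to the limit $\alpha \to 0^+$: because $m_1 < 1$ and $\langle \grad\funtot(x), \eta\rangle < 0$ by hypothesis, the leading coefficient $(1-m_1)\langle \grad\funtot(x),\eta\rangle$ is strictly negative, while $o(\alpha)/\alpha \to 0$. Hence the left-hand side is strictly negative for all $\alpha$ small enough, which furnishes a threshold $\hat\alpha > 0$ such that every $\alpha\in(0,\hat\alpha)$ satisfies \eqref{eq:Armijo}.

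The argument is essentially the classical Euclidean one of \cite[Sec.\;6.3]{dennis1996numerical} transported to the manifold, so I do not expect a genuine obstacle. Notably, the second-order nature of $\R$ plays no role here — only the first-order identity $\D\R_x(0)[\eta] = \eta$ and the existence of $\grad\funtot(x)$ are used. The single point deserving care is that the Taylor development above holds under mere differentiability of $\funtot$ on $\M$ (rather than $\C^2$ smoothness); this is exactly what the first-order development recalled earlier provides, so the estimate and the passage to the limit are legitimate.
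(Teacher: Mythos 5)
Your proof is correct, and its core is the same as the paper's: both expand $\funtot\circ\R_x(\alpha\eta)$ to first order at $\alpha=0$, using only $\D\R_x(0)=\mathrm{Id}$ and the existence of $\grad\funtot(x)$, and conclude that the strictly negative term $(1-m_1)\,\alpha\,\langle\grad\funtot(x),\eta\rangle$ dominates the $o(\alpha)$ remainder for small $\alpha$. The one divergence is in how $\hat\alpha$ is produced: you take any threshold below which the limit argument applies, whereas the paper adds a second step, invoking the boundedness below of $\funtot$ (\cref{asm:fun}, item iv)) to define $\hat\alpha$ as the \emph{smallest} stepsize at which the Armijo inequality \eqref{eq:Armijo} becomes an equality, i.e.\ the first crossing of the curve $\alpha\mapsto\funtot(\R_x(\alpha\eta))$ with the Armijo line. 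That extra step yields a maximal interval of acceptable stepsizes (natural when analyzing backtracking), but it is not needed for the statement as written, and it implicitly relies on continuity of $\alpha\mapsto\funtot(\R_x(\alpha\eta))$ away from $0$ --- something the lemma's hypotheses (differentiability only at $x$, with $g$ merely lower semicontinuous) do not literally provide. Your version proves exactly what the lemma claims from exactly the stated hypotheses, so there is no gap; if anything, it is the more economical of the two arguments.
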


\begin{proof}
    We adapt a part of the proof of  \cite[Th. 6.3.2]{dennis1996numerical} for the Armijo rule and the Riemannian setting. Since $m_1<1/2$, for any $\alpha$ sufficiently small there holds
    \begin{equation*}
        \funtot\circ \R_x(\alpha \eta) \le \funtot\circ\R_x(0) + m_1 \D\left(\funtot\circ\R_x \right)(0)[\alpha\eta] = \funtot(x) + m_1 \alpha \langle \grad \funtot(x), \eta\rangle.
    \end{equation*}
    Since $\funtot$ is bounded below, there exists a smallest $\hat{\alpha}$ such that $\funtot(\R_x(\hat\alpha \eta)) = \funtot(x) + m_1 \hat\alpha \langle \grad \funtot(x), \eta\rangle$. Thus all stepsizes in $(0, \hat\alpha)$ are acceptable by\;\eqref{eq:Armijo}.
\end{proof}

In addition, a \revise{line search} performed near a minimizer with a Newton direction
should accept the unit stepsize, so that a full step may be taken. This is the case when the Riemannian Hessian around this minimizer is positive definite as stated by the next lemma, \revise{which is a direct corollary of \cref{th:unitstepsuperlinearcv}.}
\begin{lemma}\label{lemma:LS_unit_stepsize}
    Let \cref{asm:fun} hold and consider a manifold $\M$ equipped with a retraction $\R$, a point $x^\star\in\M$ and a pair $(x, \eta)\in\tangentBundle$. Assume that $\funtot$ is twice differentiable on $\M$ near a strong local minimizer  $x^\star$  on $\M$, that is $\Hess \funman(x^\star)$ is positive definite.
    If the direction $\eta$ brings a superlinear improvement towards $x^\star$, that is $\distM(\R_x(\eta), x^\star) = o(\distM(x, x^\star))$ as $x\to x^\star$, and $0<m_1<1/2$, then $\eta$ is acceptable by the Armijo rule\;\eqref{eq:Armijo} with unit stepsize $\alpha=1$.
\end{lemma}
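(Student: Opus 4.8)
The plan is to show that the unit-step Armijo inequality $\funtot(\R_x(\eta)) \le \funtot(x) + m_1\langle \grad \funtot(x),\eta\rangle$ holds for every $x\in\M$ close enough to $x^\star$. Since $x^\star$ is a strong local minimizer, $\grad \funtot(x^\star)=0$ and $H:=\Hess \funtot(x^\star)\succeq \mu I$ on $\tangentM[x^\star]$ for some $\mu>0$. First I would insert the second-order retraction development \eqref{eq:manifoldSecondOrderDevSecRetraction} (with $t=1$) into the Armijo inequality: writing $d(x)=\distM(x,x^\star)$, the claim becomes
\begin{equation*}
  (1-m_1)\,\langle \grad \funtot(x),\eta\rangle + \tfrac12\langle \Hess \funtot(x)[\eta],\eta\rangle + o(\|\eta\|^2)\le 0. \tag{$\star$}
\end{equation*}
The whole point is then to control the first-order and curvature terms using only the superlinear hypothesis $\distM(\R_x(\eta),x^\star)=o(d(x))$.

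Next I would transport everything to $\tangentM[x^\star]$ through the ambient embedding ($\M\subset\bbR^n$ or $\bbR^{m\times n}$). Using $\R_x(v)=x+v+O(\|v\|^2)$, write $x=x^\star+u+O(\|u\|^2)$ and $x^+:=\R_x(\eta)=x^\star+u^++O(\|u^+\|^2)$ with $u,u^+\in\tangentM[x^\star]$; since these coordinates are a near-isometry at the origin, $\|u\|=(1+o(1))d(x)$ and, by hypothesis, $\|u^+\|=(1+o(1))d(x^+)=o(\|u\|)$. Matching the two ambient expressions for $x^+$ gives $\eta=-Pu+o(\|u\|)$, where $P$ denotes the (leading-order isometric) transport onto $\tangentM$, and in particular $\|\eta\|=(1+o(1))\|u\|$. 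Two estimates follow. Continuity of the Riemannian Hessian yields $\langle \Hess \funtot(x)[\eta],\eta\rangle=\langle Hu,u\rangle+o(\|u\|^2)$. Moreover, the critical-point expansion $\funtot(z)-\funtot(x^\star)=\tfrac12\langle Hw,w\rangle+o(\|w\|^2)$ applied at $z=x$ (coordinate $u$) and $z=x^+$ (coordinate $u^+$), together with \eqref{eq:manifoldSecondOrderDevSecRetraction} for $\funtot(x^+)-\funtot(x)$, gives after subtraction $\langle \grad \funtot(x),\eta\rangle=-\langle Hu,u\rangle+o(\|u\|^2)$ (in passing this shows $\langle\grad\funtot(x),\eta\rangle<0$, so $\eta$ is genuinely a descent direction). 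Substituting both estimates into $(\star)$ collapses it to
\begin{equation*}
  -\bigl(\tfrac12-m_1\bigr)\langle Hu,u\rangle+o(\|u\|^2)\le -\bigl(\tfrac12-m_1\bigr)\mu\|u\|^2+o(\|u\|^2),
\end{equation*}
which is strictly negative for $x\neq x^\star$ close enough to $x^\star$, because $m_1<1/2$ and $\langle Hu,u\rangle\ge\mu\|u\|^2$; the case $x=x^\star$ is trivial. This proves $(\star)$, hence acceptance of the unit step.

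The main obstacle is the transport bookkeeping in the second step: justifying $\eta=-Pu+o(\|u\|)$ and the Hessian-continuity estimate rigorously, while keeping track that $d(x)$, $\|u\|$ and $\|\eta\|$ are mutually comparable so that every $o(\cdot)$ is measured against the single quantity $\|u\|^2$. A minor point worth checking is the retraction order: if $\R$ were only first-order, the extra term $\tfrac12\langle\grad \funtot(x),c''(0)\rangle$ in the development is $O(d(x))\,O(\|\eta\|^2)=o(\|\eta\|^2)$ since $\grad\funtot(x)\to 0$, hence harmless; with the second-order retraction used by the line search it vanishes outright.
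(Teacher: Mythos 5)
Your proof is correct in substance and rests on the same underlying idea as the paper's (an asymptotic second\-/order comparison near the strong minimizer, with the superlinear hypothesis killing every term attached to $x^{+}$), but the technical implementation is genuinely different. The paper proves this lemma as a corollary of \cref{th:unitstepsuperlinearcv}, whose proof works in normal coordinates $\widehat{x}=\log_{x^\star}(x)$, obtains the key estimate $\langle\grad\funtot(x),\eta\rangle=-\Hess\funtot(x^\star)[\widehat{x}-\widehat{x^\star}]^{2}+o(\distM(x,x^\star)^{2})$ by Taylor\-/expanding the \emph{differential} $\D\widehat{\funtot}$ around $\widehat{x^\star}$, and concludes by comparing expansions of $\funtot(x)$ and $\funtot(\R_x(\eta))$ around $x^\star$, using the extreme eigenvalues $l,L$ of $\Hess\funtot(x^\star)$ and an $\varepsilon$ with $\tfrac12 L\varepsilon^{2}<(\tfrac12-m_1)l$. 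You instead (i) reduce the unit-step Armijo inequality to the single inequality $(\star)$ via the retraction development at $x$, (ii) work in ambient projection coordinates $u=\proj_{x^\star}(x-x^\star)$ --- exactly what the paper's \cref{prop:tangretraction,lemma:Euclidean_to_Riemannian} make rigorous in its embedded setting --- and (iii) recover the gradient estimate $\langle\grad\funtot(x),\eta\rangle=-\langle Hu,u\rangle+o(\|u\|^{2})$ by subtracting function\-/value expansions and invoking continuity of the Hessian field. Your route avoids the exponential map entirely and makes the role of $m_1<1/2$ transparent in one line; the paper's route never evaluates $\Hess\funtot$ at points other than $x^\star$, which is the price of your shortcut: step (iii), and the uniformity in $x$ of the $o(\|\eta\|^{2})$ remainder when you set $t=1$ in \eqref{eq:manifoldSecondOrderDevSecRetraction}, silently upgrade ``twice differentiable'' to ``twice continuously differentiable''. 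This is a fair trade rather than a gap: the paper's own proof makes comparable implicit regularity assumptions (locally Lipschitz derivative of $\widehat{\funtot}$), and both arguments share the implicit assumption that $\|\eta\|\to 0$ (the paper writes $\|\eta\|=\|\R_x^{-1}(x_+)\|$, which already presumes $\eta$ lies in the domain of the local inverse). Your bookkeeping keeping all $o(\cdot)$ terms measured against the single scale $\|u\|\asymp\distM(x,x^\star)\asymp\|\eta\|$ is sound, so the argument goes through.
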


In the following, we will consider a \emph{backtracking} \revise{line search} for finding an acceptable stepsize $\alpha$: the unit stepsize is first tried, and then the search space is reduced geometrically. %
In practice, we use exactly \cite[Alg. A6.3.1]{dennis1996numerical}, which features polynomial interpolation of $\funman$ in the search space.

\vspace*{-1ex}
\subsection{Riemannian Newton \& quadratic convergence}
\label{sec:newt}

We construct a manifold update based on the Riemannian Newton method \cite[Chap.\;6]{absil2009optimization}, which is the simplest method with a local quadratic convergence. It consists in finding $d\in\tangentM$ that minimizes the second order model \eqref{eq:manifoldSecondOrderDevSecRetraction} of $\funtot$ at point $x\in\M$, or equivalently that solves Newton equation; see \cite[Sec. 6.2]{boumal2022intromanifolds}.

\begin{algorithm}
    \caption{$\manupdate$-Newton\label{alg:ManUpNewton}}
    \begin{algorithmic}[1]
        \Require Manifold $\M$, point $x\in\M$
        \State Find $d$ in $\tangentM$ that solves
        \begin{equation*}\tag{Newton equation}\label{eq:Newtoneq}
            \grad F(x) + \Hess F(x)[d] = 0
        \end{equation*}
        \State Find $\alpha$ satisfying the Armijo condition \eqref{eq:Armijo} with direction $d$
        \State \Return $y = R_x(\alpha d)$
    \end{algorithmic}
\end{algorithm}

\begin{theorem}\label{th:Newton}
    Let \cref{asm:fun} hold and take $\gamma\in(0, 1/L)$. Consider the sequence of iterates $(x_k)$ generated by \cref{alg:generic} equipped with the Riemannian Newton manifold update (\cref{alg:ManUpNewton}). If $~ \Hess \funtot(x_k)$ is positive definite at each step, then all limit points of $(x_k)$ are critical points of $\funtot$ and share the same functional value.

    Furthermore, assume that the sequence $(y_k)$ admits %
    a limit point $x^\star$ such\;that
    \begin{itemize}
        \item[i)] $x^\star\in\M$ is a $r$-structured critical point for $(f,g)$  with $r<1/\gamma$;
        \item[ii)] $\Hess_{\M} \funtot(x^\star)\succ 0$ and  $\Hess_{\M} \funtot$ is locally Lipschitz around $x^\star$.
    \end{itemize}
    Then, after some finite time,
    \begin{itemize}
        \item[a)] the sequence $(x_k)$ lies on $\M$;
        \item[b)] $x_k$ converges to $x^\star$ quadratically{: for large $k$, there exists $c>0$ such that}
            \begin{equation*}
                \distM(x_{k+1}, x^\star) \le {c ~} \distM(x_k, x^\star)^2.
            \end{equation*}
    \end{itemize}
\end{theorem}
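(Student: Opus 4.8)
The theorem has two parts: a global statement (all limit points are critical with shared functional value) and a local statement (identification plus quadratic convergence). I would treat them separately, deriving the local part as a near-immediate consequence of \cref{th:identiflocalcv_sperlin} once I verify that the Riemannian Newton update satisfies the superlinear-rate hypothesis there. The global part is where the real work lies, since it must cope with the interlacing of proximal gradient and Newton steps together with the line search machinery of Section\;\ref{sec:linesearch}.

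\textbf{Local part (a)--(b).} The plan is to invoke \cref{th:identiflocalcv_sperlin} with $\manupdate = \manupdate$-Newton. Its hypothesis i) is exactly hypothesis i) here. For hypothesis ii), I must show that $\manupdate$-Newton has a local superlinear (indeed quadratic, $\theta=1$) convergence rate near $x^\star$ in $\M$. This is the classical local quadratic convergence of the Riemannian Newton method \cite[Chap.\;6]{absil2009optimization}: since $\Hess_\M \funtot(x^\star)\succ 0$ and $\Hess_\M \funtot$ is locally Lipschitz, the pure Newton step gives $\distM(\R_x(d), x^\star) = O(\distM(x, x^\star)^2)$. The one extra point to check is that the line search does not spoil this: by \cref{lemma:LS_unit_stepsize}, the unit stepsize $\alpha=1$ is accepted by the Armijo rule in a neighborhood of $x^\star$ because $\eta=d$ brings a superlinear improvement towards the strong minimizer $x^\star$. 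Hence $\manupdate$-Newton coincides with the full Newton step near $x^\star$ and inherits its quadratic rate, so \cref{th:identiflocalcv_sperlin} applies with $1+\theta=2$ and yields both finite-time identification (a) and the quadratic estimate (b).

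\textbf{Global part.} Here the plan is to show a monotone functional decrease along the whole algorithm and then pass to limit points. First, the proximal gradient step with $\gamma<1/L$ gives the standard sufficient-decrease inequality $\funtot(x_{k+1}) \le \funtot(y_k) - (\tfrac{1}{2\gamma}-\tfrac{L}{2})\|x_{k+1}-y_k\|^2$, so $F(x_{k+1})\le F(y_k)$. Second, the Newton update produces $y_k = \R_{x_k}(\alpha_k d_k)$ where $d_k$ solves the Newton equation; since $\Hess \funtot(x_k)\succ 0$, we have $\langle \grad \funtot(x_k), d_k\rangle = -\langle d_k, \Hess \funtot(x_k)[d_k]\rangle < 0$, so $d_k$ is a descent direction and \cref{lemma:LS_exists_valid_stepsizes} guarantees an acceptable Armijo stepsize exists; the Armijo condition \eqref{eq:Armijo} then gives $\funtot(y_k)\le \funtot(x_k)$. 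Chaining the two yields $\funtot(x_{k+1})\le\funtot(x_k)$, a nonincreasing sequence bounded below by \cref{asm:fun}(iv), hence convergent to some $F^\star$; this already forces all limit points to share the functional value $F^\star$. The remaining task is to show each limit point is critical. For a limit point $\bar x$ with $x_{k_j}\to\bar x$, the summability $\sum_k \|x_{k+1}-y_k\|^2 < \infty$ forces $\|x_{k+1}-y_k\|\to 0$, and the proximal gradient optimality condition $\tfrac{1}{\gamma}(y_k - x_{k+1}) - \nabla f(y_k) \in \partial g(x_{k+1})$ combined with outer semicontinuity of $\partial g$ and $\nabla f$ (passing to the limit along the subsequence, using that $y_{k_j-1}\to\bar x$ as well) gives $-\nabla f(\bar x)\in\partial g(\bar x)$, i.e. $0\in\partial F(\bar x)$.

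\textbf{Main obstacle.} The delicate point is the limit-point argument in the global part, specifically controlling the Newton/line-search leg $x_k \mapsto y_k$. Functional decrease alone does not obviously drive $\|y_k - x_k\|\to 0$, and one must be careful that a limit point of $(y_k)$ (which is what hypothesis i) of the local part concerns) relates properly to a limit point of $(x_k)$. The cleanest route is to argue that along any convergent subsequence the Armijo decrease $\funtot(x_k)-\funtot(y_k)\to 0$ together with the gradient-related nature of the Newton direction controls $\langle \grad \funtot(x_k), d_k\rangle\to 0$, and then to reconcile the two subsequences $(x_k)$ and $(y_k)$; handling nonconvexity of $g$ here (so that $\partial g$ is only the limiting subdifferential) is precisely where the ``careful attention to technical details induced by nonconvexity'' mentioned in the introduction is needed.
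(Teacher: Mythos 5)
Your proposal is correct and follows essentially the same route as the paper: descent of both the Armijo--Newton leg and the proximal gradient leg gives a convergent, non-increasing sequence $(F(x_k))$, summability of $\|x_{k+1}-y_k\|^2$ plus the prox optimality condition and outer semicontinuity of $\partial g$ gives criticality of limit points, and the local part is exactly the paper's application of \cref{th:identiflocalcv_sperlin} via the classical Riemannian Newton rate together with \cref{lemma:LS_unit_stepsize} for unit-step acceptance. The concern in your final paragraph is unnecessary: since the global claim concerns limit points of $(x_k)$ only, the bound $\dist(0,\partial F(x_{k+1})) \le \frac{1+\gamma L}{\gamma}\|x_{k+1}-y_k\|$ (equivalently, your limit-passing in the prox inclusion, noting $y_{k_j-1}\to\bar x$) already closes the argument without any control of $\|y_k-x_k\|$.
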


\begin{proof}
    As the Riemannian Hessian is assumed to be positive definite, Newton's direction is a descent direction:
    \begin{equation*}
        \langle \grad \funtot(x_k), d_k \rangle = -\langle \grad \funtot(x_k), \Hess \funtot(x_k)^{-1} \grad \funtot(x_k) \rangle < 0.
    \end{equation*}
    The Riemannian Newton manifold step is therefore well-defined, and the \revise{line search} terminates by \cref{lemma:LS_exists_valid_stepsizes}, so that the manifold update is well-defined and provides descent \revise{($F(y_{k})\le F(x_{k})$)}.

    \revise{Now, since the proximal gradient update provides a descent (see \cite[Lem. 10.4]{beck2017first}),
    \begin{equation}\label{eq:proofglobalalg_descent}
        F(x_{k+1}) \le F(y_k) - \frac{1-\gamma L}{2\gamma} \| x_{k+1}-y_k \|^2 \le F(x_k) - \frac{1-\gamma L}{2\gamma} \| x_{k+1}-y_k \|^2.
    \end{equation}
    The sequence $(F(x_k))$ is thus non-increasing and lower-bounded, therefore it converges.
    Besides, any accumulation point of $(x_k)$ is a critical point of $F$. Indeed, summing equation \eqref{eq:proofglobalalg_descent} for $k=1, \ldots, n$ yields:
    \begin{equation*}
        \frac{1-\gamma L}{2\gamma} \sum_{k=1}^n \| x_{k+1}-y_k \|^2 \le F(x_1) - F(x_{n+1}) \le F(x_1)-\inf F < +\infty.
    \end{equation*}
    Since $ \dist (0, \partial F(x_{k+1})) \le \frac{L\gamma + 1}{\gamma}\|x_{k+1}-y_k\|$ (see e.g. the proof of\;\cite[Prop.\;13]{bolte2015error}), we have that 
    $\dist (0, \partial F(x_{k+1}))$
    converges to $0$. The outer-semi continuity of the limiting subdifferential then yields criticality of accumulation points.}

    Now we apply the local convergence of Riemannian Newton \cite[Th. 6.3.2]{absil2009optimization}:
    assumption ii) ensures that
    the Riemannian Newton direction $d$ computed in step\;1 of\;\cref{alg:ManUpNewton} provides a quadratic improvement %
    on a neighborhood of $x^\star$ on\;$\M$. Moreover, the \revise{line search} returns the unit-stepsize after some finite time: $\alpha=1$ is tried first, and is acceptable for directions providing superlinear improvement by \cref{lemma:LS_unit_stepsize}. Thus the whole Riemannian Newton update provides quadratic improvement after some finite time. Using this and assumption i), \cref{th:identiflocalcv_sperlin} applies and yields the results.
\end{proof}

This theorem states that alternating proximal gradient steps and Riemannian Newton steps %
converges quadratically %
to structured points with virtually the same assumptions the Euclidean Newton method. 
However, the two standard issues of Newton's method 
still hold in our setting: at each iteration, a linear system has to be solved to produce the Newton direction; and this direction does not always provide descent (without positive definiteness of the Hessian).
We show in the next section that %
truncated versions %
overcome these issues also in our framework.

\vspace*{-1ex}
\subsection{Riemannian Truncated Newton \& superlinear convergence}
\label{sec:truncnewt}

We consider a manifold update based on a truncated Newton procedure \cite{dembo1983truncated}. %
(Riemannian) Truncated Newton consists in solving \eqref{eq:Newtoneq} partially by using a (Riemannian) conjugate gradient procedure so that whenever the resolution of \eqref{eq:Newtoneq} is stopped, the resulting direction provides descent on the function. %
The quality of the truncated Newton direction is controlled by a parameter $\eta \in [0, 1)$ which bounds the ratio of residual and gradient norms:
\begin{equation}\tag{Inexact Newton eq.}\label{eq:inexactNewtonEquation}
    \| \grad F(x) + \Hess F(x)[d] \| \le \eta \| \grad F(x) \|.
\end{equation}

\vspace*{-1ex}
\begin{algorithm}
    \caption{$\manupdate$-Newton-CG\label{alg:ManUpTruncatedNewton}}
    \begin{algorithmic}[1]
        \Require Manifold $\M$, point $x\in\M$, convergence defining parameter $\theta\in(0,1]$
        \State Let $\eta = \|\grad F(x)\|^{\theta}$
        \State Find $d$ that solves \eqref{eq:inexactNewtonEquation}
        \State Find $\alpha$ satisfying the Armijo condition \eqref{eq:Armijo} with direction $d$
        \State \Return $y = R_x(\alpha d)$
    \end{algorithmic}
\end{algorithm}
\vspace*{-1ex}

\begin{theorem}
    Let \cref{asm:fun} hold and take $\gamma\in(0,1/L)$. Consider the sequence of iterates $(x_k)$ generated by \cref{alg:generic} equipped with the Riemannian Truncated Newton manifold update (\cref{alg:ManUpTruncatedNewton}). Then all limit points of $(x_k)$ are critical points of $\funtot$ and share the same function value.

    Furthermore, assume that sequence $(y_k)$ admits %
    a limit point $x^\star$ such\;that
    \begin{itemize}
        \item[i)] $x^\star\in\M$ is a $r$-structured critical point for $(f,g)$ with $r<1/\gamma$;
        \item[ii)] $\Hess_{\M} \funtot(x^\star)\succ 0$ and  $\Hess_{\M} \funtot$ is locally Lipschitz around $x^\star$.
        \item[iii)] we take $\eta_k = \mathcal O(\|\grad F(x_k)\|^\theta)$, for some $\theta \in (0, 1]$.
    \end{itemize}
    Then, for $k$ large enough, the full sequence $(x_k)$ lies on $\M$, and $x_k$ converges to $x^\star$ superlinearly with order\;$1+\theta$: for large\;$k$, there exist\;$c>0$,
    \begin{equation*}
        \distM(x_{k+1}, x^\star) \le c ~ \distM(x_{k}, x^\star)^{1+\theta}.
    \end{equation*}
\end{theorem}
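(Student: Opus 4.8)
First I would establish the global statement by mirroring the opening of the proof of \cref{th:Newton}. The conjugate-gradient resolution of \eqref{eq:inexactNewtonEquation} returns a descent direction $d_k$ whenever it is stopped, so $\langle \grad \funtot(x_k), d_k\rangle < 0$ and the backtracking line search terminates by \cref{lemma:LS_exists_valid_stepsizes}; the manifold update therefore produces functional descent $\funtot(y_k)\le \funtot(x_k)$. Composing this with the proximal-gradient descent inequality \cite[Lem. 10.4]{beck2017first}, exactly as in \cref{th:Newton}, I would obtain a non-increasing and lower-bounded, hence convergent, sequence $(\funtot(x_k))$, together with a summable series $\sum_k \|x_{k+1}-y_k\|^2$ and therefore $\|x_{k+1}-y_k\|\to 0$. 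The estimate $\dist(0,\partial \funtot(x_{k+1}))\le \frac{L\gamma+1}{\gamma}\|x_{k+1}-y_k\|$ combined with outer semicontinuity of $\partial \funtot$ then gives criticality of every accumulation point, all sharing the common value $\lim_k \funtot(x_k)$.

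For the local rate the key is to show that near $x^\star$ the manifold update $\manupdate_\M$ of \cref{alg:ManUpTruncatedNewton} converges with order $1+\theta$ on $\M$, after which \cref{th:identiflocalcv_sperlin} delivers conclusions a) and b) verbatim. By assumption ii), $x^\star$ is a strong minimizer on $\M$, so on a neighborhood the Riemannian gradient norm is equivalent to the distance to the minimizer, $\|\grad \funtot(x)\| \asymp \distM(x, x^\star)$; consequently the forcing term satisfies $\eta_k = \mathcal O(\|\grad \funtot(x_k)\|^\theta) = \mathcal O(\distM(x_k, x^\star)^\theta)$. I would then establish a Riemannian inexact-Newton estimate, the manifold counterpart of the classical result of \cite{dembo1983truncated}, of the form
\begin{equation*}
    \distM(\R_{x}(d), x^\star) \le c_1\, \distM(x, x^\star)^2 + c_2\, \eta_k\, \distM(x, x^\star) = \mathcal O\!\left(\distM(x, x^\star)^{1+\theta}\right),
\end{equation*}
where the first term is the exact-Newton quadratic contribution (using the second-order retraction expansion \eqref{eq:manifoldSecondOrderDevSecRetraction} and the local Lipschitzness of $\Hess_\M \funtot$), the second term is the error injected by the residual in \eqref{eq:inexactNewtonEquation}, and the last equality uses $\theta\le 1$. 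Since $d$ then produces a superlinear improvement and $\Hess_\M \funtot(x^\star)\succ 0$, \cref{lemma:LS_unit_stepsize} guarantees that the line search accepts the unit stepsize after finitely many iterations, so $\manupdate_\M$ indeed has order-$(1+\theta)$ convergence near $x^\star$ on $\M$.

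With this rate in hand, assumption i) and the superlinear convergence of $\manupdate_\M$ are exactly the hypotheses of \cref{th:identiflocalcv_sperlin}, which I invoke to conclude that the full sequence $(x_k)$ eventually lies on $\M$ and converges to $x^\star$ with order $1+\theta$. The main obstacle is the middle step: proving the Riemannian inexact-Newton estimate rigorously. The delicate points will be converting the residual control, which is phrased in terms of $\|\grad \funtot\|$, into a bound on $\distM(\cdot, x^\star)$ through the gradient–distance equivalence, and controlling the curvature and retraction correction terms so that the second-order expansion \eqref{eq:manifoldSecondOrderDevSecRetraction} faithfully reproduces the Euclidean inexact-Newton analysis without introducing spurious lower-order errors. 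Everything else reuses machinery already assembled for \cref{th:Newton,th:identiflocalcv_sperlin}.
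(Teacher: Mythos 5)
Your proposal is correct and follows the same architecture as the paper's proof: the global statement via descent of the truncated-Newton direction (\cref{lemma:RiemannianNewtonCG_descent}), termination of the line search (\cref{lemma:LS_exists_valid_stepsizes}), and then the argument of \cref{th:Newton} for criticality of accumulation points; the local statement by showing the manifold update has order $1+\theta$ near $x^\star$, accepting the unit step via \cref{lemma:LS_unit_stepsize}, and concluding with \cref{th:identiflocalcv_sperlin}. The one place you diverge is the key local estimate: where you propose to re-derive a Riemannian inexact-Newton bound of Dembo--Eisenstat--Steihaug type from scratch (gradient--distance equivalence, second-order retraction expansion, residual term $\eta_k\distM(x_k,x^\star)$), the paper simply invokes the local convergence theorem for the Riemannian truncated Newton method, \cite[Th.~8.2.1]{absil2009optimization}, which is exactly this estimate: assumptions ii) and iii) are its hypotheses. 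Your route is viable and your sketch has the right structure, but it takes on the full technical burden you yourself flag as the ``main obstacle'' (transporting the forcing-term control through charts, handling retraction correction terms), which is precisely the work already packaged in the cited theorem; the paper's citation is therefore the shorter and safer path, while yours would make the proof self-contained at the cost of essentially reproving a known result in an appendix.
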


\begin{proof}
    The direction provided by\;\eqref{eq:inexactNewtonEquation} is a descent direction by \cref{lemma:RiemannianNewtonCG_descent}, the \revise{line search} terminates by \cref{lemma:LS_exists_valid_stepsizes}, so that the %
    updates are well-defined and provide descent. Thus, \revise{as in the proof of \cref{th:Newton} we get that every accumulation point of the iterate sequence is a critical point for $\funtot$.} We can apply now the local convergence of the Riemannian truncated Newton method\;\cite[Th. 8.2.1]{absil2009optimization}: assumptions ii) and iii) ensure that 
    the direction $d$ computed in step\;1 of\;\cref{alg:ManUpTruncatedNewton} provides a local superlinear improvement towards $x^\star$. %
    The end of the proof is the same as the one of the proof of\;\cref{th:Newton}.
\end{proof}

\section{Numerical illustrations}
\label{sec:num}

In this section, we illustrate the effect of Newton acceleration. 
We consider \cref{alg:generic} equipped with either the Newton update of \cref{alg:ManUpNewton}, denoted `Alt. Newton' or the truncated Newton update of \cref{alg:ManUpTruncatedNewton}, denoted `Alt. Truncated Newton'. 
These methods are compared to the Proximal Gradient and the Accelerated Proximal Gradient, which serve as baseline.
The algorithms and problems are implemented in Julia \cite{bezanson2017julia}; experiments may be reproduced using the code available at \url{https://github.com/GillesBareilles/NewtonRiemannAccel-ProxGrad}.

We report the numerical results in figures showing a) the suboptimality $F(x_k)-F(x^\star)$ of the current iterate $x_k$ versus time, and b) the dimension of the current manifold $\M_k\ni x_k$ versus iteration.
We also report a table comparing the algorithms at the first iteration that makes suboptimality lower than tolerances $10^{-3}$ and $10^{-9}$ for various measures summarized in the following table: %

\medskip

    {\noindent
    \footnotesize
    \rowcolors{1}{gray!10}{white}
    \begin{tabular}{p{0.18\textwidth} p{0.75\textwidth}}
       $F(x_k)-F(x^\star)$ & Suboptimality at current iteration. \\
       \#prox.\;grad.\;steps & Number of proximal gradient steps, each involve computing $\nabla f(\cdot)$ and $\prox_{\gamma g}(\cdot)$ once. \\
       \#$\manupdate$ steps & Number of Riemannian steps, each involve computing $\grad F(\cdot)$ once and $\Hess F(\cdot)[\cdot]$ multiple times (one per Conjugate Gradient iteration). \\
       \#$\Hess F(\cdot)[\cdot]$ & Number of Riemannian Hessian-vector products, approximates the effort spent in manifold updates since algorithm started.\\
       \#$f$ & Number of calls to $f(x)$, one per iteration + some for the \revise{line search} + some for the backtracking estimation of the Lipschitz constant.\\
       \#$g$ & Number of calls to $g(x)$, one per iteration + some for the \revise{line search}. \\
    \end{tabular}
    }

\medskip

The proximal gradient updates, present in all methods, include a backtracking procedure that maintains an estimate of the Lipschitz constant of $\nabla f$, so that the proximal gradient \revise{step length} is taken as the inverse of that estimate. The Conjugate Gradient %
 used to solve \eqref{eq:Newtoneq} and \eqref{eq:inexactNewtonEquation} follows \cite[Alg. 6.2]{boumal2022intromanifolds}; it is stopped when the (in)exactness criterion is met, or after 50 iterations for the logistic problem and 150 for the trace-norm one, or when the inner direction $d$ makes the ratio $\langle\Hess F(x_k)[d], d\rangle / \|d\|^2$ small.\footnote{Each CG iteration requires one  Hessian-vector product, avoiding to form the Hessian matrix. \revise{A test on this ratio is used to detect a direction of quasi-negative curvature for the (Riemannian) Hessian, which is a stopping criterion of the Conjugate Gradient. In our implementation, we require this quantity to be smaller than\;$10^{-15}$\;for the Newton method. For the truncated version, we reduce the threshold when getting close to the solution: initialized at $1$, the threshold is decreased %
by a factor $10$ each time the unit-step is accepted by the line search.}} The manifold updates are completed by a backtracking \revise{line search} started from unit stepsize, a direct implementation of \cite[Alg.~6.3.1]{dennis1996numerical}.

\vspace*{-1ex}
\subsection{Two-dimensional nonsmooth example}
\label{sec:exp_2d}

We consider the piecewise quadratic problem of \cite{lewis2019simple}:
\begin{equation*}%
    \min_{x\in\bbR^2} 2x_1^2+x_2^2 + |x_1^2-x_2|.
\end{equation*}
The objective function is partly-smooth relative to the parabola %
$\{x \,:\, x_2=x_1^2\}$, for which an expression for the tangent space, the orthogonal projection on tangent space, a second-order retraction and conversion from Euclidean gradients and Hessian-vector products to Riemannian ones are readily available.

We run the proximal gradient, its accelerated counterpart, and \cref{alg:generic} with the Newton update \cref{alg:ManUpNewton}. The proximal gradient steps of all algorithms have a constant step-size $\gamma=0.05$, all algorithms are started from point $(2, 3)$.%

\begin{figure}[!h]
    \centering
    \begin{subfigure}[t]{0.6\textwidth}
        \centering
        \includegraphics[width=\textwidth]{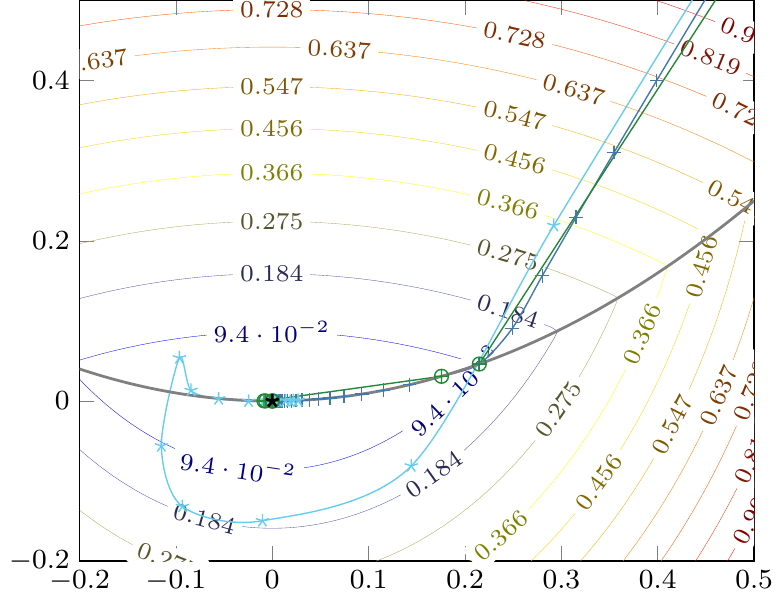}
    \end{subfigure}%
    \begin{subfigure}[t]{0.4\textwidth}
      \centering
      \raisebox{2.5cm}{
        \includegraphics[width=\textwidth]{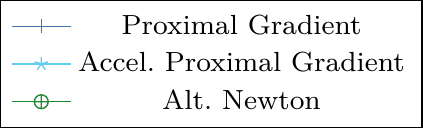}
      }
        \vspace*{-1cm}
    \end{subfigure} \\ \vspace{0.2cm}
    \begin{subfigure}[t]{\textwidth}
        \centering
        \resizebox{\textwidth}{!}{
          \pgfplotstabletypeset{maxquadAL.csv}
        }
    \end{subfigure}
    \caption{nonsmooth example\label{fig:maxquad}}
\end{figure}

\medskip
\noindent\textbf{Observations} The iterates are displayed in \cref{fig:maxquad}. 
The Proximal Gradient iterates reach the parabola in finite time, and then converge linearly on the parabola while the Accelerated Proximal Gradient iterates ``overshoot'' the optimal manifold (see \cite{bareilles2019interplay}). 
The iterates of the Alt. Newton method stay on the parabola and the quadratic convergence behavior appears clearly since two Newton updates bring suboptimality below $10^{-3}$, and one additional step gets it below $10^{-12}$.

\vspace*{-1ex}
\subsection{$\ell_1$-regularized logistic problem} We now turn to the $\ell_1$-regularized logistic problem:
\begin{equation}\label{pb:logisticl1}
    \min_{x\in\bbR^n} \frac{1}{m} \sum_{i=1}^m \log(1+\exp(-y_i  \langle A_i, x \rangle))  + \lambda \|x\|_1,
\end{equation}
where $A\in\bbR^{m\times n}$, $y\in\{-1, 1\}^m$, and $\lambda>0$. The nonsmooth part $g(x)=\lambda\|x\|_1$ is described in Section\;\ref{sec:ex}.

We consider an instance where $n=4000$, $m=400$, $\lambda = 10^{-2}$ and the final manifold has dimension $249$. The coefficients of $A$ are drawn independently following a normal law. 
From a sparse random vector $s$, $y_i$ is set to $1$ with probability $1/(1+\exp(-\langle A_i , s\rangle))$, and $-1$ otherwise.  \revise{All algorithms start from the same point which is the output of $35$ iterations of the accelerated proximal gradient randomly initiated.}

\medskip
\noindent\textbf{Observations}
The experiments are presented in \cref{fig:logit}.
The optimal manifold is identified %
around iteration 200 for all methods except for Proximal Gradient, which needs 1000 iterations.
The two baselines Proximal Gradient and its accelerated version show linear convergence, with a better rate for the non accelerated version once the final manifold is reached. Alt. Truncated Newton shows superlinear acceleration, while Alt. Newton fails to converge in the given time budget.

\begin{figure}
    \centering
    \begin{subfigure}[t]{0.49\textwidth}
        \centering
        \includegraphics[width=\textwidth]{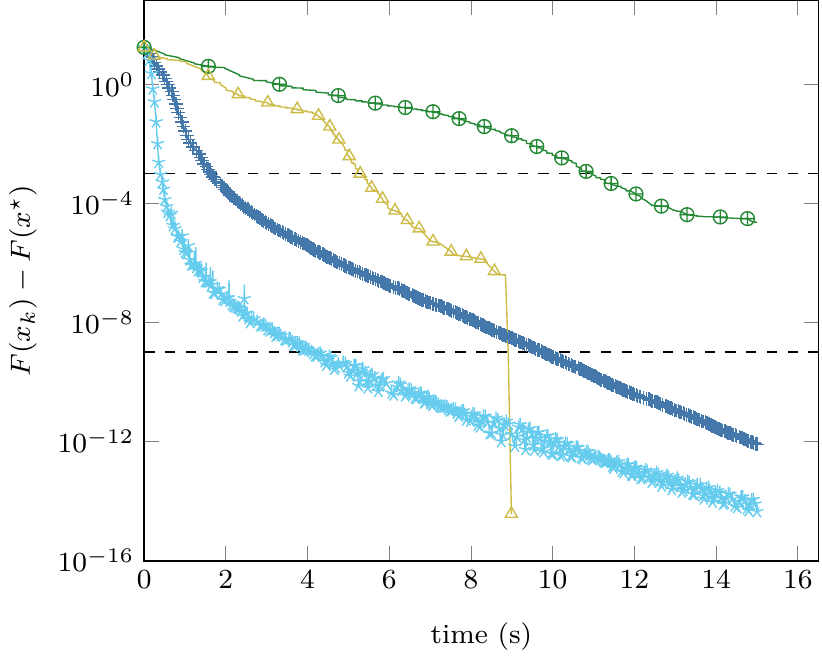}
        \caption{Suboptimality vs time (s)} \label{fig:numexps_logit_subopt}
    \end{subfigure}\hfill
    \begin{subfigure}[t]{0.49\textwidth}
        \centering
        \includegraphics[width=\textwidth]{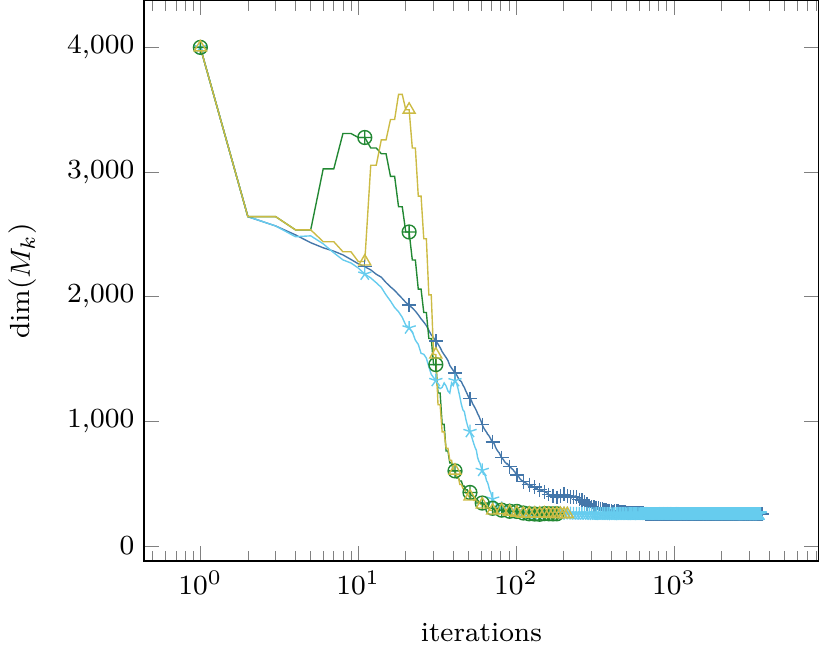}
        \caption{Manifold dimension vs iteration} \label{fig:numexps_logit_struct}
    \end{subfigure} \\ \vspace{0.2cm}
    \begin{subfigure}[t]{\textwidth}
        \includegraphics[width=\textwidth]{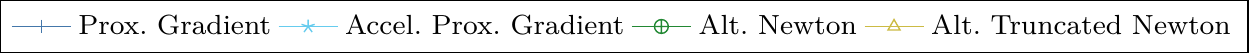}
    \end{subfigure} \\ \vspace{0.2cm}
    \begin{subfigure}[t]{\textwidth}
        \centering
        \resizebox{\textwidth}{!}{\pgfplotstabletypeset{logit-4000x400-elasticnet.csv}}
    \end{subfigure}
    \caption{Logistic-$\ell_1$ problem\label{fig:logit}}
\end{figure}

As iterations grow, the (Accelerated) Proximal Gradient identifies manifolds of decreasing dimension in a roughly monotonical way. Alt. Truncated Newton behaves differently: after identifying monotonically manifolds of dimension lower than $2000$, the dimension of the current manifold jumps to about $3000$ for about $10$ iterations, to finally reach quickly the final manifold. We believe that this partial loss of identified structure is caused by iterates getting close to a point
having one non-null but very small coordinate. There, the second-order Taylor extension is valid on a small set %
however it may lead to a Newton step that lies outside that set, thus driving the iterate away. %
The same behavior occurs for Alt. Newton. This difficulty can be related to the well-known problem of constraint activation in nonlinear programming. Despite this, %
\cref{alg:generic} retains a good rate overall.

\vspace*{-1ex}
\subsection{Trace-norm regularized problem}\label{sec:trace}

We consider the following matrix regression problem:
\begin{equation}\label{pb:tracenorm}
    \min_{x\in\bbR^{n_1\times n_2}} \frac{1}{2} \sum_{i=1}^m \left( \langle A_i, x\rangle - y_i \right)^2 + \lambda \|x\|_*,
\end{equation}
where $A_i\in\bbR^{n_1\times n_2}$ for $i=1, \ldots, m$, $y\in\bbR^m$ and $\lambda$ denotes a positive scalar. The nonsmooth part $g(x)=\lambda\|x\|_*$ is described in Section\;\ref{sec:ex}.

We consider an instance of \eqref{pb:tracenorm} where $n_1=10$, $n_2=12$, $m=60$, $\lambda = 10^{-2}$ and the final manifold is that of matrices of rank $6$. The coefficients of %
the $A_i$'s %
are drawn independently from %
a normal law.
From a sparse random vector $s$, 
$y_i$ is taken as $\langle A_i, s\rangle + \xi_i$, where $\xi_i$ follows a centered normal %
law with variance $0.01^2$. \revise{All algorithms start from the same point which is the output of $10^3$ iterations of the accelerated proximal gradient randomly initiated.}

\medskip
\noindent\textbf{Observations}
The experiments are presented in \cref{fig:tracenorm}. We see on \cref{fig:numexptracenorm_subopt} that the Proximal Gradient algorithm and its accelerated version converge sublinearly, which is to be related to the lack of strong convexity of the objective problem. Alt. Truncated Newton converges superlinearly, and shows the interest of the Newtonian acceleration. \Cref{fig:numexps_tracenorm_struct} shows that the Proximal Gradient does not reach the final optimal manifold within the budget of iterations; similarly for the Newton method, within the budget of time.

\begin{figure}
    \centering
    \begin{subfigure}[t]{0.49\textwidth}
        \centering
        \includegraphics[width=\textwidth]{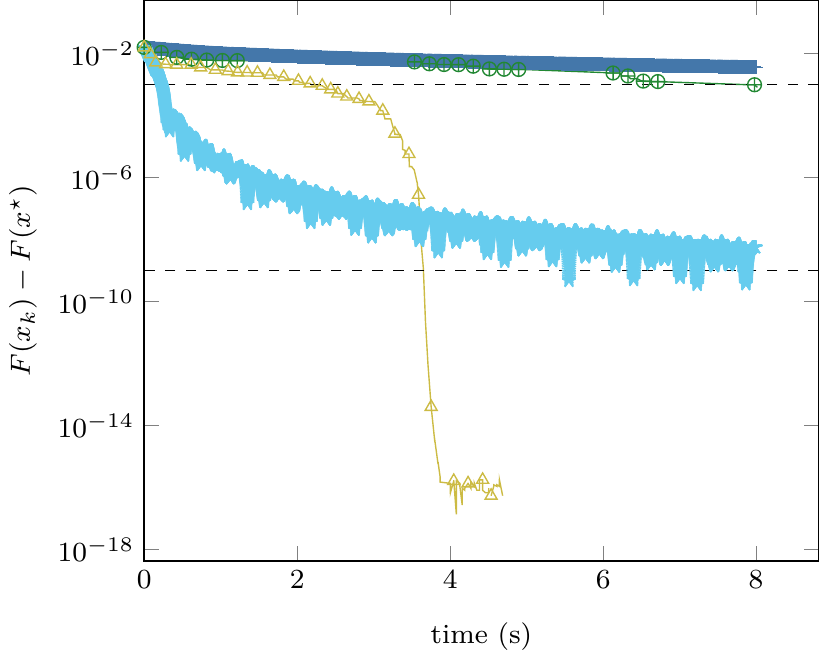}
        \caption{\vspace*{-2ex}Suboptimality vs time (s)} \label{fig:numexptracenorm_subopt}
    \end{subfigure}\hfill
    \begin{subfigure}[t]{0.49\textwidth}
        \centering
        \includegraphics[width=\textwidth]{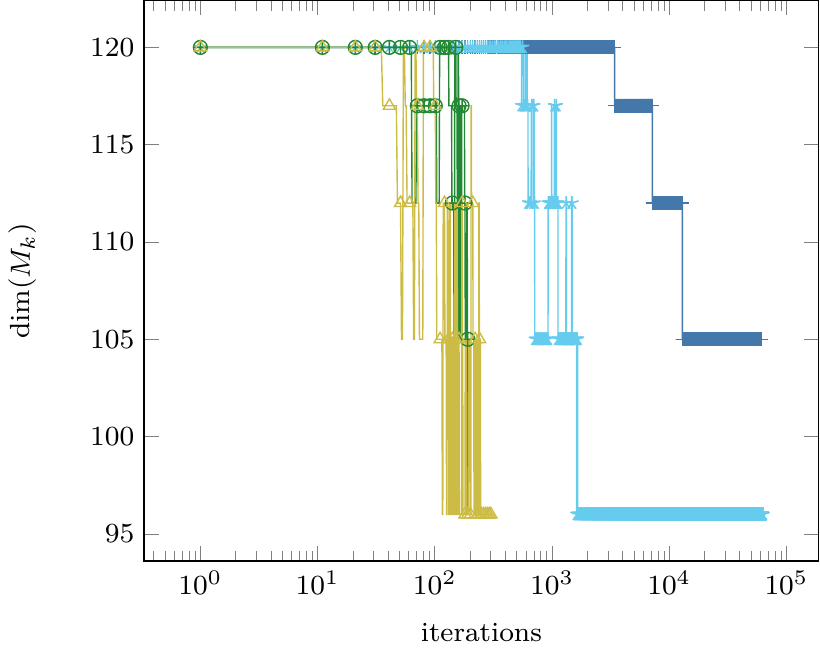}
        \caption{Manifold dimension vs iteration} \label{fig:numexps_tracenorm_struct}
    \end{subfigure} \\ \vspace{0.2cm}
    \begin{subfigure}[t]{\textwidth}
        \includegraphics[width=\textwidth]{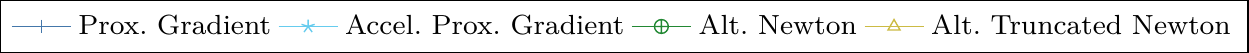}
    \end{subfigure} \\ \vspace{0.2cm}
    \begin{subfigure}[t]{\textwidth}
        \centering
        \resizebox{\textwidth}{!}{\pgfplotstabletypeset{tracenorm-10x12.csv}}
    \end{subfigure}
    \caption{Trace-norm problem\label{fig:tracenorm}\vspace*{-2ex}}
\end{figure}

\vspace*{-1ex}
\section{Concluding remarks}\label{sec:concludingrqs}

This paper proposes and studies a nonsmooth optimization algorithm
exploiting the underlying smooth geometry revealed by the proximal operator.
The %
method alternates between a proximal gradient step providing identification and a Riemann Newton acceleration providing superlinear convergence.
This algorithm has two special features: (\emph{i}) it does not rely on prior knowledge of the final manifold, and (\emph{ii}) its convergence is guaranteed in the (structured) nonconvex case.

Several extensions of this algorithm are possible;  %
specifically, both building blocks can be refined: other Newton accelerations could be considered (e.g. trust-region \cite{absil2007trust}, %
cubic regularization \cite{agarwal2018adaptive})
as well as other proximal algorithms (e.g. prox-Newton %
\cite{lee2014proximal}, fast proximal gradient \cite{beck2009fast}).
We focused here on the simplest Newton acceleration to highlight the ideas and the working horses of our approach.

\vspace*{-1ex}
\ifdraft
    \bibliographystyle{ormsv080}
\else
    \bibliographystyle{spmpsci}
\fi
\bibliography{references}

\appendix

\section{Preliminary Results on the Proximal Gradient}\label{sec:prox}

The first result shows the local Lipschitz continuity of the proximity operator.
It can be proven by applying \cite[Th. 4.4]{poliquin1996prox} with the assumption that $\bar{x}=\prox_{\bar\gamma \funns}(\bar{y})$, following the arguments of \cite[Th.~1]{hare2009computing}.
We provide here a self-contained proof.

\begin{lemma}
    \label{th:proxex}
    Consider a function $\funns:\bbR^n\to\bbR$, a pair of points $\bar{x}, \bar{y}$ and a \revise{step length} $\bar{\gamma}>0$ such that $\bar{x}=\prox_{\bar\gamma \funns}(\bar{y})$ and $\funns$ is $r$ prox-regular at $\bar{x}$ for subgradient $\bar{v} \triangleq (\bar{y}-\bar{x}) / \bar{\gamma}$.

    Then, for any $\gamma\in(0, \min(1/r, \bar\gamma))$, there exists a neighborhood $\N_{\bar{y}}$ of $\bar{y}$ over which $\prox_{\gamma \funns}$ is single-valued and $(1-\gamma r)^{-1}$-Lipschitz continuous. Furthermore, there holds
    \begin{equation*}
        x = \prox_{\gamma \funns}(y) \Leftrightarrow (y-x)/\gamma \in\partial \funns(x)
    \end{equation*}
    for $y\in\N_{\bar{y}}$ and $x$ near $\bar{x}$ in the sense $\|x-\bar{x}\|<\varepsilon$, $|\funns(x)-\funns(\bar{x})|<\varepsilon$ and $\|(y-x)/\gamma - \bar{v}\|<\varepsilon$. %
\end{lemma}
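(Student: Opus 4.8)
The plan is to exploit the single ingredient that prox-regularity provides—a one-sided quadratic lower bound on $g$ in terms of its subgradients—and combine it with the strongly convex term $\frac{1}{2\gamma}\|\cdot-y\|^2$. Since $\gamma<1/r$, this term strictly dominates the hypoconvexity modulus $r$, which is exactly what turns the proximal objective $h_y(u):=g(u)+\frac{1}{2\gamma}\|u-y\|^2$ into a locally strongly convex function. I would first record the reference input $\hat{y}:=\bar{x}+\gamma\bar{v}$ (which reduces to $\bar{y}$ when $\gamma=\bar\gamma$), at which $\bar{x}$ is stationary for $h_{\hat y}$, and then localize every subsequent estimate to the prox-regularity neighborhood, i.e. to points $x$ with $\|x-\bar{x}\|<\varepsilon$, $|g(x)-g(\bar{x})|<\varepsilon$ and subgradients $v$ with $\|v-\bar{v}\|<\varepsilon$.

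The heart of the equivalence is the implication $(y-x)/\gamma\in\partial g(x)\Rightarrow x=\prox_{\gamma g}(y)$. Writing $v=(y-x)/\gamma$, applying the prox-regularity inequality at $(x,v)$ to an arbitrary nearby $u$, and completing the square against $\frac{1}{2\gamma}\|u-y\|^2$, the cross terms $\langle y-x,u-x\rangle$ cancel and I obtain
\[
 h_y(u)\ \ge\ h_y(x)+\tfrac12\big(\tfrac1\gamma-r\big)\|u-x\|^2 ,
\]
so $x$ is a strong local minimizer of $h_y$ with modulus $1/\gamma-r>0$. The reverse implication is just Fermat's rule (stationarity of $h_y$), which yields the stated characterization once the minimizer is known to be the genuine proximal point (handled in the last paragraph).

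For single-valuedness and the Lipschitz modulus, I would add the prox-regularity inequalities written at two points $x_1,x_2$ with subgradients $v_1,v_2$, giving the hypomonotonicity bound $\langle v_1-v_2,x_1-x_2\rangle\ge-r\|x_1-x_2\|^2$. Substituting $v_i=(y_i-x_i)/\gamma$ and rearranging gives $\langle y_1-y_2,x_1-x_2\rangle\ge(1-\gamma r)\|x_1-x_2\|^2$, whence Cauchy--Schwarz delivers $\|x_1-x_2\|\le(1-\gamma r)^{-1}\|y_1-y_2\|$. Taking $y_1=y_2$ proves uniqueness of the proximal point near $\bar{x}$, and the same inequality is precisely the claimed Lipschitz constant.

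The main obstacle is the localization: the estimates above only hold inside the prox-regularity neighborhood, so I must guarantee that the global minimizer defining $\prox_{\gamma g}(y)$ actually lands there for $y$ near $\hat{y}$, and that the side conditions ($v$ near $\bar{v}$, the $g$-value near $g(\bar{x})$) are met. Here I would invoke the hypothesis that $\bar{x}$ is the \emph{global} proximal point for the larger step $\bar\gamma$: the inequality $g(u)+\frac{1}{2\bar\gamma}\|u-\bar{y}\|^2\ge g(\bar{x})+\frac{1}{2\bar\gamma}\|\bar{x}-\bar{y}\|^2$ provides a global quadratic minorant of $g$, and because $1/\gamma>1/\bar\gamma$ the objective $h_y$ is coercive with minimizers confined to a bounded set. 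A continuity argument then forces the minimizer to approach $\bar{x}$ as $y\to\hat{y}$ (the value condition following from comparing $h_y$ at the minimizer and at $\bar{x}$), bringing $(x,v,g(x))$ into the prox-regularity neighborhood; combined with the strong-minimizer property this identifies the global prox with the local one, at which point the characterization and the Lipschitz bound apply verbatim.
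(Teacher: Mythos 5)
Your proposal is correct, but it takes a genuinely different route from the paper's. The paper proves \cref{th:proxex} by reduction: it normalizes the data via $\tilde{\funns}=\funns(\cdot+\bar{x})-\langle\bar{v},\cdot\rangle-\funns(\bar{x})$ (so that the point, the subgradient and the function value all become $0$), rewrites the hypothesis $\bar{x}=\prox_{\bar\gamma\funns}(\bar{y})$ as the global growth condition $\tilde{\funns}(\tilde{x})>-\frac{1}{2\bar\gamma}\|\tilde{x}\|^2$ for $\tilde{x}\neq0$, invokes \cite[Th.~4.4]{poliquin1996prox} as a black box, and finally identifies the $\funns$-attentive localization of $\partial\funns$ with $\partial\funns$ itself to obtain the stated characterization. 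You instead re-derive the needed content of that external theorem: (i) the completion-of-squares argument showing that a nearby point satisfying $(y-x)/\gamma\in\partial\funns(x)$ is a strong local minimizer of $h_y(u)=\funns(u)+\frac{1}{2\gamma}\|u-y\|^2$ with modulus $\frac{1}{\gamma}-r$ (the same computation as in \cref{lemma:prox_strong_min}, with $f\equiv 0$ and carried out at nearby pairs $(x,v)$ rather than only at $(\bar{x},\bar{v})$); (ii) hypomonotonicity of the localized subdifferential, which yields exactly the $(1-\gamma r)^{-1}$ Lipschitz constant and single-valuedness; and (iii) a coercivity/argmin-continuity argument---powered by the same global optimality of $\bar{x}$ at step $\bar\gamma$ that the paper converts into the growth hypothesis---which forces global proximal points, their $\funns$-values and their subgradients into the prox-regularity localization region, so that (i) and (ii) apply to them and the local strong minimizer is identified with the global prox. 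What each approach buys: yours is self-contained and makes visible exactly where each hypothesis enters ($\gamma<\bar\gamma$ gives coercivity, $\gamma<1/r$ gives the positive modulus), while the paper's is much shorter, outsourcing (ii)--(iii) to \cite{poliquin1996prox} (and correcting, in passing, a minor error in that reference's Lipschitz constant). Two small points of care: first, when asserting existence of global minimizers of $h_y$ in step (iii), you should invoke lower semicontinuity of $\funns$ (available from \cref{asm:fun}); second, your neighborhood is centered at $\hat{y}=\bar{x}+\gamma\bar{v}$ rather than at $\bar{y}$, and these differ whenever $\gamma<\bar\gamma$ and $\bar{v}\neq0$---but this is not a defect of your argument, since the paper's own reduction likewise produces a neighborhood of $\bar{x}+\gamma\bar{v}$ (this is also how the result is quoted in \cref{sec:prelim}), so the statement's $\N_{\bar{y}}$ must in any case be read with that understanding.
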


\begin{proof}
    One can easily check that prox-regularity of $\funns$ at $\bar{x}$ for subgradient $\bar{v}$ is equivalent to prox-regularity of function $\tilde{\funns}$ around $0$ for subgradient $0$, with $\tilde{\funns} = \funns(\cdot + \bar{x}) - \langle \bar{v}, \cdot\rangle - \funns(\bar{x})$ and a change of variable $\tilde{x} = x-\bar{x}$. Similarly, $\bar{x}=\prox_{\bar\gamma \funns}(\bar{y})$ is characterized by its global optimality~condition
    \begin{equation*}
        \funns(x) + \frac{1}{2\bar\gamma}\|x-\bar{y}\| > \funns(\bar{x}) + \frac{1}{2\bar\gamma}\|\bar{x}-\bar{y}\|^2 \qquad \text{ for all } x\neq\bar{x},
    \end{equation*}
    which we may write as
    \begin{equation*}
        \funns(x) > \funns(\bar{x}) + \langle \bar{v}, x-\bar{x} \rangle - \frac{1}{2\bar\gamma}\|x - \bar{x}\|^2 \qquad \text{ for all } x\neq\bar{x}.
    \end{equation*}
    Under that same change of variables, since $\tilde{\funns}(0) = 0$, this optimality condition rewrites as
    \begin{equation*}
        \tilde{\funns}(\tilde{x}) > - \frac{1}{2\bar\gamma}\|\tilde{x}\|^2 \qquad \text{ for all } \tilde{x}\neq 0.
    \end{equation*}
    We may thus apply Theorem 4.4 from \cite{poliquin1996prox} to get the claimed result on $\tilde{\funns}$, which transfers back to $\funns$ as our change of function and variable is bijective. We thus obtain that for $\gamma\in(0, \min(1/r, \bar\gamma))$, on a neighborhood $\N_{\bar{y}}$ of $\bar{y}$, $\prox_{\gamma \funns}$ is single-valued, $(1-\gamma r)^{-1}$-Lipschitz continuous and $\prox_{\gamma \funns}(y)=[I+\gamma T]^{-1}(y)$, where $T$ denotes the $\funns$-attentive $\varepsilon$-localization of $\partial \funns$ at $\bar{x}$. Taking $y$ near $\bar{y}$ and $x$ near $\bar{x}$ such that $\|x-\bar{x}\|<\varepsilon$, $|\funns(x)-\funns(\bar{x})|<\varepsilon$ and $\|(y-x)/\gamma - \bar{v}\|<\varepsilon$ allows to identify the localization of $\partial \funns(x)$ with $\partial \funns(x)$, so that
    \begin{equation*}
        \frac{y-x}{\gamma}\in \partial \funns(x)\Leftrightarrow \frac{y-x}{\gamma}\in T(x)\Leftrightarrow (I+\gamma T)(x)=y\Leftrightarrow x=\prox_{\gamma \funns}(y).
    \end{equation*}
    Note that the proof of \cite[Th. 4.4]{poliquin1996prox} includes a minor error relative to the Lipschitz constant computation, we report here a corrected value.
\end{proof}

Now, we show that critical points of prox-regular functions are strong local minimizers; this result appears more or less explicitly in some articles, including\;\cite{daniilidis2006geometrical}.

\begin{lemma}\label{lemma:prox_strong_min}
    Let $f$ and $g$ denote two functions and $\bar{x}$, $\bar{y}$ two points such that $f$ is differentiable at $\bar{y}$ and $g$ is $r$-prox-regular at $\bar{x}$ for subgradient $\frac{1}{\gamma}(\bar{y}-\bar{x}) - \nabla f(\bar{y}) \in \partial g(\bar{x})$ with $\gamma \in (0, 1/r)$.
    Then, the function $\rho_{\bar{y}} : x\mapsto g(x) + \frac{1}{2\gamma}\|\bar{y}-\gamma\nabla f(\bar{y}) - x\|^2$ satisfies %
    \begin{equation*}
        \rho_{\bar{y}}(x) \ge \rho_{\bar{y}}(\bar{x}) + \frac{1}{2} \left( \frac{1}{\gamma} - r \right) \| x-\bar{x} \|^2, \qquad \text{for all $x$ near $\bar{x}$.}
    \end{equation*}
\end{lemma}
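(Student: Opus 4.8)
The plan is to apply the prox-regularity inequality at the single base point $\bar{x}$ with its distinguished subgradient, and then to recognize that the quadratic term in $\rho_{\bar{y}}$ is centered exactly at $\bar{x}+\gamma\bar{v}$, so that the two linear contributions cancel. Write $\bar{v} \triangleq \frac{1}{\gamma}(\bar{y}-\bar{x}) - \nabla f(\bar{y})$, which belongs to $\partial g(\bar{x})$ by hypothesis. By the definition of $r$-prox-regularity of $g$ at $\bar{x}$ for $\bar{v}$, instantiated with the base pair $(x,v)=(\bar{x},\bar{v})$, there is $\varepsilon>0$ such that
\[
    g(x) \ge g(\bar{x}) + \langle \bar{v},\, x-\bar{x}\rangle - \frac{r}{2}\|x-\bar{x}\|^2 \qquad \text{whenever } \|x-\bar{x}\|<\varepsilon.
\]
The point of using the base pair is that all the localization constraints become automatic: $\|\bar{x}-\bar{x}\|=0<\varepsilon$, $\|\bar{v}-\bar{v}\|=0<\varepsilon$, and $g(\bar{x})<g(\bar{x})+\varepsilon$, so no $g$-attentive localization of the subdifferential is actually needed.

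Next I would carry out the elementary algebra that produces the cancellation. Setting $w \triangleq \bar{y}-\gamma\nabla f(\bar{y})$, one checks directly that $w-\bar{x}=\gamma\bar{v}$, which is the key identity. Expanding the quadratic part of $\rho_{\bar{y}}$ with $d \triangleq x-\bar{x}$ then gives
\[
    \frac{1}{2\gamma}\|w-x\|^2 - \frac{1}{2\gamma}\|w-\bar{x}\|^2 = \frac{1}{2\gamma}\bigl(-2\langle w-\bar{x},\, d\rangle + \|d\|^2\bigr) = -\langle\bar{v},\, d\rangle + \frac{1}{2\gamma}\|d\|^2.
\]
Adding this to the prox-regularity lower bound on $g(x)-g(\bar{x})$, the terms $\pm\langle\bar{v},d\rangle$ cancel, leaving
\[
    \rho_{\bar{y}}(x) - \rho_{\bar{y}}(\bar{x}) \ge \Bigl(\frac{1}{2\gamma} - \frac{r}{2}\Bigr)\|d\|^2 = \frac{1}{2}\Bigl(\frac{1}{\gamma}-r\Bigr)\|x-\bar{x}\|^2,
\]
valid for all $x$ with $\|x-\bar{x}\|<\varepsilon$, which is exactly the claimed bound.

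I do not expect any real obstacle here: the proof is a one-shot application of prox-regularity followed by a quadratic expansion. The only two things to get right are the instantiation at the base point (so the localization conditions are vacuous) and the identification of the quadratic's center $w$ with $\bar{x}+\gamma\bar{v}$, which is what yields the clean cancellation of the linear terms. Finally, the standing hypothesis $\gamma<1/r$ is precisely what makes the coefficient $\frac{1}{2}\bigl(\frac{1}{\gamma}-r\bigr)$ strictly positive, so that $\bar{x}$ is a strong local minimizer of $\rho_{\bar{y}}$, as needed in \cref{th:PGpointlocatemanifolds}.
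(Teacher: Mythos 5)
Your proof is correct and takes essentially the same route as the paper: both apply the prox-regularity inequality at the base pair $(\bar{x},\bar{v})$ with $\bar{v}=\frac{1}{\gamma}(\bar{y}-\bar{x})-\nabla f(\bar{y})$, then cancel the linear terms via quadratic algebra (the paper via the polarization identity $2\langle b-a,c-a\rangle=\|b-a\|^2+\|c-a\|^2-\|b-c\|^2$, you via direct expansion of $\|w-x\|^2-\|w-\bar{x}\|^2$, which is the same computation). Your explicit check that the localization conditions in the definition of prox-regularity become vacuous at the base pair is a nice touch that the paper leaves implicit.
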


\begin{proof}
    Prox-regularity of $g$ at $\bar{x}$ with subgradient $\frac{1}{\gamma} (\bar{y}-\gamma \nabla f(\bar{y}) - \bar{x}) \in \partial g(\bar{x})$ writes
    \begin{equation*}
        g(x) \ge g(\bar{x}) + \frac{1}{\gamma}\langle \bar{y}-\gamma \nabla f(\bar{y}) - \bar{x} , x-\bar{x} \rangle - \frac{r}{2} \| x-\bar{x} \|^2.
    \end{equation*}
    The identity $ 2\langle b-a, c-a\rangle = \|b-a\|^2 + \|c-a\|^2 -   \|b-c\|^2 $ applied to the previous scalar product yields:
    \begin{equation*}
        g(x) \ge g(\bar{x}) + \frac{1}{2\gamma} \| \bar{y}-\gamma \nabla f(\bar{y}) - \bar{x} \|^2 + \frac{1}{2\gamma} \| x-\bar{x} \|^2 - \frac{1}{2\gamma} \|\bar{y}-\gamma \nabla f(\bar{y}) - x \|^2 - \frac{r}{2} \| x-\bar{x} \|^2,
    \end{equation*}
    which rewrites
    \begin{equation*}
        \underbrace{ g(x) + \frac{1}{2\gamma} \|\bar{y}-\gamma \nabla f(\bar{y}) - x \|^2 }_{=\rho_{\bar{y}}(x) } \ge  \underbrace{  g(\bar{x}) + \frac{1}{2\gamma} \| \bar{y}-\gamma \nabla f(\bar{y}) - \bar{x} \|^2   }_{=\rho_{\bar{y}}(\bar{x}) } + \frac{1}{2} \left( \frac{1}{\gamma} - r \right) \| x-\bar{x} \|^2,
    \end{equation*}
    \vspace*{-1ex}
which is the claimed inequality.
\end{proof}

\section{Technical results on Riemannian methods.}\label{sec:riemann}

In this section, we provide basic results on Riemannian optimization that simplify our developments and that we have not been able to find in the existing literature.

\vspace*{-1ex}
\subsection{Euclidean spaces and manifolds, back and forth}

We establish here a connection between the Riemannian and the Euclidean distances.

\begin{lemma}\label{lem:InvRetractionApproxLog}
    Consider a point $\bar{x}$ of a Riemannian manifold $\M$, equipped with a retraction $\R$ such that $\R_{\bar{x}}$ is $\C^2$. For any $\varepsilon>0$, there exists a neighborhood $\U$ of $\bar{x}$ in $\M$ such that 
    \begin{equation*}
        (1-\varepsilon)\distM(x, \bar{x}) \le \|\R^{-1}_{\bar{x}}(x)\| \le (1+\varepsilon)\distM(x, \bar{x}) \qquad \text{ for all $x\in\U$.}
    \end{equation*}
    where $\R^{-1}_{\bar{x}}:\M\to\tangent{\bar{x}}{\M}$ is the smooth inverse of $\R_{\bar{x}}$ defined locally around $\bar{x}$.
\end{lemma}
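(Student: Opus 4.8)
The plan is to transport the problem to the tangent space $\tangent{\bar{x}}{\M}$ through the local diffeomorphism $\R_{\bar{x}}$ and to compare the Riemannian metric pulled back by $\R_{\bar{x}}$ with the flat Euclidean metric of that tangent space. Writing $v=\R^{-1}_{\bar{x}}(x)$, so that $\|v\|=\|\R^{-1}_{\bar{x}}(x)\|$ and $x=\R_{\bar{x}}(v)$, I introduce for $w$ near $0$ the symmetric form $G(w)[\xi,\eta]=\langle \D\R_{\bar{x}}(w)[\xi],\,\D\R_{\bar{x}}(w)[\eta]\rangle$ on $\tangent{\bar{x}}{\M}$, i.e.\ the pullback of the ambient metric of $\M$. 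Since $\D\R_{\bar{x}}(0)$ is the identity, $G(0)$ is exactly the Euclidean inner product, and since $\R_{\bar{x}}$ is $\C^{2}$ the map $w\mapsto G(w)$ is continuous. Hence, given any $\delta\in(0,1)$, the inverse function theorem together with continuity yields a radius $\rho>0$ such that $\R_{\bar{x}}$ is a diffeomorphism from the closed ball $\bar{\B}(0,\rho)$ onto its image and
\[
(1-\delta)\,\|\xi\|^{2}\;\le\; G(w)[\xi,\xi]\;\le\;(1+\delta)\,\|\xi\|^{2}
\qquad\text{for all } w\in\bar{\B}(0,\rho),\ \xi\in\tangent{\bar{x}}{\M}.
\]

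For the left inequality I would use the explicit curve $c(t)=\R_{\bar{x}}(tv)$, $t\in[0,1]$, which joins $\bar{x}$ to $x$ and stays in the chart as soon as $\|v\|\le\rho$. Its velocity is $c'(t)=\D\R_{\bar{x}}(tv)[v]$, so $\|c'(t)\|^{2}=G(tv)[v,v]\le(1+\delta)\|v\|^{2}$, and integrating gives $\distM(x,\bar{x})\le\mathrm{length}(c)\le\sqrt{1+\delta}\,\|v\|$. This rearranges to $\|v\|\ge(1+\delta)^{-1/2}\distM(x,\bar{x})$, which is the left inequality once $\delta$ is chosen so that $(1+\delta)^{-1/2}\ge 1-\varepsilon$.

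For the right inequality I must control an arbitrary smooth curve $\sigma:[0,1]\to\M$ from $\bar{x}$ to $x$. Restricting to $x$ with $\|v\|\le\rho$, I split into two cases. If $\sigma$ stays inside $\R_{\bar{x}}(\bar{\B}(0,\rho))$, then its pullback $\tilde\sigma=\R^{-1}_{\bar{x}}\circ\sigma$ is a well-defined $\C^{1}$ curve from $0$ to $v$, and the lower metric bound gives $\mathrm{length}(\sigma)=\int_{0}^{1}\sqrt{G(\tilde\sigma)[\tilde\sigma',\tilde\sigma']}\,\mathrm{d}t\ge\sqrt{1-\delta}\int_{0}^{1}\|\tilde\sigma'\|\,\mathrm{d}t\ge\sqrt{1-\delta}\,\|v\|$, using that the Euclidean length of $\tilde\sigma$ dominates the chord $\|v\|$. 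If instead $\sigma$ leaves that set, then by continuity its pullback attains norm arbitrarily close to $\rho$ before exiting, so the initial portion alone has length at least $\sqrt{1-\delta}\,\rho\ge\sqrt{1-\delta}\,\|v\|$. In both cases $\mathrm{length}(\sigma)\ge\sqrt{1-\delta}\,\|v\|$; taking the infimum over $\sigma$ gives $\distM(x,\bar{x})\ge\sqrt{1-\delta}\,\|v\|$, i.e.\ $\|v\|\le(1-\delta)^{-1/2}\distM(x,\bar{x})$, which is the right inequality once $(1-\delta)^{-1/2}\le 1+\varepsilon$.

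It then remains to fix a single $\delta>0$ small enough to meet both factor constraints and to set $\U=\R_{\bar{x}}(\B(0,\rho'))$ for $\rho'\le\rho$ small, a neighborhood of $\bar{x}$ in $\M$ on which $\|v\|\le\rho$ holds automatically, so that both bounds apply. The metric-comparison estimates and the upper bound are routine; \textbf{the one step that genuinely needs care is the exit argument in the lower bound}, since the distance $\distM$ is defined as a global infimum and one must rule out that a curve wandering outside the chart could realize a length smaller than the local Euclidean estimate $\sqrt{1-\delta}\,\|v\|$.
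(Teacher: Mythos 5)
Your proof is correct, but it follows a genuinely different route from the paper's. The paper compares $\|\R_{\bar{x}}^{-1}(x)\|$ directly with $\|\log_{\bar{x}}(x)\|$: it forms the difference $f(x)=\|\log_{\bar{x}}(x)\|-\|\R_{\bar{x}}^{-1}(x)\|$, notes that $f$ vanishes at $\bar{x}$ together with its differential (both $\log_{\bar{x}}$ and $\R_{\bar{x}}^{-1}$ have identity differential there), deduces a quadratic bound $|f|\le C\,\distM(x,\bar{x})^2$ in normal coordinates, and then invokes the standard identity $\distM(x,\bar{x})=\|\log_{\bar{x}}(x)\|$ to conclude. You instead work straight from the definition of $\distM$ as an infimum of curve lengths: you pull back the ambient metric through $\R_{\bar{x}}$, observe it is $\delta$-close to the Euclidean metric on a small ball, and obtain the upper bound on $\distM$ from the explicit curve $t\mapsto\R_{\bar{x}}(tv)$ and the lower bound from a case analysis on whether a competing curve stays in or exits the chart. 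What each approach buys: the paper's argument is shorter because it delegates the hard part (curves realizing the distance) to the known local isometry property of the exponential map, at the price of introducing $\exp_{\bar{x}}$ and $\log_{\bar{x}}$ and a slightly delicate smoothness claim (the difference of two norm functions near their common zero); your argument is more elementary and self-contained, never using the exponential map, but must pay for that with the exit argument you flagged. That step does go through: since $\R_{\bar{x}}$ restricted to $\B(0,\rho)$ is a homeomorphism onto an open subset of $\M$, a curve whose pullback stayed in some $\bar{\B}(0,\rho')$ with $\rho'<\rho$ would have a limit point inside the open chart at its putative exit time and so could not actually exit; hence the pullback's norm approaches $\rho$, and the length of the initial portion is at least $\sqrt{1-\delta}\,\rho\ge\sqrt{1-\delta}\,\|v\|$ as you claim. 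Spelling out that topological step would make your proof fully rigorous.
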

\begin{proof}
    The retraction at $\bar{x}$ can be inverted locally around $0$. Indeed, as $\D\R_{\bar{x}}(0_{\tangent{\bar{x}}{\M}})=I$ is invertible and $\R_{\bar{x}}$ is $\C^2$, the implicit function theorem provides the existence of a $\C^2$ inverse function $\R^{-1}_{\bar{x}}:\M\to\tangent{\bar{x}}{\M}$ defined locally around $\bar{x}$. Furthermore, one shows by differentiating the relation $\R_{\bar{x}}\circ\R_{\bar{x}}^{-1}$ that the differential of $\R^{-1}_{\bar{x}}$ at $\bar{x}$ is the identity.

    We consider the function $f:\M\to\RR$ defined by $f(x)=\|\log_{\bar{x}}(x)\|-\|\R_{\bar{x}}^{-1}(x)\|$. Clearly $f(\bar{x})=0$, and $\D f(\bar{x})=0$ as the differentials of both $\R_{\bar{x}}^{-1}$ and logarithm at $\bar{x}$ are the identity. In local coordinates $\hat{x}=\log_{\bar{x}}{x}$ around $\bar{x}$, $f$ is represented by the function $\hat{f}=f\circ\exp_{\bar{x}}: \tangent{\bar{x}}{\M}\to\RR$. As $\hat{f}(\hat{\bar{x}})=0$, $\D \hat{f}(\hat{\bar{x}})=0$ and $\hat{f}$ is $\C^2$, there exists some $C>0$ such that
    \begin{equation*}
        -C \|\hat{x} - \hat{\bar{x}}\|^2 \le \hat{f}(\hat{x}) \le C \|\hat{x} - \hat{\bar{x}}\|^2 \qquad\text{in a neighborhood $\hat{\U}$ of $\hat{\bar{x}}$,}.
    \end{equation*}
    For any $\varepsilon>0$, by taking a small enough neighborhood $\hat{\U}'\subset\hat{\U}$, there holds
    \begin{equation*}
        -\varepsilon \|\hat{x} - \hat{\bar{x}}\| \le \hat{f}(\hat{x}) \le \varepsilon \|\hat{x} - \hat{\bar{x}}\|.
    \end{equation*}
    Thus for all $x$ in $\U = \R_{\bar{x}}(\hat{\U}')$,
    \begin{equation*}
        -\varepsilon \|\log_{\bar{x}}(x)\| \le \|\log_{\bar{x}}(x)\|-\|\R_{\bar{x}}^{-1}(x)\| \le \varepsilon \|\log_{\bar{x}}(x)\|,
    \end{equation*}
    as $\hat{x}=\log_{\bar{x}}(x)$, $\hat{\bar{x}}=0$. We conclude with %
    $\distM(x, \bar{x})=\|\hat{x} - \hat{\bar{x}}\| = \|\log_{\bar{x}}(x)\|$.
\end{proof}

We recall a slightly specialized version of \cite[Th. 2.2]{miller2005newton}, which is essentially the application of the implicit function theorem around a point of a manifold.

\begin{proposition}\label{prop:tangretraction}
    Consider a $p$-dimensional $\C^k$-submanifold $\M$ of $\bbR^n$ around a point $\bar{x}\in\M$. The mapping $\R : \tangentBundle\to\M$, defined for $(x, \eta)\in\tangentBundle$ near $(\bar{x}, 0)$ by $\proj_x(\R(x, \eta)) = \eta$ defines a second-order retraction near $(\bar{x}, 0)$. The point-wise retraction, defined as $\R_x = \R(x, \cdot)$, is locally invertible with inverse $\R_{x}^{-1}=\proj_{x}$.
\end{proposition}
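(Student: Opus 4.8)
The plan is to recognize $\R$ as the \emph{projection-type} retraction and to obtain its existence, uniqueness, and smoothness from the implicit function theorem, exactly as in the cited \cite[Th.~2.2]{miller2005newton}. Reading $\proj_x$ relative to the base point, so that the defining relation $\proj_x(\R(x,\eta))=\eta$ means $\R(x,\eta)\in\M$ together with $P_x\bigl(\R(x,\eta)-x\bigr)=\eta$ — where $P_x$ denotes the linear orthogonal projection onto $\tangentM$, equivalently $\R(x,\eta)-(x+\eta)\in\normalM$ (this is the convention consistent with \cref{ex:manifold_l1} and with the inverse claim) — I would first fix a local $\C^k$ equation $\Phi:\bbR^n\to\bbR^{n-p}$ of $\M$ around $\bar x$, with $\D\Phi(\bar x)$ surjective and $\ker\D\Phi(\bar x)=\tangentM[\bar{x}]$, and a (one-degree-less-smooth) orthonormal frame $N(\cdot)$ of the normal bundle, i.e. a matrix $N(x)\in\bbR^{n\times(n-p)}$ whose columns span $\normalM$. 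Writing the normal displacement as $N(x)\mu$ collapses both requirements into the single equation $\Psi(\mu;x,\eta)\triangleq\Phi\bigl(x+\eta+N(x)\mu\bigr)=0$ in the unknown $\mu\in\bbR^{n-p}$.

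Next I would apply the implicit function theorem at $(\mu;x,\eta)=(0;\bar x,0)$, where $\Psi=\Phi(\bar x)=0$. Its partial differential in $\mu$ there is $\D\Phi(\bar x)\,N(\bar x)$, which is invertible: $\D\Phi(\bar x)$ restricts to an isomorphism from $\normalM[\bar{x}]$ (a complement of $\ker\D\Phi(\bar x)=\tangentM[\bar{x}]$) onto $\bbR^{n-p}$, while $N(\bar x)$ is an isomorphism from $\bbR^{n-p}$ onto $\normalM[\bar{x}]$. This produces a unique, smooth local solution $\mu(x,\eta)$ with $\mu(\bar x,0)=0$, hence a well-defined map $\R(x,\eta)=x+\eta+N(x)\mu(x,\eta)\in\M$ satisfying $\proj_x(\R(x,\eta))=\eta$, since $N(x)\mu\in\normalM$ projects to zero under $P_x$.

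From this single identity the retraction axioms follow by differentiation. Centering holds since $z=x$ is the unique solution at $\eta=0$, so $\R_x(0)=x$. Differentiating $P_x\bigl(\R_x(\eta)-x\bigr)=\eta$ at $\eta=0$ and using that $P_x$ is linear and acts as the identity on $\tangentM$ — into which $\D\R_x(0)$ maps, because $\R_x(0)=x$ — gives $\D\R_x(0)=\mathrm{id}_{\tangentM}$. Consequently $\R_x$ is a local diffeomorphism near $0$, and the relation $\proj_x\circ\R_x=\mathrm{id}$ identifies its inverse as $\R_x^{-1}=\proj_x$.

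The second-order property is then a one-line consequence, and is really the only conceptual point. Along $c(t)=\R_x(t\eta)$ the base point $x$ is frozen, so $P_x$ is a \emph{fixed} linear map and the defining identity reads $P_x\bigl(c(t)-x\bigr)=t\eta$, which is affine in $t$. Differentiating twice in the ambient space gives $P_x\,\ddot c(t)=0$; since $c(0)=x$, the acceleration satisfies $c''(0)=\proj_{c(0)}\ddot c(0)=P_x\ddot c(0)=0$, so $\R$ is a second-order retraction. The only genuinely delicate step is the existence-and-smoothness argument above — constructing a smoothly varying normal frame and checking the invertibility of $\partial_\mu\Psi$ that powers the implicit function theorem, which is precisely the content delegated to \cite[Th.~2.2]{miller2005newton}; once $\R$ is known to be a well-defined smooth $\M$-valued map obeying $\proj_x(\R(x,\eta))=\eta$, all three conclusions drop out of that identity.
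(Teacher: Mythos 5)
Your proof is correct, and its skeleton matches the paper's: both fix a local $\C^k$ defining equation of $\M$ near $\bar{x}$ and invoke the implicit function theorem to produce the normal offset that makes $x+\eta+(\text{normal part})$ land on $\M$, then read the retraction axioms off the resulting map. Two differences are worth recording. First, you parametrize the normal displacement through a frame, writing it as $N(x)\mu$ and solving for the coordinate vector $\mu\in\bbR^{n-p}$, whereas the paper solves directly for $\eta_n\in\normalM$ viewed as a bundle variable; this is essentially cosmetic (your frame costs one degree of smoothness in $x$, but the paper's bundle-valued formulation carries the same limitation, since the tangent/normal bundles of a $\C^k$ manifold are only $\C^{k-1}$). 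Second, and more substantively, your verification of the second-order property is both different and more robust than the paper's. The paper writes $c(t)=x+t\eta+\eta_n(x,t\eta)$ and asserts $c'(t)=\eta$ for all $t$, justified only by $\D_{\eta_t}\eta_n(x,0)=0$; that derivative vanishes only at $\eta_t=0$, so the asserted identity generally fails for $t\neq 0$ (already on the circle), even though the conclusion $c''(0)=0$ survives. Your argument repairs exactly this point: since the defining identity reads $P_x\bigl(c(t)-x\bigr)=t\eta$ with $P_x$ a \emph{fixed} linear projector, two ambient differentiations give $P_x\,\ddot c(t)\equiv 0$, and because $c(0)=x$ the Riemannian acceleration at $0$ is precisely $\proj_{c(0)}\ddot c(0)=P_x\ddot c(0)=0$. (Equivalently: all $\eta$-derivatives of $\eta_n(x,\cdot)$ lie in the fixed space $\normalM$, so $\ddot c(0)$ is normal at $x$.) You also make explicit the local-invertibility claim $\R_x^{-1}=\proj_x$ — via $\D\R_x(0)=\mathrm{id}$, the inverse function theorem, and the left-inverse identity $\proj_x\circ\R_x=\mathrm{id}$ — which the paper's proof states in the proposition but never argues.
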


\begin{proof}
    Let $\Psi : \bbR^n\to\bbR^{n-p}$ denote a $\C^k$ function defining $\M$ around $\bar{x}$: for all $x$ close enough to $\bar{x}$, there holds $x\in\M \Leftrightarrow \Psi(x)=0$, and $\D\Psi(x)$ is surjective. Consider the equation $\Phi(x, \eta_t, \eta_n) = 0$ around $(\bar{x}, 0, 0)$, with
    \begin{equation*}
        \begin{array}{rl}
            \Phi : \left\{ x, \eta_t, \eta_n : x\in\M, \eta_t\in\tangentM, \eta_n\in\normalM \right\} & \to \bbR                       \\
            x, \eta_t, \eta_n                                                                         & \mapsto \Psi(x+\eta_t+\eta_n).
        \end{array}
    \end{equation*}
    The partial differential $\D_{\eta_n}\Phi(\bar{x}, 0, 0)$ is, for $\xi_n\in\normalM[\bar{x}]$,
    \begin{equation*}
        \D_{\eta_n}\Phi(\bar{x}, 0, 0)[\xi_n] = \D\Psi(\bar{x})[\xi_n].
    \end{equation*}
    Since $\bar{x}\in\M$, $\D_{\eta_n}\Phi(\bar{x}, 0, 0)$ is surjective from $\normalM[\bar{x}]$ to $\bbR^{n-p}$ so its a bijection. The implicit function theorem provides the existence of neighborhoods $\N_{\bar{x}}^1\subset\M$, $\N_{0}^2\subset\cup_{x\in\M} \tangentM$, $\N_{0}^3\subset\cup_{x\in\M}\normalM$ and a unique $\C^k$ function $\eta_n : \N_{\bar{x}}^1 \times \N_0^2 \to \N_0^3$ such that, for all $x\in\N_{\bar{x}}^1$, $\eta_t\in\N_0^2$ and $\eta_n\in\N_0^3$, $\eta_n(\bar{x}, 0) = 0$ and
    \begin{equation*}
        \Phi(x, \eta_t, \eta_n(x, \eta_t)) = 0 \Leftrightarrow x+\eta_t+\eta_n(x, \eta_t) \in \M.
    \end{equation*}
    It also provides an expression for the partial derivative of $\eta_n$ at $(x, 0)$ along $\eta_t$: for $\xi_t\in\tangentM$,
    \begin{equation*}
        \D_{\eta_t} \eta_n(x, 0)[\xi_t] = - \left[ \D_{\eta_n}\Phi(x, 0, 0) \right]^{-1} \D_{\eta_t} \Phi(x, 0, 0)[\xi_t].
    \end{equation*}
    As noted before, $\D_{\eta_n}\Phi(x, 0, 0)$ is bijective since $x\in\M$. Besides, $\D_{\eta_t} \Phi(x, 0, 0) = \D\Phi(x)[\xi_t]=0$ since $\tangentM$ identifies as the kernel of $\D\Phi(x)$. Thus $\D_{\eta_t} \eta_n(x, 0) = 0$.

    Now, define a map $\R:\N_{\bar{x}}^1\times\N_0^2 \to \M$ by $\R(x, \eta_t) = x + \eta_t + \eta_n(x, \eta_t)$. This map has degree of smoothness $\C^k$ since $\eta_n$ is $\C^k$, satisfies $\R(x, 0) = x$ since $\eta_n(x, 0)=0$ and satisfies $\D_{\eta_t}\eta_n(x, 0) = I + \D_{\eta_t}(x, 0) = I$. Thus $\R$ defines a retraction on a neighborhood of $(\bar{x}, 0)$.

    We turn to show the second-order property of $\R$. Consider the smooth curve $\smoothcurve$ defined as $c(t) = \R(x, t\eta)$ for some $x\in\N_{\bar{x}}^1$, $\eta_t\in\tangentM\cap\N_0^2$. It's first derivative writes
    \begin{equation*}
        c'(t) = \eta + \D_{\eta_t}\eta_n(x, t\eta)[\eta] = \eta.
    \end{equation*}
    The acceleration of the curve $c$ is obtained by computing the derivative of $c'(\cdot)$ in the ambient space and then projecting onto $\tangentM$. Thus $c''(t)=0$ and in particular, $c''(0)=0$ which makes $\R$ a second-order retraction.
\end{proof}

\begin{lemma}\label{lemma:Euclidean_to_Riemannian}
    Consider a point $\bar{x}$ of a Riemannian manifold $\M$. For any $\varepsilon>0$, there exists a neighborhood $\U$ of $\bar{x}$ in $\M$ such that, for all $x\in\U$,
    \begin{equation*}
        (1-\varepsilon)\distM(x, \bar{x}) \le \|x-\bar{x}\| \le (1+\varepsilon)\distM(x, \bar{x}),
    \end{equation*}
    where $\|x-\bar{x}\|$ is the Euclidean distance in the ambient space.
\end{lemma}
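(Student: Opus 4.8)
The plan is to route the comparison through a conveniently chosen retraction, for which the already-established \cref{lem:InvRetractionApproxLog} controls the Riemannian distance, and then to compare the norm of the inverse retraction with the ambient Euclidean displacement. First I would equip $\M$ near $\bar{x}$ with a retraction $\R$ whose point map $\R_{\bar{x}}$ is $\C^{2}$ (for instance the projection retraction of \cref{prop:tangretraction}, the manifold being $\C^{2}$). Applying \cref{lem:InvRetractionApproxLog} to this retraction, for any $\varepsilon_{1}>0$ there is a neighborhood of $\bar{x}$ in $\M$ on which
\[
  (1-\varepsilon_{1})\distM(x,\bar{x}) \le \|\R_{\bar{x}}^{-1}(x)\| \le (1+\varepsilon_{1})\distM(x,\bar{x}).
\]
It then remains to compare $\|\R_{\bar{x}}^{-1}(x)\|$ with the ambient norm $\|x-\bar{x}\|$.

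The crux is a first-order identification of the tangent vector $\R_{\bar{x}}^{-1}(x)$ with the displacement $x-\bar{x}$. By the defining properties of a retraction we have $\R_{\bar{x}}(0)=\bar{x}$ and $\D\R_{\bar{x}}(0)=\mathrm{id}$; since moreover $\R_{\bar{x}}$ is $\C^{2}$, Taylor's theorem provides a constant $C>0$ such that $\|\R_{\bar{x}}(v)-\bar{x}-v\|\le C\|v\|^{2}$ for all $v$ near $0$ in $\tangentM[\bar{x}]$. Setting $v=\R_{\bar{x}}^{-1}(x)$, so that $x=\R_{\bar{x}}(v)$, the reverse triangle inequality yields $\bigl|\,\|x-\bar{x}\|-\|v\|\,\bigr|\le C\|v\|^{2}$. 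Since $\R_{\bar{x}}^{-1}$ is continuous with $\R_{\bar{x}}^{-1}(\bar{x})=0$, we have $\|v\|\to 0$ as $x\to\bar{x}$, so for any $\varepsilon_{2}>0$ there is a neighborhood of $\bar{x}$ on which
\[
  (1-\varepsilon_{2})\|\R_{\bar{x}}^{-1}(x)\| \le \|x-\bar{x}\| \le (1+\varepsilon_{2})\|\R_{\bar{x}}^{-1}(x)\|.
\]

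Finally I would combine the two displays on the intersection of the corresponding neighborhoods, obtaining
\[
  (1-\varepsilon_{1})(1-\varepsilon_{2})\distM(x,\bar{x}) \le \|x-\bar{x}\| \le (1+\varepsilon_{1})(1+\varepsilon_{2})\distM(x,\bar{x}).
\]
Given the target $\varepsilon>0$, it suffices to pick $\varepsilon_{1},\varepsilon_{2}$ small enough that $(1+\varepsilon_{1})(1+\varepsilon_{2})\le 1+\varepsilon$ and $(1-\varepsilon_{1})(1-\varepsilon_{2})\ge 1-\varepsilon$, and to take $\U$ to be the intersection neighborhood. The only substantive point is the second step --- that the inverse retraction and the Euclidean displacement agree to first order --- which is immediate from $\D\R_{\bar{x}}(0)=\mathrm{id}$ and $\C^{2}$ smoothness; I therefore expect no genuine obstacle beyond the routine bookkeeping of nested neighborhoods and multiplicative constants.
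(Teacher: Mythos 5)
Your proposal is correct and follows essentially the same route as the paper's proof: both pass through the projection retraction of \cref{prop:tangretraction}, use \cref{lem:InvRetractionApproxLog} to compare $\|\R_{\bar{x}}^{-1}(x)\|$ with $\distM(x,\bar{x})$, and use the first-order Taylor expansion of $\R_{\bar{x}}$ at $0$ to compare $\|\R_{\bar{x}}^{-1}(x)\|$ with $\|x-\bar{x}\|$, before combining the two multiplicative estimates. If anything, your write-up is slightly more explicit than the paper's (which omits the $\eta$ term in its displayed expansion, evidently a typo), but there is no substantive difference in the argument.
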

\begin{proof}
    Let $\bar{x}$, $x$ denote two close points on $\M$. Consider the tangential retraction introduced in \cref{prop:tangretraction}. As a retraction, it satisfies:
    \begin{equation*}
        \R_{\bar{x}}(\eta) = \R_{\bar{x}}(0) + \D\R_{\bar{x}}(0)[\eta] + \mathcal O(\|\eta\|^2) = \bar{x} + \mathcal O(\|\eta\|^2).
    \end{equation*}
    Taking $x=\R_{\bar{x}}(\eta)$ allows to write $x = \bar{x} + \mathcal O(\|\R_{\bar{x}}^{-1}(x)\|^2)$, so that for any small $\varepsilon_1>0$, there exists a small enough neighborhood $\U_1\subset\U$ of $\bar{x}$ in $\M$ such that
    \begin{equation*}
        (1-\varepsilon_1) \|\R_{\bar{x}}^{-1}(x)\| \le \|x-\bar{x}\| \le (1+\varepsilon_1)\|\R_{\bar{x}}^{-1}(x)\|.
    \end{equation*}
    By \cref{lem:InvRetractionApproxLog}, for $\varepsilon_2>0$ small enough, there exists a neighborhood $\U_2\subset\U$ of $\bar{x}$ such that,
    \begin{equation*}
        (1-\varepsilon_2)\distM(x, \bar{x}) \le \|\R^{-1}_{\bar{x}}(x)\| \le (1+\varepsilon_2)\distM(x, \bar{x}).
    \end{equation*}
    With $\varepsilon_1$, $\varepsilon_2$ such that $1-\varepsilon=(1-\varepsilon_1)(1-\varepsilon_2)$, we combine the two estimates to conclude.%
\end{proof}

\subsection{Two technical results on Riemannian descent algorithms}\label{appendix:soundnessRiemanLS}

We provide here two technical results used in the proofs of Section\;\ref{sec:specificationsManUp}.
First, \Cref{th:unitstepsuperlinearcv} adapts \cite[Th. 4.16]{bonnans2006numerical} to the Riemannian setting.
Second, \Cref{{lemma:RiemannianNewtonCG_descent}} 
adapts the proof of \cite[Lemma A.2]{dembo1983truncated} to the Riemannian setting.

\begin{theorem}[Soundness of the Riemannian \revise{line search}]\label{th:unitstepsuperlinearcv}
    Consider a manifold $\M$ equipped with a retraction $\R$ and a twice differentiable function $\funman:\M\to\RR$ that admits a strong local minimizer $x^\star$, that is, a point such that $\Hess \funman(x^\star)$ is positive definite. If $x$ is close to $x^\star$, $\eta$ brings a superlinear improvement towards $x^\star$, that is $\distM(\R_x(\eta), x^\star) = o(\distM(x, x^\star))$ as $x\to x^\star$, and $0<m_1<1/2$, then $\eta$ is acceptable by the Armijo rule\;\eqref{eq:Armijo} with unit stepsize $\alpha=1$.
\end{theorem}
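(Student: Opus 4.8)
The plan is to reduce the statement to the classical Euclidean result (essentially \cite[Th.~4.16]{bonnans2006numerical}) by passing to a local chart of $\M$ around $x^\star$, where every Riemannian object in the Armijo condition \eqref{eq:Armijo} acquires a transparent coordinate expression. First I would fix a $\C^2$ parametrization $\psi:V\subset\RR^p\to\M$ with $\psi(0)=x^\star$ and $\D\psi(0)$ invertible, and set $\hat\funman=\funman\circ\psi$, $u=\psi^{-1}(x)$, $u'=\psi^{-1}(\R_x(\eta))$, with $\hat\eta$ the coordinate representation of $\eta$, i.e. $\eta=\D\psi(u)[\hat\eta]$. Since the differential of $\funman$ is chart-independent, the three terms of \eqref{eq:Armijo} read $\funman(x)=\hat\funman(u)$, $\funman(\R_x(\eta))=\hat\funman(u')$ and $\langle\grad\funman(x),\eta\rangle=\nabla\hat\funman(u)^\top\hat\eta$; hence \eqref{eq:Armijo} with $\alpha=1$ is exactly the Euclidean Armijo inequality for $\hat\funman$ at $u$ along $\hat\eta$. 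A $\C^2$ chart is locally bi-Lipschitz onto its image, so combined with \cref{lemma:Euclidean_to_Riemannian} one gets $\|u\|\asymp\distM(x,x^\star)$ and $\|u'\|\asymp\distM(\R_x(\eta),x^\star)$, and the superlinear hypothesis becomes $\|u'\|=o(\|u\|)$.

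Two facts make this reduction effective, and establishing them carefully is the main obstacle. First, since $x^\star$ is a strong local minimizer we have $\grad\funman(x^\star)=0$, so $\nabla\hat\funman(0)=0$; moreover the chain rule gives $\nabla^2\hat\funman(0)[a,b]=\langle\Hess\funman(x^\star)[\D\psi(0)a],\D\psi(0)b\rangle$, the term involving $\D^2\psi(0)$ dropping out precisely because $\grad\funman(x^\star)=0$, so that $\hat H:=\nabla^2\hat\funman(0)\succ0$. Second, the defining property $\D\R_x(0)=\mathrm{id}$ implies that the coordinate retraction $\hat\eta\mapsto\psi^{-1}(\R_{\psi(u)}(\D\psi(u)[\hat\eta]))$ has identity differential at $0$, whence $u'=u+\hat\eta+O(\|\hat\eta\|^2)$ uniformly for $u$ near $0$. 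Together with $\|u'\|=o(\|u\|)$ this yields $\hat\eta=-u+o(\|u\|)$ and $\|\hat\eta\|\asymp\|u\|$; note this uses only the first-order retraction property, so no second-order retraction is required.

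It then remains to run the routine quadratic computation. Expanding around $0$, where $\nabla\hat\funman(0)=0$, one has $\hat\funman(u)-\hat\funman(0)=\tfrac12 u^\top\hat H u+o(\|u\|^2)$, while $\hat\funman(u')-\hat\funman(0)=o(\|u\|^2)$ by the superlinear estimate, and $\nabla\hat\funman(u)^\top\hat\eta=(\hat H u+o(\|u\|))^\top(-u+o(\|u\|))=-u^\top\hat H u+o(\|u\|^2)$. Subtracting, the Armijo residual becomes
\[
\hat\funman(u')-\hat\funman(u)-m_1\,\nabla\hat\funman(u)^\top\hat\eta=-\Big(\tfrac12-m_1\Big)u^\top\hat H u+o(\|u\|^2)\le-\Big(\tfrac12-m_1\Big)\lambda_{\min}(\hat H)\,\|u\|^2+o(\|u\|^2).
\]
Since $m_1<1/2$ and $\hat H\succ0$, this is strictly negative once $x$ is close enough to $x^\star$, which is exactly \eqref{eq:Armijo} with $\alpha=1$. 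I expect the genuine work to lie entirely in the second paragraph (the critical-point cancellation giving $\hat H\succ0$, and the translation of the superlinear hypothesis plus $\D\R_x(0)=\mathrm{id}$ into $\hat\eta=-u+o(\|u\|)$); the final subtraction is then immediate.
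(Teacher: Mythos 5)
Your proof is correct and follows essentially the same route as the paper's: both pass to local coordinates centered at $x^\star$, use the first-order retraction property $\D\R_x(0)=\mathrm{id}$ to identify the step with the displacement $u'-u$ (up to higher order), and compare second-order Taylor expansions at the critical point to show the Armijo residual equals $-(\tfrac12-m_1)$ times a positive definite quadratic form in $\distM(x,x^\star)$ up to $o(\distM(x,x^\star)^2)$. The only difference is cosmetic: the paper works in normal coordinates via $\exp_{x^\star}$ and $\log_{x^\star}$ (so Riemannian distances become exact Euclidean ones and the Hessian identification is automatic along geodesics), whereas you use an arbitrary $\C^2$ chart together with the chart-independence of the Hessian at a critical point and the bi-Lipschitz equivalence of distances (\cref{lemma:Euclidean_to_Riemannian}) — both are sound.
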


\begin{proof}

    Let $x, \eta\in\tangentBundle$ denote a pair such that $x$ is close to $x^\star$ and $\distM(\R_x(\eta), x^\star) = o(\distM(x, x^\star))$. For convenience, let $x_+ = \R_x(\eta)$ denote the next point.

    Following \cite{absil2009optimization} (see e.g. the proof of Th. 6.3.2), we work in local coordinates around $x^\star$, representing any point $x\in\M$ by $\widehat{x} = \log_{x^\star}(x)$ and any tangent vector $\eta\in\tangentM$ by $\widehat{\eta}_x = \D \log_{x^\star}(x)[\eta]$. The function $\funman$ is represented by $\widehat{\funman} = \funman\circ\exp_{x^\star}:\tangentM[x^\star]\to\bbR$. Defining the coordinates via the logarithm grants the useful property that the Riemannian distance of any two points $x, y\in\M$ matches the euclidean distance between their representatives: $\distM(x, y) = \|\widehat{x}-\widehat{y}\|$. Besides, there holds
    \begin{equation}\label{eq:gradHess_to_localcoords}
        \D \funman(x)[\eta]=\D \widehat{\funman}(\widehat{x})[\widehat{\eta}] \quad \text{ and } \quad \Hess \funman(x)[\eta, \eta]=\D \widehat{\funman}(\widehat{x})[\widehat{\eta}, \widehat{\eta}].
    \end{equation}
    Indeed, $\D \funman(x)[\eta] = (\funman\circ\geocurve)'(0)$ and $\Hess \funman(x)[\eta, \eta] = (\funman\circ\geocurve)''(0)$, where $\geocurve$ denotes the geodesic curve defined by $\widehat{\geocurve}(t) = \widehat{x} + t\widehat{\eta}$. Using $\funman\circ\geocurve=\widehat{\funman}\circ\widehat{\geocurve}$, one obtains the result.

    \underline{Step 1.} We derive an approximation of $\D \funman(x)[\eta] = \langle \grad \funman(x), \eta\rangle$ in terms of $\D^2 \widehat{\funman}(\widehat{x^\star})[\widehat{x}-\widehat{x^\star}]^2$. To do so, we go through the intermediate quantity $\D \widehat{\funman}(\widehat{x}) [\widehat{x_+}-\widehat{x}]$, and handle precisely the $o(\cdot)$ terms. By smoothness of $\widehat{\funman}$ and since $\D \widehat{\funman}(\widehat{x^\star})=0$, Taylor's formula for $\D \widehat{\funman}$ writes
    \begin{align*}
        \D \widehat{\funman}(\widehat{x}) [\widehat{x_+}-\widehat{x}] &= \D^2 \widehat{\funman}(\widehat{x^\star})[\widehat{x_+}-\widehat{x}, \widehat{x}-\widehat{x^\star}] + o(\|\widehat{x}-\widehat{x^\star}\|^2)\\
        &= -\D^2 \widehat{\funman}(\widehat{x^\star})[\widehat{x}-\widehat{x^\star}]^2 + \D^2 \widehat{\funman}(\widehat{x^\star})[\widehat{x_+}-\widehat{x^\star}, \widehat{x}-\widehat{x^\star}] + o(\|\widehat{x}-\widehat{x^\star}\|^2)\\
        &= -\D^2 \widehat{\funman}(\widehat{x^\star})[\widehat{x}-\widehat{x^\star}]^2 + o(\|\widehat{x}-\widehat{x^\star}\|^2),
    \end{align*}
    where, in the last step, we used that $\|\widehat{x_+}-\widehat{x^\star}\| = o(\|\widehat{x}-\widehat{x^\star}\|)$ to get that $\|\D^2 \widehat{\funman}(\widehat{x^\star})[\widehat{x_+}-\widehat{x^\star}, \widehat{x}-\widehat{x^\star}]\| = \|\D^2 \widehat{\funman}(\widehat{x^\star})\| \|\widehat{x_+}-\widehat{x^\star}\|\|\widehat{x}-\widehat{x^\star}\| = o(\|\widehat{x}-\widehat{x^\star}\|^2)$.
    We now turn to show that $\D \widehat{\funman}(\widehat{x}) [\widehat{x_+}-\widehat{x}]$ behaves as $\D \funman(x)[\eta]$ up to $o(\|\widehat{x_+}-\widehat{x}\|^2)$. Since $\D \funman(x)[\eta]=\D\widehat{\funman}(\widehat{x})[\widehat{\eta}]$ by \eqref{eq:gradHess_to_localcoords}, there holds:
    \begin{equation*}
        \|\D \funman(x)[\eta] - \D\widehat{\funman}(\widehat{x})[\widehat{x_+} - \widehat{x}]\| = \|\D\widehat{\funman}(\widehat{x})[\widehat{\eta} - (\widehat{x_+} - \widehat{x})]\| \le \|\D\widehat{\funman}(\widehat{x}) \| \|\widehat{\eta} - (\widehat{x_+} - \widehat{x})\|.
    \end{equation*}

    As $\funman$ is twice differentiable and $\exp$ is $\C^\infty$, $\widehat{\funman}$ is twice differentiable as well. In particular its derivative is locally Lipschitz continuous, so that for $\widehat{x}$ near $\widehat{x^\star}$, we obtain a first estimate:
    \begin{equation*}
        \|\D\widehat{\funman}(\widehat{x})\| = \|\D\widehat{\funman}(\widehat{x}) - \D\widehat{\funman}(\widehat{x^\star})\| = \mathcal O (\| \widehat{x}-\widehat{x^\star} \|).
    \end{equation*}

    Besides, the following estimate holds $\|\widehat{\eta} - (\widehat{x_+} - \widehat{x})\| = o (\| \widehat{x}-\widehat{x^\star} \|)$. Indeed, as the function $\log_{x^\star}\circ\R_x : \tangentM\to\tangentM[x^\star]$ is differentiable, there holds for $\eta\in\tangentM$ small,
    \begin{equation*}
        \log_{x^\star}(\R_x(\eta)) = \log_{x^\star}(\R_x(0)) + \D\log_{x^\star}(\R_x(0))[\D\R_x(0)[\eta]] + o(\|\eta\|),
    \end{equation*}
    which simplifies to $\widehat{x_+} = \widehat{x} + \widehat{\eta} + o(\|\eta\|)$. \Cref{lem:InvRetractionApproxLog} allows to write $\|\eta\| = \|\R_x^{-1}(x_+)\| = \mathcal O(\distM(x, x_+))$. Using the triangular inequality and the assumption that $\distM(x_+, x^\star) = o(\distM(x, x^*))$ we get
    \begin{equation*}
        \distM(x, x_+) \le \distM(x, x^\star) + \distM(x^\star, x_+) = \mathcal O(\distM(x, x^*)) = \mathcal O(\|\widehat{x}-\widehat{x^\star}\|),
    \end{equation*}
    so that the second estimate holds.

    Combining the two above estimates allows to conclude that
    \begin{equation*}
        \|\D \funman(x)[\eta] - \D\widehat{\funman}(\widehat{x})[\widehat{x_+} - \widehat{x}]\| = o(\|\widehat{x}-\widehat{x^\star}\|^2),
    \end{equation*}
    so that overall,
    \begin{equation*}
        \D \funman(x)[\eta] = \D \widehat{\funman}(\widehat{x}) [\widehat{x_+}-\widehat{x}] + o(\|\widehat{x}-\widehat{x^\star}\|^2) = -\D^2 \widehat{\funman}(\widehat{x^\star})[\widehat{x}-\widehat{x^\star}]^2 + o(\|\widehat{x}-\widehat{x^\star}\|^2).
    \end{equation*}

    Using that $\|\widehat{x}-\widehat{x^\star}\| = \distM(x, x^\star)$ and $\D^2 \widehat{\funman}(\widehat{x^\star}) = \Hess \funman(x^\star)$ \eqref{eq:gradHess_to_localcoords}, we obtain
    \begin{equation}\label{eq:proofunitstepsize_gradhess}
        \D \funman(x)[\eta] = -\Hess \funman(x^\star)[\widehat{x}-\widehat{x^\star}]^2 + o(\distM(x, x^\star)^2).
    \end{equation}

    \underline{Step 2.} The function $\funman$ admits a second-order development around $x^\star$: applying \cref{eq:manifoldSecondOrderDevSecRetraction} with the exponential map $\exp_{x^\star}$ as a second-order retraction yields
    \begin{equation}\label{eq:proofunitstepsize_fdev}
        \funman(x) = \funman(x^\star) + \frac{1}{2} \Hess \funman(x^\star)[\widehat{x}-\widehat{x^\star}]^2 + o(\distM(x, x^\star)^2),
    \end{equation}
    where we used that $\distM(x, x^\star) = \|\log_{x^\star}(x)-\log_{x^\star}(x^\star)\|$. Denote $0 < l \le L$ the lower and upper eigenvalues of $\Hess \funman(x^\star)$. The combination $\eqref{eq:proofunitstepsize_fdev} + m_1 \eqref{eq:proofunitstepsize_gradhess}$ writes
    \begin{align*}
        \funman(x) + m_1 \D \funman(x)[\eta] &= \funman(x^\star) + (\frac{1}{2}-m_1) \Hess \funman(x^\star)[\widehat{x}-\widehat{x^\star}]^2 + o(\distM(x, x^\star)^2) \\
        &\ge \funman(x^\star) + (\frac{1}{2}-m_1) l \distM(x, x^\star)^2 + o(\distM(x, x^\star)^2),
    \end{align*}

    Let $\varepsilon >0$ such that $\frac{1}{2}L \varepsilon^2  < (\frac{1}{2}-m_1) l$. As $\distM(x_+, x^\star)=o(\distM(x, x^\star))$, for $x$ close enough to $x^\star$ there holds $\distM(x_+, x^\star) \le \varepsilon \distM(x, x^\star)$. Combining this with the second-order development of $f$ at $x_+$, there holds:
    \begin{align*}
        \funman(x_+) &= \funman(x^\star) + \frac{1}{2} \Hess \funman(x^\star)[\widehat{x_+}-\widehat{x^\star}]^2 + o(\distM(x_+, x^\star)^2) \\
        &\le \funman(x^\star) + \frac{1}{2}L \distM(x_+, x^\star)^2 + o(\distM(x_+, x^\star)^2)\\
        &\le \funman(x^\star) + \frac{1}{2}L \varepsilon^2 \distM(x, x^\star)^2 + o(\distM(x, x^\star)^2).
    \end{align*}

    Subtracting the two estimates yields
    \begin{equation*}
        \funman(x_+) - (\funman(x) + m_1 \D \funman(x)[\eta]) \le \left(\frac{1}{2}L \varepsilon^2 - (\frac{1}{2}-m_1) l \right) \distM(x, x^\star)^2 + o(\distM(x, x^\star)^2),
    \end{equation*}
    which ensures that the Armijo condition is satisfied.

\end{proof}

\begin{lemma}[Riemannian Newton-CG a descent direction]\label{lemma:RiemannianNewtonCG_descent}
    Let \cref{asm:fun} hold and consider a manifold $\M$ and a point $x\in\M$. If $\funtot$ is twice differentiable on $\M$ at $x$ and $x$ is not a stationary point of $\funtot$, then there holds:
    \begin{equation*}%
        \langle \grad \funtot(x), d \rangle \le -\min(1, \|\Hess \funtot(x)\|^{-1}) \|\grad \funtot(x)\|^2,
    \end{equation*}
    where $d$ was obtained solving \eqref{eq:inexactNewtonEquation} with any forcing parameter $\eta$.
\end{lemma}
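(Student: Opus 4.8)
The plan is to exploit the fact that the direction $d$ is not an arbitrary solution of \eqref{eq:inexactNewtonEquation}, but the output of the inner conjugate gradient (CG) solver of \cref{alg:ManUpTruncatedNewton} initialized at the origin; the bound is then inherited from the very first CG step, exactly as in \cite[Lem.~A.2]{dembo1983truncated}. Since the whole inner solve takes place at the single point $x$, within the fixed inner-product space $(\tangentM,\langle\cdot,\cdot\rangle)$ on which the self-adjoint operator $H\triangleq\Hess \funtot(x)$ acts, the Riemannian adaptation is essentially cost-free: the Dembo--Steihaug linear algebra transfers verbatim to $\tangentM$. Write $g\triangleq\grad \funtot(x)$, which is nonzero since $x$ is not stationary, and recall that CG applied to $Hd=-g$ from $d_0=0$ produces iterates $d_j=\sum_{i<j}\alpha_i p_i$, residuals $r_j=-g-H d_j$, and search directions $p_j$, with $r_0=p_0=-g$. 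Note also that the inexactness test $\|g+Hd\|\le\eta\|g\|$ cannot hold at $d_0=0$ (it would force $\eta\ge 1$), so at least one genuine step is always taken.

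First I would record the standard CG identities in $\tangentM$: the residuals are mutually orthogonal, $p_j=r_j+\beta_j p_{j-1}$ with $\beta_j=\|r_j\|^2/\|r_{j-1}\|^2$, and $\alpha_j=\|r_j\|^2/\langle p_j,Hp_j\rangle$. From $\langle r_0,p_j\rangle=\langle r_0,r_j\rangle+\beta_j\langle r_0,p_{j-1}\rangle$ and residual orthogonality, a one-line induction gives $\langle r_0,p_j\rangle=\|r_j\|^2$. Substituting into $\langle g,d_j\rangle=-\sum_{i<j}\alpha_i\langle r_0,p_i\rangle$ yields the key identity
\[
    \langle g,d_j\rangle=-\sum_{i=0}^{j-1}\alpha_i\|r_i\|^2 .
\]

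Then I would conclude by isolating the first term. As long as CG encounters positive curvature (so every $\alpha_i>0$), all summands are nonnegative and $\langle g,d_j\rangle\le -\alpha_0\|g\|^2$. Since $\alpha_0=\|g\|^2/\langle g,Hg\rangle$ and $\langle g,Hg\rangle\le\|H\|\,\|g\|^2$ by Cauchy--Schwarz, we obtain $\alpha_0\ge\|H\|^{-1}\ge\min(1,\|H\|^{-1})$, hence $\langle g,d\rangle\le-\min(1,\|H\|^{-1})\|g\|^2$ for the returned direction $d=d_j$. The remaining case is negative curvature detected at the first inner step, i.e.\ $\langle g,Hg\rangle\le 0$: the truncated solver then returns the steepest-descent direction $d=-g$ (the first CG search direction), for which $\langle g,d\rangle=-\|g\|^2\le-\min(1,\|H\|^{-1})\|g\|^2$ trivially. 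Both cases give the announced inequality.

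The main obstacle is less the algebra than being precise about what the truncated solver actually returns across its possible termination modes --- inexactness criterion met, iteration cap reached, or (quasi-)negative curvature detected --- and verifying that in each of these the returned $d$ is a CG iterate $d_j$ with $j\ge 1$ whose preceding coefficients $\alpha_i$ are all positive, so that the first-term bound applies. The genuinely separate situation is negative curvature at the very first direction, covered by the $d=-g$ fallback; this is precisely why the sharp factor is $\min(1,\|H\|^{-1})$ rather than $\|H\|^{-1}$, the value $1$ being attained in the fallback when $\|H\|<1$.
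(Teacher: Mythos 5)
Your proof is correct and follows essentially the same route as the paper: the paper's proof simply invokes the analysis of Dembo--Steihaug \cite[Lem.~A.2]{dembo1983truncated} applied to the CG resolution of \eqref{eq:inexactNewtonEquation} on the Euclidean space $\tangentM$, which is exactly the argument you have unpacked (CG identities on the tangent space, the bound via $\alpha_0\ge\|\Hess \funtot(x)\|^{-1}$, and the steepest-descent fallback under first-step negative curvature explaining the $\min(1,\cdot)$ factor). Your explicit observation that $d$ must be the CG output rather than an arbitrary solution of the inexact Newton equation is a useful precision that the paper leaves implicit.
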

\begin{proof}
    The result is obtained by applying the analysis of \cite[Lemma A.2]{dembo1983truncated} to the approximate resolution of \eqref{eq:inexactNewtonEquation} on the euclidean space $\tangentM$, with constant specified according to the proof.
\end{proof}

\section{Complements to the experimental section}

\subsection{Oracles of \cref{sec:exp_2d}}

We detail here the oracles of $\funspart(x)\triangleq 2x_1^2+x_2^2$ and $\funnspart(x) \triangleq |x_1^2-x_2|$:
\begin{itemize}
    \item \emph{proximity operator:} For $\gamma<1/2$, there holds
    \begin{align*}
        \prox_{\gamma \funnspart}(x) = \begin{cases}
            (\frac{x_1}{1+2\gamma}, x_2+\gamma) &\text{ if } x_2 \le \frac{x_1^2}{(1+2\gamma)^2} - \gamma \\
            (\frac{x_1}{1+4\gamma t-2\gamma}, x_2 + 2\gamma t - \gamma) &\text{ if } \frac{x_1^2}{(1+2\gamma)^2} - \gamma \le x_2 \le \frac{x_1^2}{(1-2\gamma)^2} + \gamma \\
            (\frac{x_1}{1-2\gamma}, x_2-\gamma) &\text{ if } \frac{x_1^2}{(1-2\gamma)^2} + \gamma \le x_2
        \end{cases}
    \end{align*}
    where $t$ solves $x_2^2 + (-2\gamma t + \gamma - x_2)(1+4\gamma t -2\gamma)^2 = 0$.
    \item \emph{Riemannian gradient and Hessian:} Since $\funnspart$ is identically null on $\M$, for any point $(x, \eta)\in\tangentBundle$, $
        \grad \funnspart(x) = 0  \text{ and }  \Hess \funnspart(x)[\eta] = 0
    $. Moreover, Euclidean gradient and Hessian-vector product are converted to Riemannian ones using equations \eqref{eq:egrad_to_rgrad} and \eqref{eq:ehess_to_rhess}:
    \begin{align*}
        \grad \funspart(x) &= \proj_{x}(\nabla \funspart(x)) \\
        \Hess \funspart(x)[\eta] &= \proj_{x}\left( \nabla^2 \funspart(x)[\eta] - \begin{pmatrix}
            2\eta_1 \\ 0
        \end{pmatrix} \left\langle \nabla \funspart(x), \begin{pmatrix}
            2x_1 \\ -1
        \end{pmatrix}\right\rangle \frac{1}{1+4x_1^2} \right),
    \end{align*}
    and the orthogonal projection onto $\tangentM$ writes
    \begin{equation*}
        \proj_{x}(d) = d - \left\langle d, \begin{pmatrix}
            2x_1 \\ -1
        \end{pmatrix}\right\rangle \frac{1}{1+4x_1^2} \begin{pmatrix}
            2x_1 \\ -1
        \end{pmatrix}.
    \end{equation*}
\end{itemize}

\subsection{Differentiating the singular-value decomposition}\label{sec:diffsvd}

We establish the expressions of the derivative of the matrices involved in the singular value decomposition. These results may be seen as part of folklore, but, up to our knowledge, there are not explicitly written in the literature. We need them for the computations related to trace-norm regularized problems.

\begin{lemma}\label{lem:diffsvd}
    Consider the manifold of fixed rank matrices $\M_r$, a pair $x, \eta\in\tangentBundle$ and a smooth curve $\smoothcurve : I\to\M_r$ such that $\smoothcurve(0)=x$, $\smoothcurve'(0)=\eta$. Besides, let $U(t)$, $\Sigma(t)$, $V(t)$ denote smooth curves of $St(m, r)$, $\RR^{r\times r}$, $St(n, r)$ such that $\gamma(t) = U(t)\Sigma(t) V(t)^\top$. The derivatives of the decomposition factors at $t=0$ write
    \begin{align*}
        U'      & = U \left( F \circ \left[ U^\top \eta V \Sigma + \Sigma V^\top \eta^\top U  \right]\right) + (I_m - U U^\top) \eta V \Sigma^{-1}     \\
        V'      & = V \left( F \circ \left[ \Sigma U^\top \eta V + V^\top \eta^\top U \Sigma  \right]\right) + (I_n - V V^\top) \eta^\top U\Sigma^{-1} \\
        \Sigma' & = I_k \circ \left[ U^\top \eta V \right],
    \end{align*}
    where $I_k$ is the identity of $\RR^{k\times k}$, $\circ$ denotes the Hadamard product and $F\in\RR^{r\times r}$ is such that $F_{ij} = 1 / (\Sigma_{jj}^2 - \Sigma_{ii}^2)$ if $\Sigma_{jj} \neq \Sigma_{ii}$, and $F_{ij}=0$ otherwise. Equivalently, when the tangent vector is represented as $\eta = UMV^\top + U_pV^\top + UV_p^\top$, the above expressions simplify to
    \begin{align*}
        U'      & = U \left( F \circ \left[ M \Sigma + \Sigma M^\top  \right]\right) + U_p \Sigma^{-1}     \\
        V'      & = V \left( F \circ \left[ \Sigma M + M^\top \Sigma  \right]\right) + V_p \Sigma^{-1} \\
        \Sigma' & = I_k \circ M,
    \end{align*}
\end{lemma}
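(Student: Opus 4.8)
The plan is to differentiate the three relations defining the decomposition at $t=0$ and solve for the three derivatives. Write $U=U(0)$, $\Sigma=\Sigma(0)$, $V=V(0)$, recall $x=U\Sigma V^\top$ and $\eta=\smoothcurve'(0)$, and keep in mind that $\Sigma$ is diagonal and invertible (the latter because $x$ has rank exactly $r$). Differentiating the factorization $\smoothcurve=U\Sigma V^\top$ gives $\eta = U'\Sigma V^\top + U\Sigma' V^\top + U\Sigma V'^\top$, while differentiating the Stiefel constraints $U^\top U=I_r$ and $V^\top V=I_r$ shows that $A\triangleq U^\top U'$ and $B\triangleq V^\top V'$ are skew-symmetric, since $A+A^\top=(U^\top U)'=0$ and similarly for $B$.

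First I would extract $\Sigma'$. Left-multiplying the differentiated factorization by $U^\top$ and right-multiplying by $V$, and using $U^\top U=V^\top V=I_r$ together with $V'^\top V=-B$, yields $U^\top\eta V = A\Sigma + \Sigma' - \Sigma B$. Because $A$ and $B$ are skew their diagonals vanish, hence so do the diagonals of $A\Sigma$ and $\Sigma B$; as $\Sigma'$ is diagonal, taking the diagonal part gives $\Sigma' = I_k\circ(U^\top\eta V)$, the third claimed identity. For the off-diagonal part I would pair this relation with its transpose $V^\top\eta^\top U=(U^\top\eta V)^\top=-\Sigma A+\Sigma'+B\Sigma$. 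For each fixed pair $i\neq j$ the $(i,j)$ entries of the two relations form the linear system
\begin{equation*}
  \begin{pmatrix} \Sigma_{jj} & -\Sigma_{ii} \\ -\Sigma_{ii} & \Sigma_{jj} \end{pmatrix}
  \begin{pmatrix} A_{ij} \\ B_{ij} \end{pmatrix}
  = \begin{pmatrix} (U^\top\eta V)_{ij} \\ (V^\top\eta^\top U)_{ij} \end{pmatrix},
\end{equation*}
whose determinant is $\Sigma_{jj}^2-\Sigma_{ii}^2$. Inverting and collecting entries into matrix form produces $A=F\circ(U^\top\eta V\Sigma+\Sigma V^\top\eta^\top U)$ and $B=F\circ(\Sigma U^\top\eta V+V^\top\eta^\top U\Sigma)$, with $F$ exactly the matrix of reciprocals $1/(\Sigma_{jj}^2-\Sigma_{ii}^2)$ from the statement.

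It then remains to reconstruct the full derivatives from their tangential and normal parts relative to $\operatorname{Im}U$ and $\operatorname{Im}V$. Decomposing $U'=UU^\top U' + (I_m-UU^\top)U' = UA + (I_m-UU^\top)U'$, I would obtain the normal part by right-multiplying $\eta=U'\Sigma V^\top+U\Sigma'V^\top+U\Sigma V'^\top$ by $V$ and applying $I_m-UU^\top$: every term carrying $U$ on the left is annihilated, leaving $(I_m-UU^\top)\eta V=(I_m-UU^\top)U'\Sigma$, hence $(I_m-UU^\top)U'=(I_m-UU^\top)\eta V\Sigma^{-1}$. This yields the stated expression for $U'$, and the symmetric computation (multiply $\eta^\top$ on the right by $U$ and apply $I_n-VV^\top$) yields $V'$. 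The simplified forms follow by substituting $\eta=UMV^\top+U_pV^\top+UV_p^\top$ and using $U^\top U_p=0$, $V^\top V_p=0$, which give $U^\top\eta V=M$, $(I_m-UU^\top)\eta V=U_p$, and $(I_n-VV^\top)\eta^\top U=V_p$.

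The one delicate point I expect is the case of coincident singular values, $\Sigma_{ii}=\Sigma_{jj}$ with $i\neq j$, where the $2\times2$ system above is singular and $A_{ij},B_{ij}$ are not determined by $\eta$ alone — this reflects the gauge freedom of the SVD factors on a repeated-value block. The convention $F_{ij}=0$ selects a canonical smooth branch, and I would check that the hypothesis that smooth factor curves $U(t),\Sigma(t),V(t)$ exist forces the symmetric part of $M$ to vanish on such a block, so that the right-hand side of the singular system is consistent and the convention returns a valid derivative. Away from this degeneracy everything reduces to the routine linear solve above, so this consistency check is the only genuine obstacle.
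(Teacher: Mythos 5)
Your proof is correct and follows essentially the same route as the paper's: differentiate the factorization, use skew-symmetry of $U^\top U'$ and $V^\top V'$ coming from the Stiefel constraints, read $\Sigma'$ off the diagonal, solve for the skew parts from the off-diagonal equations (your entrywise $2\times 2$ solve is the same linear algebra as the paper's step of forming the equation times $\Sigma$ plus $\Sigma$ times its transpose to decouple), and recover the normal components by projection, with your $(I_m-UU^\top)$ playing the role of the paper's $U_\perp B_U$ term. Your closing caveat about coincident singular values flags a genuine degeneracy that the paper's proof also passes over silently (there the equation $[\Omega_U]_{ij}(\Sigma_{jj}^2-\Sigma_{ii}^2)=[\cdot]_{ij}$ likewise leaves $[\Omega_U]_{ij}$ undetermined when $\Sigma_{ii}=\Sigma_{jj}$), so it is a point of extra care on your side rather than a divergence.
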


\begin{proof}
    We consider the curve $\gamma$ and all components and derivatives at $t=0$, therefore we don't mention evaluation time. Differentiating $\gamma = U\Sigma V^\top$ yields
    \begin{equation}\label{eq:svddiffDerGlobalCurve}
        \eta = U'\Sigma V^\top + U\Sigma' V^\top + U\Sigma V'^\top
    \end{equation}
    As a tangent vector to the Stiefel manifold at point $U$, $U'$ can be expressed as \cite[Ex. 3.5.2]{absil2009optimization}
    \begin{equation}\label{eq:svddiffUderDecomp}
        U'=U\Omega_U + U_\perp B_U,
    \end{equation}
    where $\Omega_U\in\RR^{r\times r}$ is a skew-symmetric matrix, $B_U\in\RR^{m-r \times m-r}$, and $U_\perp$ is any matrix such that $U^\top U_\perp = 0$ and $U_\perp^\top U_\perp = I_{m-r}$.
    Similarly, $V' = V\Omega_V + V_\perp B_V$, where $\Omega_V\in\RR^{r\times r}$ is skew-symmetric, $B_V\in\RR^{n-r \times n-r}$, and $V_\perp$ is any matrix such that $V^\top V_\perp = 0$ and $V_\perp^\top V_\perp = I_{n-r}$.

    Computing $U^\top \times \eqref{eq:svddiffDerGlobalCurve} \times V$ yields
    \begin{equation*}
        U^\top \eta V = \Omega_U\Sigma + \Sigma' + \Sigma \Omega_V^\top.
    \end{equation*}
    Looking at the diagonal elements of this equation yields the derivative of the diagonal component of $\eta$. This is done by taking the Hadamard product of both sides of previous equation with the identity matrix of $\RR^{r\times r}$, and writes
    \begin{equation*}%
        \Sigma' = I_r\circ \left[ U^\top \eta V \right].
    \end{equation*}
    The off-diagonal elements of this equation write
    \begin{equation} \label{eq:svddiffOffDiag}
        \bar I_r\circ \left[ U^\top \eta V \right] = \Omega_U\Sigma + \Sigma \Omega_V^\top,
    \end{equation}
    where $\bar I_r$ %
    has zeros on the diagonal and ones elsewhere. Adding $\eqref{eq:svddiffOffDiag}\Sigma$ and $\Sigma\eqref{eq:svddiffOffDiag}^\top$ yields
    \begin{equation*}
        \bar I_r\circ \left[ U^\top \eta V \Sigma + \Sigma V^\top \eta^\top U \right] = \Omega_U\Sigma^2 - \Sigma^2\Omega_U,
    \end{equation*}
    which decouples coefficient-wise. At coefficient $(ij)$, with $i\neq j$,
    \begin{equation*}
        \left[ U^\top \eta V \Sigma + \Sigma V^\top \eta^\top U \right]_{ij} = [\Omega_U]_{ij}\left(\Sigma^2_{jj} - \Sigma^2_{ii}\right),
    \end{equation*}
    hence $\Omega_U = F \circ \left[ U^\top \eta V \Sigma + \Sigma V^\top \eta^\top U \right]$, where $F\in\RR^{m-r\times r}$ has zeros on the diagonal and for $i\neq j$, $F_{ij} = 1/(\Sigma^2_{jj} - \Sigma^2_{ii})$ if $\Sigma^2_{jj} \neq \Sigma^2_{ii}$, $0$ otherwise.
    Besides, left-multiplying \eqref{eq:svddiffDerGlobalCurve} by $U_\perp^\top$ yields $U_\perp^\top \eta = U^\top_\perp U'\Sigma V^\top$, which rewrites, using the decomposition \eqref{eq:svddiffUderDecomp} of $U'$, as $U_\perp^\top \eta = B_U \Sigma V^\top$. Hence $B_U = U_\perp^\top \eta V \Sigma^{-1}$ and we get the complete expression for $U'$ by assembling the expressions of $\Omega_U$ and $B_U$ with the decomposition \eqref{eq:svddiffUderDecomp}. The term $U_\perp^\top U_\perp$ is eliminated using that $U^\top U + U_\perp^\top U_\perp = I_m$.

    Let's follow the same steps to get expressions for $V'$. Adding $\Sigma\eqref{eq:svddiffOffDiag}$ and $\eqref{eq:svddiffOffDiag}^\top \Sigma$ yields
    \begin{equation*}
        \bar I_r\circ \left[ \Sigma U^\top \eta V + V^\top \eta^\top U \Sigma \right] = \Omega_V \Sigma^2 - \Sigma^2 \Omega_V,
    \end{equation*}
    from which we get $\Omega_V = F \circ \left[ \Sigma U^\top \eta V + V^\top \eta^\top U \Sigma\right]$. Besides, right-multiplying \eqref{eq:svddiffDerGlobalCurve} by $V_\perp$ yields $\eta V_\perp = U \Sigma V'^\top V_\perp$, which rewrites using the decomposition $V' = V\Omega_V + V_\perp B_V $ as $\eta V_\perp = U \Sigma B_V^\top$. Hence $B_V = V_\perp^\top \eta^\top U \Sigma^{-1}$, and we get the claimed formula by eliminating the $V_\perp$ terms with $V^\top V + V_\perp^\top V_\perp = I_n$.
    The simplified expressions are obtained using that $U^\top U=I_m$, $U^\top U_p=0$, $V^\top V=I_n$ and $V^\top V_p=0$.
\end{proof}

We are now ready to give the expression of the Riemannian gradient and Hessian of the nuclear norm.

\begin{proposition}\label{prop:nuclearnorm_rgradhess}
    The nuclear norm $g=\|\cdot\|_*$ restricted to $\M_r$ is $\C^2$ and admits a smooth second-order development of the form \eqref{eq:manifoldSecondOrderDevSecRetraction} near any point $x = U\Sigma V^\top\in\M_r$. Denoting $\eta=U M V^\top + U_p V^\top + U V_p^\top\in\tangentM_r$ a tangent vector, there holds:
    \begin{align*}
        \grad g(x) &= U V^\top\\
        \Hess g(x)[\eta] &= U \left[\tilde{F} \circ (M-M^\top) \right] V^\top + U_p \Sigma^{-1}V^\top + U\Sigma^{-1}V_p^T,
    \end{align*}
    where $\circ$ denotes the Hadamard product and $\tilde{F}\in\RR^{r\times r}$ is such that $\tilde{F}_{ij} = 1 / (\Sigma_{jj} + \Sigma_{ii})$ if $\Sigma_{jj} \neq \Sigma_{ii}$, and $\tilde{F}_{ij}=0$ otherwise.
\end{proposition}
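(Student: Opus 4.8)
The plan is to read off both Riemannian objects by differentiating $g$ along smooth curves on $\M_r$ and feeding in the explicit SVD derivatives of \cref{lem:diffsvd}. On $\M_r$ the nuclear norm coincides with $g(x)=\Tr\Sigma$, the sum of the (positive) singular values. Since \cref{lem:diffsvd} produces smooth curves $t\mapsto(U(t),\Sigma(t),V(t))$ realizing the decomposition along any smooth curve $\smoothcurve$ through $x$, the composition $t\mapsto g(\smoothcurve(t))=\Tr\Sigma(t)$ is twice differentiable; this is what yields the claimed $\C^2$ regularity and the second-order development \eqref{eq:manifoldSecondOrderDevSecRetraction}, so I would not dwell on global smoothness beyond invoking the lemma.

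For the gradient I would differentiate $g\circ\smoothcurve$ at $0$: with $\Sigma'=I_r\circ M$ from \cref{lem:diffsvd} one gets $\D g(x)[\eta]=\Tr\Sigma'(0)=\Tr(I_r\circ M)=\Tr M$. It then suffices to produce a tangent vector whose inner product with every $\eta=UMV^\top+U_pV^\top+UV_p^\top$ equals $\Tr M$. A direct computation using $U^\top U=I_r$, $V^\top V=I_r$, $U^\top U_p=0$ and $V^\top V_p=0$ gives $\langle UV^\top,\eta\rangle=\Tr(VU^\top\eta)=\Tr M$. Since $UV^\top$ is itself tangent (it is the case $M=I_r$, $U_p=V_p=0$), the defining property of the Riemannian gradient forces $\grad g(x)=UV^\top$.

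For the Hessian I would use the paper's definition $\Hess g(x)[\eta]=\tfrac{\D}{\mathrm dt}\grad g(\smoothcurve(t))\big|_{t=0}$, the covariant derivative of the gradient field (well-defined independently of the curve, so any convenient $\smoothcurve$ works). Because $\grad g(\smoothcurve(t))=U(t)V(t)^\top$ at every point, and the covariant derivative on a Euclidean submanifold is the ambient derivative projected onto the tangent space, this equals $\proj_x(U'V^\top+UV'^\top)$. Substituting the simplified $U'=U(F\circ[M\Sigma+\Sigma M^\top])+U_p\Sigma^{-1}$ and $V'=V(F\circ[\Sigma M+M^\top\Sigma])+V_p\Sigma^{-1}$ from \cref{lem:diffsvd}, I would check termwise that each summand already lies in $\tangentM_r$ (the blocks $U_p\Sigma^{-1}$ and $V_p\Sigma^{-1}$ stay orthogonal to $U$ and $V$, and any $U(\cdot)V^\top$ is tangent for an arbitrary $r\times r$ middle factor), so $\proj_x$ acts as the identity. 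Collapsing the two $F$-terms then uses that $F$ is antisymmetric ($F_{ji}=-F_{ij}$) together with $(A\circ B)^\top=A^\top\circ B^\top$, turning the $UMV^\top$ coefficient into $F\circ(M\Sigma+\Sigma M^\top-\Sigma M-M^\top\Sigma)$.

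The hard part will be this final coefficient-wise simplification. Entry $(i,j)$ of the bracket equals $(\Sigma_{jj}-\Sigma_{ii})(M_{ij}-M_{ji})$; multiplying by $F_{ij}=1/(\Sigma_{jj}^2-\Sigma_{ii}^2)=1/[(\Sigma_{jj}-\Sigma_{ii})(\Sigma_{jj}+\Sigma_{ii})]$ cancels the factor $\Sigma_{jj}-\Sigma_{ii}$ and leaves $(M_{ij}-M_{ji})/(\Sigma_{jj}+\Sigma_{ii})=[\tilde F\circ(M-M^\top)]_{ij}$, with the diagonal and the degenerate case $\Sigma_{ii}=\Sigma_{jj}$ handled automatically since both $F$ and $\tilde F$ vanish there. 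Assembling the three pieces gives exactly $\Hess g(x)[\eta]=U[\tilde F\circ(M-M^\top)]V^\top+U_p\Sigma^{-1}V^\top+U\Sigma^{-1}V_p^\top$. The two points demanding care are keeping the conventions for $F$ (squared singular values) and $\tilde F$ (their sum) straight through the cancellation, and confirming that the projection step is genuinely trivial rather than silently dropping a normal component.
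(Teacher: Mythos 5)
Your proposal is correct and follows essentially the same route as the paper's proof: compute $\grad g(x)=UV^\top$ by differentiating $t\mapsto\Tr\Sigma(t)$ via \cref{lem:diffsvd}, then obtain the Hessian as the projected ambient derivative of $t\mapsto U(t)V(t)^\top$ and simplify using the antisymmetry of $F$ and $(A\circ B)^\top=A^\top\circ B^\top$. You actually spell out two points the paper leaves implicit — the entry-wise cancellation $(\Sigma_{jj}-\Sigma_{ii})(M_{ij}-M_{ji})F_{ij}=[\tilde F\circ(M-M^\top)]_{ij}$ and the fact that $\proj_x$ acts as the identity because each summand is already tangent — which is a welcome addition, not a deviation.
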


\begin{proof}
    Let $\smoothcurve : I\to\M_r$ denote a smooth curve over $\M_r$ such that $\gamma(0)=x$ and $\gamma'(0)=\eta$, and consider $\varphi = \|\smoothcurve(\cdot)\|_* : I\to\RR$. Writing the decomposition $\smoothcurve(t) = U(t)\Sigma(t) V(t)^\top$, for $U(t)$, $\Sigma(t)$, $V(t)$ smooth curves of $St(m, r)$, $\RR^{r\times r}$, $St(n, r)$ allows to write $\varphi(t) = \Tr(\Sigma(t))$. Applying \Cref{lem:diffsvd} yields
    \begin{equation*}
        \varphi'(0) = \Tr(\Sigma'(0)) = \Tr(U^\top \eta V) = \Tr(\eta  V U^\top) = \langle \eta, UV^\top \rangle,
    \end{equation*}
    so that $\grad g(x) = UV^\top \in \tangentM[X]$.

    In order to obtain the Riemannian Hessian, let $\bar Z : I\to\RR^n$ denote a smooth extension of $\grad g(\smoothcurve(\cdot))$, defined by $\bar Z(t) = U(t)V(t)^\top$. The Riemannian Hessian is then obtained as $\Hess g(x)[\eta] = \proj_x \bar Z'(0)$. The derivative of $\bar Z$ at $0$ is simply $\bar Z'(0) = U' V^\top + U V'^\top$ and thus writes, applying  \cref{lem:diffsvd}
    \begin{equation*}
        \bar Z'(0) = U \left( F \circ \left[ M \Sigma + \Sigma M^\top  \right]\right) V^\top + U_p \Sigma^{-1} V^\top +
        U \left( F \circ \left[ \Sigma M + M^\top \Sigma  \right]\right)^\top V^\top + U \Sigma^{-1} V_p^\top
    \end{equation*}
    This expression simplifies to the statement by using the fact that $F$ is antisymmetric and applying the identity $(A\circ B)^\top = A^\top \circ B^\top$.
\end{proof}

\subsection{Additional numerical experiment}
\label{subsec:additionalnumexps}

\begin{wrapfigure}{r}{0.5\textwidth}
    \centering
    \includegraphics[width=.48\textwidth]{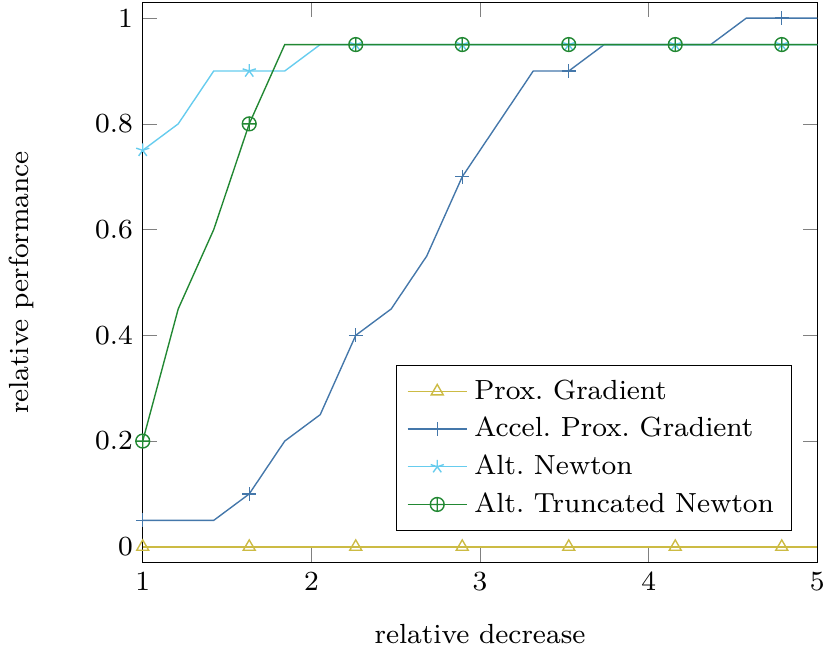}
    \caption{\revise{Performance profile for the time to decrease suboptimality below $10^{-9}$}\label{fig:tracenorm-perfprofile}}
\end{wrapfigure}

\revise{We illustrate in this appendix the robustness of the Newton acceleration on several instances of the same problem. More precisely, in the set-up of Section\;\ref{sec:trace}, we compare the 4 algorithms on 20 random instances of the tracenorm problem, in terms of wallclock time required to reach a suboptimality of $10^{-9}$. We then provide in \cref{fig:tracenorm-perfprofile} a performance profile (i.e. the ordinate of a curve at absciss $t \ge 1$ indicates the proportion of problems for which the corresponding algorithm was able to satisfy the criterion within $t$ times the best algorithm time for each problem; see\;\cite{dolan2002benchmarking}). 

We observe the following on\;\cref{fig:tracenorm-perfprofile}.
The ordinate at origin of a curve gives the proportion of problems for which the corresponding algorithm performed best: methods with Newton acceleration are the most efficient in $95\% (= 75\%+20\%)$ of the instances. Furthermore, they require about $2.5\times$\;less time to converge in half of the instances. Note also that the proximal gradient is completely outperformed by the others algorithms since it takes $5\times$\;more time than the best algorithm, for all instances. 

}

\end{document}